\newcommand{\eqnum}{\leavevmode\hfill\refstepcounter{equation}\textup{\tagform@{\theequation}}}
\newtheorem{ex}{Example}
\newtheorem{thm}{Theorem}
\newtheorem{lem}{Lemma}
\newtheorem{con}{Conjecture}
\newcommand{\T}{\mathbb{P}}
\newcommand{\E}{\mathbb{E}}
\newcommand{\vphi}{\varphi}
\newcommand\dist{\buildrel d \over =}
\journal{arXiv}
\newcommand\al{\alpha}
\newcommand\la{\lambda}
\newcommand\ka{\kappa}
\def\leq{\leqslant}
\def\geq{\geqslant}
\newcommand\N{{\mathbb N}}
\newcommand{\f}{\varphi}
\begin{document}
\begin{frontmatter}

%\title{N-seasonal discrete time risk model with generalized premium rate}
\title{Distribution of shifted discrete random walk generated by distinct random variables and applications in ruin theory
%\tnoteref{mytitlenote}
}
%\tnotetext[mytitlenote]{}

%% Group authors per affiliation:
\author{Simonas Gervė\fnref{myfootnote1}}
\author{Andrius Grigutis\fnref{myfootnote2}}
\address{Institute of Mathematics, Vilnius University, Naugarduko 24, LT-03225 Vilnius}

\fntext[myfootnote1]{Email: simonas.gerve@gmail.com}
\fntext[myfootnote2]{Email: andrius.grigutis@mif.vu.lt}

\begin{abstract}
In this paper, we set up the distribution function 
$$
\vphi(u)=\T\left(\sup_{n\geq1}\sum_{i=1}^{n}\left(X_i-\ka\right)<u\right),
$$
and the generating function of $\vphi(u+1)$, where $u\in\N_0$, $\ka\in\mathbb{N}$, the random walk $\left\{\sum_{i=1}^{n}X_i, n\in\mathbb{N}\right\},$ consists of $N\in\N$ periodically occurring distributions, and the integer-valued and non-negative random variables $X_1,\,X_2,\,\ldots$ are independent. This research generalizes two recent works where $\{\ka=1,\,N\in\N\}$ and $\{\ka\in\N,\,N=1\}$ were considered respectively. 
The provided sequence of sums $\left\{\sum_{i=1}^{n}\left(X_i-\ka\right),\,n\in\N\right\}$ generates so-called multi-seasonal discrete-time risk model with arbitrary natural premium and its known distribution enables to calculate the ultimate time ruin probability $1-\vphi(u)$ or survival probability $\vphi(u)$. Verifying obtained theoretical statements we demonstrate several computational examples for survival probability $\vphi(u)$ and its generating function when $\{\ka=2,\,N=2\}$, $\{\ka=3,\,N=2\}$, $\{\ka=5,\,N=10\}$ and $X_i$ admits Poisson and some other distributions. We also conjecture the non-singularity of certain matrices.

\end{abstract}

\begin{keyword}
multi-seasonal discrete-time risk model, survival probability, random walk, initial values, generating functions, ruin theory.
\MSC[2020] 91G05, 60G50, 60J80.
\end{keyword}

\end{frontmatter}

\section{Introduction}

Many probabilistic models, estimating likelihoods of certain events, are based on the sequence of sums $\left\{\sum_{i=1}^{n}X_i,\,n\in\mathbb{N}\right\}$, where $X_i$ are some random variables. This sequence is called {\it the random walk} (r. w.). Random walks are usually visualized as branching trees or certain paths in plane or space; their occurrence spreads from pure mathematics to many applied sciences. For instance, one may refer to Case–Shiller home pricing index \cite{Case-Shiller} or even more generally to the random walk hypothesis \cite{RW_H}. From a pure mathematics standpoint, one may mention the random matrix theory, see for example \cite{ARGUIN2022174}, \cite{edelman_rao_2005}, \cite{martinsson_tropp_2020} and other related works. Perhaps the closest context where the need to know the distribution of $\sup_{n\geq1}\sum_{i=1}^{n}\left(X_i-\ka\right)$ arises is insurance mathematics. In ruin theory one may assume that insurer's wealth $W(n)$ in discrete time moments $n\in\mathbb{N}$ consists of incoming cash premiums and outgoing payoffs (claims), and $W(n)$ admits the following representation:
\begin{align}\label{model}
W(n)=u+\kappa n-\sum_{i=1}^{n}X_i=u-\sum_{i=1}^{n}\left(X_i-\ka\right),
\end{align}
where $u\in\mathbb{N}_0:=\mathbb{N}\cup\{0\}$ is interpreted as initial surplus $W(0):=u$, $\kappa\in\mathbb{N}$ denotes the arrival rate of premiums paid by customers and the subtracted sum of random variables represents claims. Here we assume that random variables $X_i$ are independent, non-negative and integer-valued but not necessarily identically distributed. More precisely, $X_i\dist X_{i+N}$ for all $i\in\mathbb{N}$ and some fixed $N\in\mathbb{N}$. The model \eqref{model} can be visualized as a ''race'' between deterministic line $u+\ka n$ and the sum of random variables $\sum_{i=1}^{n}X_i$ when $n$ varies, see Figure \ref{fig:race}\footnote{Implemented using the RandomChoice function in Wolfram Mathematica \cite{Mathematica}}.

\begin{figure}[H]
\begin{center}
\includegraphics[scale=1.1]{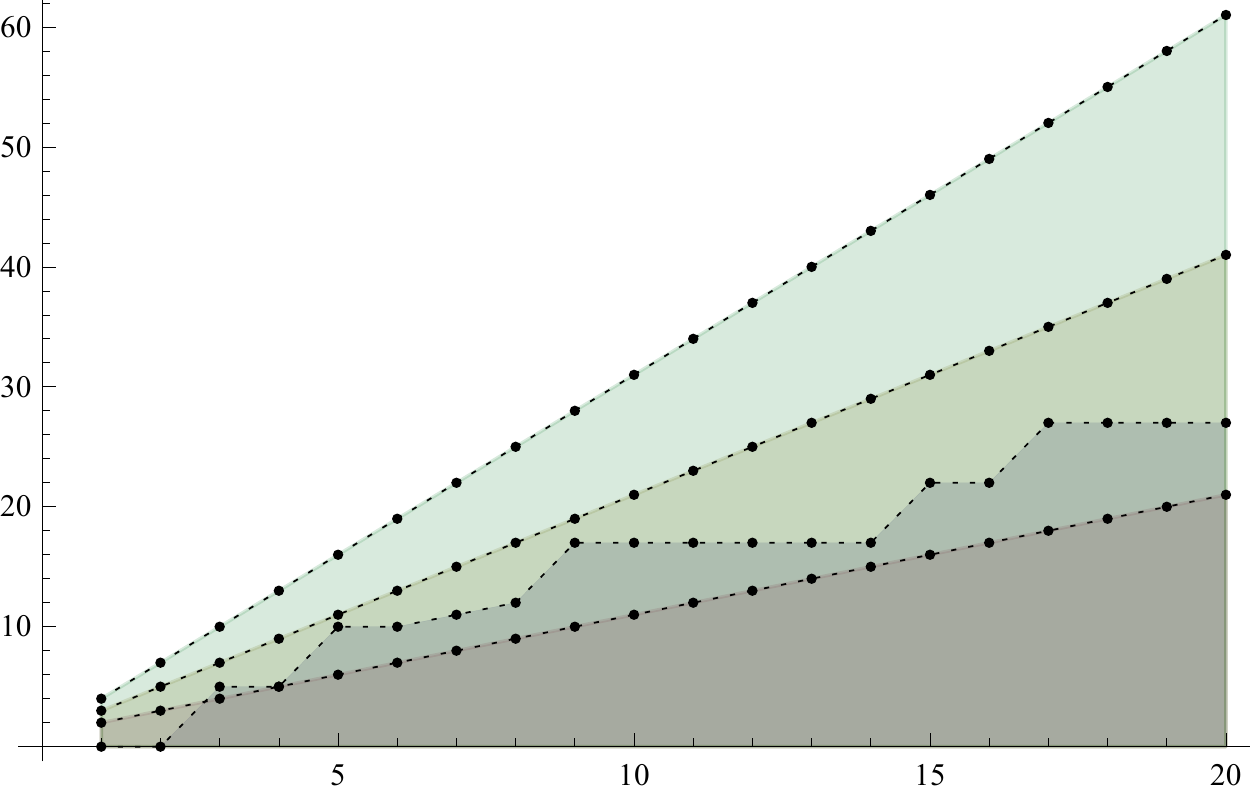}
\end{center}
\caption{Lines $1+n$, $1+2n$, $1+3n$, and random walk $\sum_{i=1}^{n}X_i\mathbbm{1}_{\{i\mod 2=1\}}+\sum_{i=1}^{n}Y_i\mathbbm{1}_{\{i\mod 2=0\}}$, where $\mathbb{P}(X_i=0)=0.3$, $\mathbb{P}(X_i=1)=0.1$, $\mathbb{P}(X_i=5)=0.6$ and $\mathbb{P}(Y_i=0)=0.8$, $\mathbb{P}(Y_i=1)=0.1$, $\mathbb{P}(Y_i=10)=0.1$, and $n$ varies from $1$ to $20$.}\label{fig:race}
\end{figure}

We say that the fixed natural number $N$ is the number of {\it periods} or {\it seasons} and call the model \eqref{model} {\it N-seasonal discrete-time risk model}.
The model \eqref{model} with $N=1$ is a discrete version of the more general continuous time Cramér – Lundberg model (also known as classical risk process)
\begin{align}\label{clas_mod}
\tilde{W}(t)=x+ct-\sum_{i=1}^{P_t}{\xi_i},\,t\geqslant0,
\end{align}
where, analogously as in model \eqref{model}, $x\geqslant0$ represents initial surplus, $c>0$ premium, $\xi_i$ independent and identically distributed non-negative random variables, and $P_t$ is the counting Poisson process with intensity $\lambda>0$. The model \eqref{clas_mod} can be further extended, cf. E. Spare Andersen's model \cite{Andersen}. 

Being curious whether initial surplus and collected premiums can always cover incurred claims, for the N-seasonal discrete-time risk model \eqref{model} we define {\it the finite time survival probability}
\begin{align}\label{fin_time_p}
\vphi(u,\,T)=\T\left(\bigcap_{n=1}^{T}\left\{W(n)>0\right\}\right)=\T\left(\sup_{1\leq n\leq T}\sum_{i=1}^{n}\left(X_i-\ka\right)<u\right),
\end{align}
where $T$ is some fixed natural number, and {\it the ultimate time survival probability}
\begin{align}\label{ult_time_p}
\vphi(u)=\T\left(\bigcap_{n=1}^{\infty}\left\{W(n)>0\right\}\right)=\T\left(\sup_{n\geq1}\sum_{i=1}^{n}\left(X_i-\ka\right)<u\right).
\end{align}
Calculation of finite time survival (or ruin, $\psi(u,\,T):=1-\vphi(u,\,T))$ probability \eqref{fin_time_p} is far easier than the calculation of ultimate time survival (or ruin, $\psi(u)=1-\vphi(u)$) probability \eqref{ult_time_p}, see Theorem \ref{thm:fin_t} below for $\vphi(u,\,T)$ expressions. Difficulties calculating $\vphi(u)$ arise due to $\vphi(\ka N),\, \vphi(\ka N+1),\,\ldots$ being expressible via $\vphi(0),\,\ldots,\,\vphi(\ka N-1)$ which are initially unknowns, see the formula \eqref{eq:rec} in the next section. Therefore, the essence of the problem we solve is nothing but finding these initial values $\vphi(0),\,\ldots,\,\vphi(\ka N-1)$. In this work, we demonstrate that the required values of $\vphi(0),\,\ldots,\,\vphi(\ka N-1)$ satisfy a certain system of linear equations (see the system \eqref{main_syst}), coefficients of which are based on certain polynomials and the roots of $s^{\ka N}=G_{S_N}(s)$, where $s\in\mathbb{C}$ and $G_{S_N}(s)$ is the probability-generating function of $S_N=X_1+\ldots+X_N$. 

Let us briefly overview the history and some classical works on the subject and mention a few recent papers. The foundation of ruin theory dates back to 1903 when Swedish actuary Filip Lundberg published his work \cite{Lundberg}, which was republished in 1930 also by Swedish mathematician Harald Cramér, while the random walk formulation as such was first introduced by English mathematician and biostatistician Karl Pearson \cite{Pearson}. Cramér-Lundberg riks model \eqref{clas_mod} was extended by Danish mathematician Erik Sparre Andersen by allowing claim inter-arrival times to have arbitrary distribution \cite{Andersen}. The next famous works were published in the late eighties by Hans U. Gerber and Elias S. W. Shiu, see \cite{Gerber}, \cite{Gerber1}, \cite{Shiu}, \cite{Shiu1}. Equally, in the second half of the twentieth century, there were many sound studies regarding the random walk by such authors as William Feller, Frank L. Spitzer, David G. Kendal, Félix Pollaczek and others, see \cite{Feller}, \cite{Spitzer}, \cite{Spitzer1}, \cite{Kendal}, \cite{Pollaczek} and related works. Scrolling across the timeline in recent decades, one may reference a notable survey \cite{Li}. Various assumptions on random walk's structure in models \eqref{model} or \eqref{clas_mod} make recent literature voluminous. With that said, let us mention \cite{dembo}, \cite{rvz}, \cite{asmussen},\cite{Dickson}, \cite{Scandin}, \cite{Picard}, \cite{Li1}, \cite{Lef}, \cite{Yang}, \cite{Cang}.

\section{Recursive nature of ultimate time survival probability, basic notations and the net profit condition}

This section starts with deriving the basic recurrent relation for the ultimate time survival probability. The definition \eqref{ult_time_p}, the law of total probability and rearrangements imply
\begin{align}\label{eq:rec}\nonumber
\varphi(u)&=\mathbb{P}\left(\bigcap_{n=1}^{\infty}\left\{W_u(n)>0\right\}\right)\\
&=\mathbb{P}\left(\bigcap_{n=1}^{N}\left\{u+\kappa n - \sum_{i=1}^n X_i>0\right\} 
\cap
\bigcap_{n=N+1}^{\infty}\left\{u+\kappa n - \sum_{i=1}^n X_i>0\right\}\right)\nonumber\\
&=\mathbb{P}\left(\bigcap_{n=1}^{N}\left\{\sum_{i=1}^nX_i\leqslant u+\kappa n-1\right\}\cap
\bigcap_{n=N+1}^{\infty}\left\{u+\kappa N-\sum_{i=1}^NX_i+\kappa(n- N)-\sum_{i=N+1}^nX_i>0\right\}\right)\nonumber\\
&=\sum_{\substack{i_1\leqslant u+\kappa-1\\i_1+i_2\leqslant u+2\kappa-1\\
\vdots \vspace{1mm} \\
i_1+i_2+\ldots+i_N\leqslant u+\kappa N-1}}\hspace{-5mm}\mathbb{P}(X_1=i_1)\mathbb{P}(X_2=i_2)\cdots\mathbb{P}(X_N=i_N)\,
\varphi\left(u+\kappa N-\sum_{j=1}^Ni_j\right).
\end{align}
Substituting $u=0$ into the recursive formula \eqref{eq:rec}, we notice that in order to find $\varphi(\ka N)$ we need to know the values of $\varphi(0),\, \varphi(1),\, \ldots,\, \varphi(\kappa N-1)$. Moreover, if we know the values of $\varphi(0),\, \varphi(1),\, \ldots,\, \varphi(\kappa N-1)$, the same recurrence \eqref{eq:rec} allows us to calculate $\f(u)$ for any $u\geqslant\ka N$ by substituting $u=0,\, 1, \ldots$ there. Thus, as mentioned in the introduction, we only need a way to calculate these initial values.

We now define a series of notations. Recalling that we aim to know the distribution of $\sup_{n\geq1}\sum_{i=1}^{n}\left(X_i-\ka\right)$, we define $N$ random variables:
\begin{align}\label{rvs_M}
\mathcal{M}_1:=\sup_{n\geqslant 1}\left(\sum_{i=1}^n(X_i-\ka)\right)^+,\,
\mathcal{M}_2:=\sup_{n\geqslant 2}\left(\sum_{i=2}^n(X_i-\ka)\right)^+,\, \ldots\,,\,
\mathcal{M}_N:=\sup_{n\geqslant N}\left(\sum_{i=N}^n(X_i-\ka)\right)^+\hspace{-0.2cm},
\end{align}
where $x^+=\max\{0,\,x\}$, $x\in\mathbb{R}$ is the positive part function. Obviously, same as $X_1,\,X_2,\,\ldots,\,X_N$, every random variable $\mathcal{M}_1,\,\mathcal{M}_2,\,\ldots,\,\mathcal{M}_N$ attains the values from the set \{0,\,1,\,\dots\}.

Let us denote the probability mass functions of $\mathcal{M}_j$, their generators $X_j$ and the sum $S_N=X_1+X_2+\ldots+X_N$:
\begin{equation}\label{0+}
m_i^{(j)}:=\mathbb{P}\left(\mathcal{M}_j=i\right),\ \  x_i^{(j)}:=\mathbb{P}\left(X_j=i\right),\ \ s_i^{(N)}:=\mathbb{P}\left(S_N=i\right),
\end{equation}
where $j\in\{1,\,2,\,\ldots,\,N\}$ and $i\in\mathbb{N}_0$. Let $F_{X_j}(i)$ be the distribution function of the random variable $X_j$, i. e.
$$
F_{X_j}(u)=\mathbb{P}(X_j\leqslant u)=\sum_{i=0}^{u}x^{(j)}_i,\,j\in\{1,\,2,\,\ldots,\,N\},\, u\in\mathbb{N}_0.
$$
Also, for the complex number $s\in\mathbb{C}$ let us denote the probability-generating function of some non-negative and integer-valued random variable $X$
$$
G_X(s)=\sum_{i=0}^{\infty}s^i\T(X=i)=\E s^X,\,|s|\leqslant1.
$$

The definition of the survival probability \eqref{ult_time_p} and the definition of random variable $\mathcal{M}_1$ imply
    \begin{equation}\label{eq:phi(n+1)}
        \varphi(u+1)=\mathbb{P}(\mathcal{M}_1\leqslant u)= \sum_{i=0}^{u}m^{(1)}_i\textit{ for all } u\in\mathbb{N}_0.
    \end{equation}
Thus, the survival probability calculation turns into the setup of the distribution function of $\mathcal{M}_1$. It is simple to explain the core idea of the paper, i. e. how the probabilities $m_i^{(1)}$, $i\in\mathbb{N}_0$ are attained. Let us refer to Feller's book \cite[Theorem on page 198]{Feller}. The referenced Theorem states that if $N=1$ in model \eqref{model}, i. e. the random walk $\{\sum_{i=1}^{n}X_i,\,n\in\mathbb{N}\}$ is generated by independent and identically distributed random variables, which are the copies of $X$, then $(\mathcal{M}_1+X-\kappa)^+\dist \mathcal{M}_1$. For arbitrary number of periods $N\in\mathbb{N}$ the mentioned distribution property is as follows: 
\begin{align}\label{dist_M}
(\mathcal{M}_1+X_N-\ka)^+\dist\mathcal{M}_N \text{ and } (\mathcal{M}_j+X_{j-1}-\ka)^+\dist \mathcal{M}_{j-1},
\text{ for all }j=2,\,3,\,\ldots,\,N,
\end{align}
see Lemma \ref{lem:dist_NxN} in Section \ref{sec:lemas} below. Metaphorically, the distributions' equalities in \eqref{dist_M} mean that the random variables $\mathcal{M}_1,\,\mathcal{M}_2,\,\ldots,\,\mathcal{M}_N$ ''can see each other'', and, more precisely, based on the equalities in \eqref{dist_M}, we can set up a system of corresponding equalities of probability-generating functions
\begin{align}\label{syst}
\begin{cases}
&\E s^{(\mathcal{M}_1+X_N-\ka)^+}=\E s^{\mathcal{M}_N}\\
&\E s^{(\mathcal{M}_2+X_1-\ka)^+}=\E s^{\mathcal{M}_1}\\
&\vdots\\
&\E s^{(\mathcal{M}_N+X_{N-1}-\ka)^+}=\E s^{\mathcal{M}_{N-1}}
\end{cases}.
\end{align}
The system \eqref{syst} contains the desired information on $m^{(1)}_i,\,i\in\mathbb{N}_0$. 

In general, the random variables $\mathcal{M}_1,\,\mathcal{M}_2,\,\ldots,\,\mathcal{M}_N$
can be extended, i. e. $\lim_{u\to\infty}\T(\mathcal{M}_j=u)>0$, $j=1,\,2,\,\ldots,\,N$. However, $\lim_{u\to\infty}\T(\mathcal{M}_j< u)=1$ for all $j=1,\,2,\,\ldots,\,N$ if $\E S_N<\ka N$, see Lemma \ref{lem:lim} in Section \ref{sec:lemas} below.

The condition $\E S_N<\ka N$ is called {\it the net profit condition} and it is crucially important for the survival probability $\vphi(u)$. Intuitively, an insurer has no chance to survive in long-term activity, if threatening amounts of claims on average are greater or equal to the collected premiums. This can also be well illustrated by the expected value of $W(n)$ in \eqref{model}. For instance,
$$
\E W(nN)=u+n(-\E S_N+\ka N)<0
$$
if $\E S_N>\ka N$ and $n$ is sufficiently large. Consequently, the negative value of $W(n)$ is unavoidable. See Theorem \ref{thm:2} in Section \ref{main} below for precise statements on the survival probability $\vphi(u),\,u\in\mathbb{N}_0$ when the net profit condition is violated, i. e. $\E S_N\geqslant\ka N$.

\section{Main results}\label{main}

In this section, based on the previously introduced notations and explanation that our goal is to know the probability mass function of $\mathcal{M}_1$, we formulate two main theorems for ultimate time survival probability calculation under the net profit condition. Theorem \ref{thm:1}, implied by system \eqref{syst}, provides the relations between $m^{(1)}_i,\,m^{(2)}_i,\,\ldots,\,m^{(N)}_i$ and $x^{(1)}_i,\,x^{(2)}_i,\,\ldots,\,x^{(N)}_i$ for all $i\in\mathbb{N}_0$ and lays down the foundation for ultimate time survival probability $\vphi(u+1)=\sum_{i=0}^{u}m^{(1)}_i,\,u\in\mathbb{N}_0$ calculation.

\begin{thm}\label{thm:1}
Suppose that the N-seasonal discrete-time risk model \eqref{model} is generated by random variables $X_1,\, X_2,\, \ldots,\, X_N$ and the net profit condition $\E S_N<\ka N$ is satisfied. Then the following statements are correct:
\begin{enumerate}
\item
The probability mass functions of random variables $\mathcal{M}_1,\,\mathcal{M}_2,\, \ldots,\, \mathcal{M}_N$ and $X_1,\,X_2,\, \ldots,\, X_N$ are related by
\begin{align}\label{eq:thm_eq_mean}
\nonumber
&\hspace{-1cm}\sum_{i=0}^{\kappa-1}m_i^{(1)}\sum_{j=0}^{\kappa-1-i}x^{(N)}_j(\kappa-i-j)+\sum_{i=0}^{\kappa-1}m_i^{(2)}\sum_{j=0}^{\kappa-1-i}x^{(1)}_j(\kappa-i-j)
+\sum_{i=0}^{\kappa-1}m_i^{(3)}\sum_{j=0}^{\kappa-1-i}x^{(2)}_j(\kappa-i-j)+\ldots\\
&\hspace{-1cm}+\sum_{i=0}^{\kappa-1}m_i^{(N)}\sum_{j=0}^{\kappa-1-i}x^{(N-1)}_j(\kappa-i-j)
=\kappa N -\mathbb{E}S_N.
\end{align}
\item
If $0<|s|\leqslant1$, then
\begin{align}\label{eq:thm_zero_s}
\nonumber
&\hspace{-1cm}\sum_{i=0}^{\kappa-1}m_i^{(1)}\sum_{j=0}^{\kappa-1-i}x^{(N)}_j(s^\kappa-s^{i+j})
+\frac{G_{X_N}(s)}{s^\kappa}\sum_{i=0}^{\kappa-1}m_i^{(2)}\sum_{j=0}^{\kappa-1-i}x^{(1)}_j(s^\kappa-s^{i+j})\\ \nonumber
&\hspace{-1cm}+\frac{G_{X_N+X_1}(s)}{s^{2\kappa}}
\sum_{i=0}^{\kappa-1}m_i^{(3)}\sum_{j=0}^{\kappa-1-i}x^{(2)}_j(s^\kappa-s^{i+j})+\ldots\\
&\hspace{-1cm}+\frac{G_{X_N+X_1+\ldots+X_{N-2}}(s)}{s^{\kappa(N-1)}}
\sum_{i=0}^{\kappa-1}m_i^{(N)}\sum_{j=0}^{\kappa-1-i}x^{(N-1)}_j(s^\kappa-s^{i+j})
=s^\ka G_{M_N}(s)\left(1-\frac{G_{S_N}(s)}{s^{\ka N}}\right)
\end{align}
and, if $\alpha\neq1$, $0<|\alpha|\leqslant1$, is a root of $G_{S_N}(s)=s^{\kappa N}$, then
\begin{align}\label{eq:thm_zero}
\nonumber
&\hspace{-1cm}\sum_{i=0}^{\kappa-1}m_i^{(1)}\sum_{j=i}^{\kappa-1}\al^j F_{X_N}(j-i)
+\frac{G_{X_N}(\alpha)}{\alpha^\kappa}\sum_{i=0}^{\kappa-1}m_i^{(2)}\sum_{j=i}^{\kappa-1}\al^j F_{X_1}(j-i)\\ \nonumber
&\hspace{-1cm}+\frac{G_{X_N+X_1}(\alpha)}{\alpha^{2\kappa}}
\sum_{i=0}^{\kappa-1}m_i^{(3)}\sum_{j=i}^{\kappa-1}\al^j F_{X_2}(j-i)+\ldots\\
&\hspace{-1cm}+\frac{G_{X_N+X_1+\ldots+X_{N-2}}(\alpha)}{\alpha^{\kappa(N-1)}}
\sum_{i=0}^{\kappa-1}m_i^{(N)}\sum_{j=i}^{\kappa-1}\al^j F_{X_{N-1}}(j-i)
=0.
\end{align}

\item
If $\alpha\neq1$, $0<|\alpha|\leqslant1$, is a root of $G_{S_N}(s)=s^{\kappa N}$ of multiplicity $r$, $r=2,\,3,\,...,\,\kappa N-1$, then
\begin{align}\label{eq:thm_derivative}
\nonumber
&\hspace{-1cm}\sum_{i=0}^{\kappa-1}m_i^{(1)}\sum_{j=i}^{\kappa-1}\frac{d^n}{ds^n}\left(s^j\right)\Big{|}_{s=\alpha}F_{X_N}(j-i)+
\sum_{i=0}^{\kappa-1}m_i^{(2)}\sum_{j=i}^{\kappa-1}\frac{d^n}{ds^n}\left(G_{X_N}(s)s^{j-\kappa}\right)\Big{|}_{s=\alpha} F_{X_1}(j-i)\\ \nonumber
&\hspace{-1cm}+
\sum_{i=0}^{\kappa-1}m_i^{(3)}\sum_{j=i}^{\kappa-1}\frac{d^n}{ds^n}\left(G_{X_N+X_1}(s)s^{j-2\ka}\right)\Big{|}_{s=\alpha} F_{X_2}(j-i)+\ldots\\
&\hspace{-1cm}+\sum_{i=0}^{\kappa-1}m_i^{(N)}\sum_{j=i}^{\kappa-1}\frac{d^n}{ds^n}\left(G_{X_N+X_1+\ldots+X_{N-2}}(s)s^{j-\kappa(N-1)}\right)\Big{|}_{s=\alpha} F_{X_{N-1}}(j-i)
=0,
\end{align}
for all $n\in\{1,\,2,\,\ldots,\,r-1\}$, where $\frac{d^n}{dt^n}(\ldots)|_{t=\alpha}$ denotes the $n'th$ derivative at $s=\alpha$.
\item
For $n=\kappa,\,\kappa+1,\,\ldots\,$, the probability mass functions of random variables $\mathcal{M}_1,\,\mathcal{M}_2,\, \ldots,\, \mathcal{M}_N$ and $X_1,\,X_2,\, \ldots,\, X_N$ satisfy the following system of equations
\begin{align}\label{eq:thm_system}
\begin{cases}
m^{(1)}_nx^{(N)}_0&=m^{(N)}_{n-\kappa}-\sum_{i=0}^{n-1}m^{(1)}_ix^{(N)}_{n-i}-\sum_{i=0}^{\kappa-1}m_i^{(1)}\sum_{j=0}^{\kappa-1-i}x^{(N)}_j\mathbbm{1}_{\{n=\kappa\}}\\
m^{(2)}_nx^{(1)}_0&=m^{(1)}_{n-\kappa}-\sum_{i=0}^{n-1}m^{(2)}_ix^{(1)}_{n-i}-\sum_{i=0}^{\kappa-1}m_i^{(2)}\sum_{j=0}^{\kappa-1-i}x^{(1)}_j\mathbbm{1}_{\{n=\kappa\}}\\
m^{(3)}_nx^{(2)}_0&=m^{(2)}_{n-\kappa}-\sum_{i=0}^{n-1}m^{(3)}_ix^{(2)}_{n-i}-\sum_{i=0}^{\kappa-1}m_i^{(3)}\sum_{j=0}^{\kappa-1-i}x^{(2)}_j\mathbbm{1}_{\{n=\kappa\}}\\
&\,\,\vdots\\
m^{(N)}_nx^{(N-1)}_0&=m^{(N-1)}_{n-\kappa}-\sum_{i=0}^{n-1}m^{(N)}_ix^{(N-1)}_{n-i}-\sum_{i=0}^{\kappa-1}m_i^{(N)}\sum_{j=0}^{\kappa-1-i}x^{(N-1)}_j\mathbbm{1}_{\{n=\kappa\}}\\
\end{cases}.
\end{align}
\end{enumerate}
\end{thm}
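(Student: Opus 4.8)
The whole theorem follows from one elementary generating-function identity, which I would establish first: if $\M$ and $X$ are independent, non-negative, integer-valued random variables and $\ka\in\N$, then splitting according to whether $\M+X\leq\ka$ and using $G_{\M+X}(s)=G_\M(s)G_X(s)$ gives, for $0<|s|\leq1$,
\begin{equation*}
s^\ka\,\E s^{(\M+X-\ka)^+}=G_\M(s)G_X(s)+\sum_{i=0}^{\ka-1}\T(\M=i)\sum_{j=0}^{\ka-1-i}\T(X=j)\left(s^\ka-s^{i+j}\right).
\end{equation*}
Applying this with $(\M,X)$ equal to $(\M_1,X_N),(\M_2,X_1),\ldots,(\M_N,X_{N-1})$ converts the system \eqref{syst} into the $N$ relations
\begin{equation*}
G_{\M_\ell}(s)G_{X_{\ell-1}}(s)+P_\ell(s)=s^\ka G_{\M_{\ell-1}}(s),\qquad \ell=1,\ldots,N,
\end{equation*}
where indices are read cyclically ($X_0:=X_N$, $\M_0:=\M_N$) and $P_\ell(s):=\sum_{i=0}^{\ka-1}m_i^{(\ell)}\sum_{j=0}^{\ka-1-i}x_j^{(\ell-1)}(s^\ka-s^{i+j})$.

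The heart of the argument is deriving \eqref{eq:thm_zero_s} from these relations. Using the $\ell=2,\ldots,N$ relations I would solve recursively for $G_{\M_1}$, each substitution $G_{\M_{\ell-1}}=s^{-\ka}\big(G_{X_{\ell-1}}G_{\M_\ell}+P_\ell\big)$ peeling off one term, to obtain
\begin{equation*}
G_{\M_1}(s)=\frac{G_{X_1}(s)\cdots G_{X_{N-1}}(s)}{s^{(N-1)\ka}}\,G_{\M_N}(s)+\sum_{\ell=2}^{N}\frac{G_{X_1}(s)\cdots G_{X_{\ell-2}}(s)}{s^{(\ell-1)\ka}}\,P_\ell(s),
\end{equation*}
and then insert this into the $\ell=1$ relation $G_{\M_1}G_{X_N}+P_1=s^\ka G_{\M_N}$. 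Since $G_{X_N}G_{X_1}\cdots G_{X_{N-1}}=G_{S_N}$ and $G_{X_N}G_{X_1}\cdots G_{X_{\ell-2}}=G_{X_N+X_1+\cdots+X_{\ell-2}}$, collecting the $G_{\M_N}$ terms on the right produces exactly \eqref{eq:thm_zero_s}. I expect the main obstacle here to be purely organisational: keeping the cyclic indices, the accumulating products of generating functions, and the powers of $s^\ka$ aligned through the recursion.

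The rest is harvested from \eqref{eq:thm_zero_s}. The telescoping identity $\sum_{j=0}^{\ka-1-i}x_j(s^\ka-s^{i+j})=(s-1)\sum_{j=i}^{\ka-1}s^jF_X(j-i)$ shows that every $P_\ell(s)$ carries a factor $(s-1)$, and $1-G_{S_N}(s)/s^{\ka N}$ has a simple zero at $s=1$ because $\E S_N<\ka N$; dividing \eqref{eq:thm_zero_s} by $(s-1)$ therefore leaves a clean identity. Evaluating it at $s=1$, where all $G_X(1)=1$ and $G_{\M_N}(1)=1$ (Lemma \ref{lem:lim}), and using $\lim_{s\to1}(s^{\ka N}-G_{S_N}(s))/(s-1)=\ka N-\E S_N$, yields \eqref{eq:thm_eq_mean}. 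Evaluating it at a root $\al\neq1$, $0<|\al|\leq1$, of $G_{S_N}(s)=s^{\ka N}$ makes the right-hand side vanish and gives \eqref{eq:thm_zero}; at such a root of multiplicity $r$, differentiating $n$ times for $1\leq n\leq r-1$ still annihilates the right-hand side (which is $s^\ka G_{\M_N}(s)$ times a function with a zero of order $r$ at $\al$) and gives \eqref{eq:thm_derivative}.

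Finally, \eqref{eq:thm_system} comes straight from the $N$ relations above by matching coefficients of $s^n$ for $n\geq\ka$: the convolution $G_{\M_\ell}G_{X_{\ell-1}}$ contributes $\sum_{i=0}^{n}m_i^{(\ell)}x_{n-i}^{(\ell-1)}$, the polynomial $P_\ell$ contributes only when $n=\ka$ (hence the indicator), and $s^\ka G_{\M_{\ell-1}}$ contributes $m_{n-\ka}^{(\ell-1)}$; isolating $m_n^{(\ell)}x_0^{(\ell-1)}$ is the claimed recursion. The one technical point I would be careful about is the differentiation step at roots $\al$ with $|\al|=1$: for $|\al|<1$ all functions involved are plainly analytic at $\al$, whereas on the unit circle I would justify it via the analyticity of the generating-function power series inside the disc together with a radial limiting argument.
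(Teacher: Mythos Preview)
Your proof is correct and follows essentially the same route as the paper: derive the $N$ generating-function relations of Lemma~\ref{lem:generating_f_NxN}, telescope them to obtain \eqref{eq:thm_zero_s}, then read off \eqref{eq:thm_zero}, \eqref{eq:thm_derivative} from the $(s-1)$-divided identity at roots $\alpha$, and extract \eqref{eq:thm_system} by matching coefficients.

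The one place where you diverge is the proof of \eqref{eq:thm_eq_mean}, and your argument is in fact cleaner than the paper's. The paper differentiates \eqref{eq:thm_zero_s} and lets $s\to1^-$, which produces a term $G'_{\M_N}(s)\bigl(s^\ka-G_{S_N}(s)s^{\ka(1-N)}\bigr)$; showing this vanishes requires splitting into the cases $\E\M_N<\infty$ and $\E\M_N=\infty$, the latter handled by L'Hospital together with an auxiliary lemma that $(G'_{\M_N}(s))^2/G''_{\M_N}(s)\to0$. Your route---divide \eqref{eq:thm_zero_s} by $(s-1)$ first and then let $s\to1^-$---only needs $G_{\M_N}(1^-)=1$ (which is Lemma~\ref{lem:lim}) and $\lim_{s\to1}(s^{\ka N}-G_{S_N}(s))/(s-1)=\ka N-\E S_N$ (which is just $\E S_N<\infty$), so the $\E\M_N=\infty$ case and the auxiliary lemma disappear entirely. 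Your caution about differentiation at unit-circle roots is well placed; the paper does not address it either.
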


Let us comment on how Theorem \ref{thm:1} is used to obtain the distribution of $\mathcal{M}_1$, i. e. $m^{(1)}_i$, $i\in\mathbb{N}_0$. First, we note that the equation $s^{\ka N}=G_{S_N}(s)$ has exactly $\ka N-1$ roots in $|s|\leqslant1,\,s\neq1$ counted with their multiplicities, see Lemma \ref{lem:roots} in Section \ref{sec:lemas} below. We denote these roots by $\al_1,\,\al_2,\,\ldots,\,\al_{\ka N-1}$. We then create the system of linear equations \eqref{main_syst} replicating the equation \eqref{eq:thm_zero} $\ka N-1$ times over the roots $\al_1,\,\al_2,\,\ldots,\,\al_{\ka N-1}$ and include \eqref{eq:thm_eq_mean} as the last equation. To illustrate that explicitly, we define the matrices $M_1,\,M_2,\,\dots,\,M_N$ and $G_2,\,G_3,\,\dots,\,G_N$:

\begin{align*}
\hspace{-1cm}
M_1:=
\begin{pmatrix}
\sum_{j=0}^{\kappa-1}\al_1^jF_{X_N}(j)&\sum_{j=1}^{\kappa-1}\al_1^jF_{X_N}(j-1)&\ldots&\sum_{j=\ka-2}^{\kappa-1}\al_1^jF_{X_N}(j-\ka-2)&x_0^{(N)}\al_1^{\kappa-1}\\
\vdots&\vdots&\ddots&\vdots&\vdots\\
\sum_{j=0}^{\kappa-1}\al_{\ka N-1}^jF_{X_N}(j)&\sum_{j=1}^{\kappa-1}\al_{\ka N-1}^jF_{X_N}(j-1)&\ldots&\sum_{j=\ka-2}^{\kappa-1}\al_{\ka N-1}^jF_{X_N}(j-\ka-2)&x_0^{(N)}\al_{\ka N-1}^{\kappa-1}\\
\sum_{j=0}^{\kappa-1}x^{(N)}_j(\kappa-j)&\sum_{j=0}^{\kappa-2}x^{(N)}_j(\kappa-j-1)&\ldots&\sum_{j=0}^{1}x^{(N)}_j(2-j)&x_0^{(N)}
\end{pmatrix}_{\kappa N\times \kappa}\hspace{-0.5cm},
\end{align*}
\begin{align*}
\hspace{-1cm}
M_2:=
\begin{pmatrix}
\sum_{j=0}^{\kappa-1}\al_1^jF_{X_1}(j)&\sum_{j=1}^{\kappa-1}\al_1^jF_{X_1}(j-1)&\ldots&\sum_{j=\ka-2}^{\kappa-1}\al_1^jF_{X_1}(j-\ka-2)&x_0^{(1)}\al_1^{\kappa-1}\\
\vdots&\vdots&\ddots&\vdots&\vdots\\
\sum_{j=0}^{\kappa-1}\al_{\ka N-1}^jF_{X_1}(j)&\sum_{j=1}^{\kappa-1}\al_{\ka N-1}^jF_{X_1}(j-1)&\ldots&\sum_{j=\ka-2}^{\kappa-1}\al_{\ka N-1}^jF_{X_1}(j-\ka-2)&x_0^{(1)}\al_{\ka N-1}^{\kappa-1}\\
\sum_{j=0}^{\kappa-1}x^{(1)}_j(\kappa-j)&\sum_{j=0}^{\kappa-2}x^{(1)}_j(\kappa-j-1)&\ldots&\sum_{j=0}^{1}x^{(1)}_j(2-j)&x_0^{(1)}
\end{pmatrix}_{\kappa N\times \kappa}\hspace{-0.5cm},
\end{align*}
$$
\vdots
$$
\begin{align*}
&\hspace{-1cm}
M_N:=\\
&\hspace{-1cm}
\begin{pmatrix}
\sum_{j=0}^{\kappa-1}\al_1^jF_{X_{N-1}}(j)&\sum_{j=1}^{\kappa-1}\al_1^jF_{X_{N-1}}(j-1)&\ldots&\sum_{j=\ka-2}^{\kappa-1}\al_1^jF_{X_{N-1}}(j-\ka-2)&x_0^{(N-1)}\al_1^{\kappa-1}\\
\vdots&\vdots&\ddots&\vdots&\vdots\\
\sum_{j=0}^{\kappa-1}\al_{\ka N-1}^jF_{X_{N-1}}(j)&\sum_{j=1}^{\kappa-1}\al_{\ka N-1}^jF_{X_{N-1}}(j-1)&\ldots&\sum_{j=\ka-2}^{\kappa-1}\al_{\ka N-1}^jF_{X_{N-1}}(j-\ka-2)&x_0^{(N-1)}\al_{\ka N-1}^{\kappa-1}\\
\sum_{j=0}^{\kappa-1}x^{(N-1)}_j(\kappa-j)&\sum_{j=0}^{\kappa-2}x^{(N-1)}_j(\kappa-j-1)&\ldots&\sum_{j=0}^{1}x^{(N-1)}_j(2-j)&x_0^{(N-1)}
\end{pmatrix}_{\kappa N\times \kappa}\hspace{-0.5cm},
\end{align*}
\begin{align*}
G_2:=
\begin{pmatrix}
\frac{G_{X_N}(\al_1)}{\al_1^\ka}&\ldots&\frac{G_{X_N}(\al_1)}{\al_{1}^\ka}\\
\vdots&\ddots&\vdots\\
\frac{G_{X_N}(\al_{\kappa N-1})}{\al_{\kappa N-1}^\ka}&\ldots&\frac{G_{X_N}(\al_{\kappa N-1})}{\al_{\kappa N-1}^\ka}\\
1&\ldots&1
\end{pmatrix}_{\kappa N\times \kappa}\hspace{-0.5cm},  \quad 
G_3:=
\begin{pmatrix}
\frac{G_{X_N+X_1}(\al_1)}{\al_1^{2\ka}}&\ldots&\frac{G_{X_N+X_1}(\al_1)}{\al_1^{2\ka}}\\
\vdots&\vdots&\vdots\\
\frac{G_{X_N+X_1}(\al_{\kappa N-1})}{\al_{\kappa N-1}^{2\ka}}&\ldots&\frac{G_{X_N+X_1}(\al_{\kappa N-1})}{\al_{\kappa N-1}^{2\ka}}\\
1&\ldots&1
\end{pmatrix}_{\kappa N\times \kappa}\hspace{-0.5cm},\,\ldots, 
\end{align*}

\begin{align*}
G_N:=
\begin{pmatrix}
\frac{G_{X_N+X_1+\ldots+X_{N-2}}(\al_1)}{\al_1^{\ka(N-1)}}&\ldots&\frac{G_{X_N+X_1+\ldots+X_{N-2}}(\al_1)}{\al_1^{\ka(N-1)}}\\
\vdots&\ddots&\vdots\\
\frac{G_{X_N+X_1+\ldots+X_{N-2}}(\al_{\kappa N-1})}{\al_{\kappa N-1}^{\ka(N-1)}}&\ldots&\frac{G_{X_N+X_1+\ldots+X_{N-2}}(\al_{\kappa N-1})}{\al_{\kappa N-1}^{\ka(N-1)}}\\
1&\ldots&1
\end{pmatrix}_{\kappa N\times \kappa}\hspace{-0.5cm},
\end{align*}
and set up the system
\begin{align}\label{main_syst}
\begin{pmatrix}
M_1&M_2\circ G_2&\ldots&M_N \circ G_N
\end{pmatrix}_{\kappa N\times \kappa N}
\times
\begin{pmatrix}
m^{(1)}_0\\
m^{(1)}_1\\
\vdots\\
m^{(1)}_{\kappa-1}\\
m^{(2)}_0\\
m^{(2)}_1\\
\vdots\\
m^{(2)}_{\kappa-1}\\
\vdots\\
m^{(N)}_0\\
m^{(N)}_1\\
\vdots\\
m^{(N)}_{\kappa-1}
\end{pmatrix}_{\kappa N\times1}
=
\begin{pmatrix}
0\\
\vdots\\
0\\
\kappa N-\mathbb{E}S_N
\end{pmatrix}_{\kappa N\times1}\hspace{-0.5cm},
\end{align}
where $\circ$ denotes the Hadamard matrix product, also known as the element-wise product, entry-wise product or Schur product, i. e. two elements in corresponding positions in two matrices are multiplied, see for example \cite{horn_johnson}.

Clearly, the solution of \eqref{main_syst} (see Section \ref{notes} below on solvability and modifications of \eqref{main_syst}) gives $m^{(1)}_0,\,m^{(1)}_1,\,\ldots,\,m^{(1)}_{\ka-1}$, while using the system \eqref{eq:thm_system} (note that we can have $m^{(j)}_0,\,m^{(j)}_1,\,\ldots,\,m^{(j)}_{\ka-1}$, $j\in\{2,\,3,\,\ldots,\,N\}$ from \eqref{main_syst} too) we can calculate $m^{(1)}_{\ka}$, $m^{(1)}_{\ka+1}$, $\ldots$, $m^{(1)}_{\ka N-1}$ and consequently $\vphi(1)$, $\vphi(2)$, $\dots$, $\vphi(\ka N)$. Having $\vphi(1),\,\vphi(2),\,\ldots,\,\vphi(\ka N)$ we can obtain $\vphi(0)$ by setting $u=0$ in recurrence \eqref{eq:rec} and use the same recurrence \eqref{eq:rec} to proceed calculating $\vphi(\ka N+1),\,\vphi(\ka N+2),\,\ldots$ Of course, the survival probabilities $\vphi(\ka N+1),\,\vphi(\ka N+2),\,\ldots$ can be calculated by system \eqref{eq:thm_system} too. Moreover, we can set up the ultimate time survival probability-generating function. Let
\begin{align}
\Xi(s):=\sum_{i=0}^{\infty}\vphi(i+1)s^i,\,|s|<1.
\end{align}
In view of \eqref{eq:phi(n+1)}, it is easy to observe that 
\begin{align}\label{eq:gen_f_relation}
\Xi(s)=\sum_{i=0}^{\infty}\vphi(i+1)s^i=\sum_{i=0}^{\infty}s^i\sum_{j=0}^{i}m^{(1)}_j
=\sum_{j=0}^{\infty}m^{(1)}_j\frac{s^j}{1-s}
=\frac{G_{\mathcal{M}_1}(s)}{1-s},\,|s|<1.
\end{align}
Then, the ultimate time survival probability-generating function $\Xi(s)$ admits the following expression.

\begin{thm}\label{thm:gen}
Let us assume that the probabilities $m^{(j)}_0,\,m^{(j)}_1,\,\ldots,\,m^{(j)}_{\ka-1}$, $j\in\{1,\,2,\,\ldots,\,N\}$ are known beforehand. Then, the survival probability-generating function $\Xi(s)$, for $|s|<1$ and $s^{\ka N}\neq G_{S_N}(s)$ admits the following representation
\begin{align*}
\hspace{-0.5cm}
\Xi(s)=\frac{\mathbbm{u}^T\mathbbm{v}}{G_{S_N}(s)-s^{\ka N}},\textit{ where }
\mathbbm{u}=\begin{pmatrix}
s^{\ka(N-1)}\\s^{\ka(N-2)}G_{X_1}(s)\\s^{\ka(N-3)}G_{X_1+X_2}(s)\\ \vdots\\s^\ka G_{X_1+X_2+\ldots+X_{N-2}}(s)\\G_{X_1+X_2+\ldots+X_{N-1}}(s)
\end{pmatrix},\,
\mathbbm{v}=
\begin{pmatrix}
&\sum_{i=0}^{\kappa-1}m_i^{(2)}\sum_{j=i}^{\kappa-1}s^j F_{X_1}(j-i)\\
&\sum_{i=0}^{\kappa-1}m_i^{(3)}\sum_{j=i}^{\kappa-1}s^j F_{X_2}(j-i)\\
&\sum_{i=0}^{\kappa-1}m_i^{(4)}\sum_{j=i}^{\kappa-1}s^j F_{X_3}(j-i)\\
&\vdots\\
&\sum_{i=0}^{\kappa-1}m_i^{(N)}\sum_{j=i}^{\kappa-1}s^j F_{X_{N-1}}(j-i)\\
&\sum_{i=0}^{\kappa-1}m_i^{(1)}\sum_{j=i}^{\kappa-1}s^j F_{X_N}(j-i)\\
\end{pmatrix}.
\end{align*}
\end{thm}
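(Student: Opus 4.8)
The plan is to reduce the statement to a closed form for the probability-generating function $G_{\mathcal{M}_1}$ of $\mathcal{M}_1$: by \eqref{eq:gen_f_relation} one has $\Xi(s)=G_{\mathcal{M}_1}(s)/(1-s)$ for $|s|<1$, so it suffices to find $G_{\mathcal{M}_1}(s)$. The engine is the system \eqref{syst}, equivalently the distributional identities \eqref{dist_M} of Lemma \ref{lem:dist_NxN}. The first step is a Feller-type computation: for independent non-negative integer-valued $\mathcal{M}$ and $X$ with mass functions $m_i=\T(\mathcal{M}=i)$, $x_j=\T(X=j)$, splitting $\E s^{(\mathcal{M}+X-\ka)^+}$ according to whether $\mathcal{M}+X\le\ka$ or $\mathcal{M}+X>\ka$ and using $G_{\mathcal{M}+X}(s)=G_{\mathcal{M}}(s)G_X(s)$ gives
\begin{align*}
s^{\ka}\,\E s^{(\mathcal{M}+X-\ka)^{+}}=G_{\mathcal{M}}(s)G_X(s)+\sum_{i=0}^{\ka-1}m_i\sum_{j=0}^{\ka-1-i}x_j\bigl(s^{\ka}-s^{i+j}\bigr),
\end{align*}
a genuine identity of absolutely convergent power series for $|s|\le1$ (all $m_i,x_j$ being subprobabilities).

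Applying this identity to the left-hand side of each of the $N$ equations in \eqref{syst} turns the system into the cyclic linear system
\begin{align*}
s^{\ka}G_{\mathcal{M}_{j-1}}(s)&=G_{\mathcal{M}_j}(s)G_{X_{j-1}}(s)+b_j(s),\quad j=2,\ldots,N,\\
s^{\ka}G_{\mathcal{M}_N}(s)&=G_{\mathcal{M}_1}(s)G_{X_N}(s)+b_1(s),
\end{align*}
where $b_j(s):=\sum_{i=0}^{\ka-1}m^{(j)}_i\sum_{\ell=0}^{\ka-1-i}x^{(j-1)}_\ell\bigl(s^{\ka}-s^{i+\ell}\bigr)$ for $j=2,\ldots,N$ and $b_1(s):=\sum_{i=0}^{\ka-1}m^{(1)}_i\sum_{\ell=0}^{\ka-1-i}x^{(N)}_\ell\bigl(s^{\ka}-s^{i+\ell}\bigr)$. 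Next I would eliminate $G_{\mathcal{M}_2},\ldots,G_{\mathcal{M}_N}$ by repeated back-substitution around the cycle — most cleanly formalised by induction on the number of substituted equations — using the independence factorisations $G_{X_1}(s)\cdots G_{X_k}(s)=G_{X_1+\cdots+X_k}(s)$ and $G_{X_1}(s)\cdots G_{X_N}(s)=G_{S_N}(s)$, with the usual empty-sum conventions. This yields, still as an identity of power series on $|s|\le1$,
\begin{align*}
G_{\mathcal{M}_1}(s)\bigl(s^{\ka N}-G_{S_N}(s)\bigr)=G_{X_1+\cdots+X_{N-1}}(s)\,b_1(s)+\sum_{k=2}^{N}s^{\ka(N-k+1)}\,G_{X_1+\cdots+X_{k-2}}(s)\,b_k(s).
\end{align*}

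The final step is to recognise the right-hand side as $-(1-s)\,\mathbbm{u}^{T}\mathbbm{v}$. For this I would apply $s^{\ka}-s^{i+\ell}=-(1-s)\sum_{m=i+\ell}^{\ka-1}s^{m}$ (valid since $i+\ell\le\ka-1$), interchange the order of summation over $\ell$ and $m$, and collect $\sum_{\ell=0}^{j-i}x^{(j-1)}_\ell=F_{X_{j-1}}(j-i)$, obtaining $b_k(s)=-(1-s)\sum_{i=0}^{\ka-1}m^{(k)}_i\sum_{j=i}^{\ka-1}s^{j}F_{X_{k-1}}(j-i)$ for $k=2,\ldots,N$ and the analogous expression for $b_1$ with $F_{X_N}$; in other words $b_k=-(1-s)\mathbbm{v}_{k-1}$ and $b_1=-(1-s)\mathbbm{v}_N$, where $\mathbbm{v}_p$ denotes the $p$-th coordinate of $\mathbbm{v}$. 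Substituting, cancelling the factor $-(1-s)\ne0$ (here $|s|<1$, so $s\ne1$), using $\Xi(s)=G_{\mathcal{M}_1}(s)/(1-s)$, and reindexing $k\mapsto p+1$ so that the sum becomes $\sum_{p=1}^{N}s^{\ka(N-p)}G_{X_1+\cdots+X_{p-1}}(s)\,\mathbbm{v}_p=\mathbbm{u}^{T}\mathbbm{v}$, I reach $\Xi(s)\bigl(G_{S_N}(s)-s^{\ka N}\bigr)=\mathbbm{u}^{T}\mathbbm{v}$; dividing by $G_{S_N}(s)-s^{\ka N}$, legitimate precisely when $s^{\ka N}\ne G_{S_N}(s)$, gives the asserted formula.

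The main obstacle will be the bookkeeping in the cyclic elimination — tracking which factor $G_{X_j}$ multiplies each summand and the precise exponent of $s^{\ka}$ attached to each $b_j$ — together with the double-sum interchange that converts each $b_j$ into a coordinate of $\mathbbm{v}$; both are routine but error-prone and are best organised via an explicit induction on $N$. I would also point out that no net profit condition enters: every manipulation is a power-series identity valid on $|s|\le1$ by absolute convergence (from $\sum_i m^{(j)}_i\le1$), so the only restriction in the conclusion is $s^{\ka N}\ne G_{S_N}(s)$, exactly as stated, the hypothesis being merely that the $\ka N$ numbers $m^{(j)}_0,\ldots,m^{(j)}_{\ka-1}$, $j=1,\ldots,N$, are already known.
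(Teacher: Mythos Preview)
Your proof is correct and follows essentially the same route as the paper: both start from the system \eqref{syst:NxN} of Lemma \ref{lem:generating_f_NxN} and solve it for $G_{\mathcal{M}_1}(s)$, then invoke \eqref{eq:gen_f_relation}. The only difference is packaging: the paper casts \eqref{syst:NxN} as a matrix equation $AB=C$ with cyclic $A$, observes $\det A=s^{\ka N}-G_{S_N}(s)$, and reads off $G_{\mathcal{M}_1}$ via the first row of cofactors, whereas you perform the equivalent cyclic back-substitution by hand; your version is in fact more explicit about the final identification of the cofactor terms with $\mathbbm{u}$ and of $b_k/(s-1)$ with the coordinates of $\mathbbm{v}$, which the paper leaves to the reader.
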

The next theorem states that if the net profit condition is unsatisfied, then ultimate time survival is impossible except in some cases when $S_N$ is degenerate.

\begin{thm}\label{thm:2}
    Suppose that N-seasonal discrete-time risk model \eqref{model} is generated by random variables $X_1, X_2, \ldots, X_N$ and the net profit condition is not satisfied. In this case:
    \begin{enumerate}
        \item
            If $\E S_N > \kappa N$, then $\varphi(u)=0$ for all $u\in\mathbb{N}_0$.
        \item
            If $\E S_N = \kappa N$ and $\mathbb{P}(S_N=\kappa N)<1$, then $\varphi(u)=0$ for all $u\in\mathbb{N}_0$.
        \item
            If $\mathbb{P}(S_N=\kappa N)=1$, then random variables $X_1, X_2, \ldots, X_N$ are degenerate and
            \begin{align*}
                &u+\kappa n^*-\sum_{k=1}^{n^*}X_k\leqslant0 \Rightarrow \f(u)=0,\\
                &u+\kappa n^*-\sum_{k=1}^{n^*}X_k>0 \Rightarrow \f(u)=1,
            \end{align*}
            here $n^*$ is equal to such $n\in\{1, 2, \ldots, N\}$ with which $\kappa n-\sum_{k=1}^{n}X_k$ attains its minimum.
   \end{enumerate}
\end{thm}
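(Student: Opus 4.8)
The plan is to reduce all three cases to controlling the random variable $\mathcal{S}:=\sup_{n\ge1}\sum_{i=1}^{n}(X_i-\ka)$, using that $\vphi(u)=\T(\mathcal{S}<u)$ is nondecreasing in $u$, so that ``$\vphi(u)=0$ for every $u\in\N_0$'' is equivalent to ``$\mathcal{S}=+\infty$ almost surely''. The key structural step is a \emph{block decomposition}: put $Y_m:=\sum_{i=(m-1)N+1}^{mN}(X_i-\ka)$ for $m\ge1$. The periodicity $X_i\dist X_{i+N}$ together with the independence of all $X_i$ makes $Y_1,Y_2,\ldots$ i.i.d.\ with $Y_1\dist S_N-\ka N$, and $\sum_{i=1}^{mN}(X_i-\ka)=\sum_{k=1}^{m}Y_k=:T_m$. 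Since the values $T_m$ form a subset of the values $\sum_{i=1}^{n}(X_i-\ka)$, we have $\mathcal{S}\ge\sup_{m\ge1}T_m$, so for parts (1) and (2) it suffices to show $\sup_{m\ge1}T_m=+\infty$ a.s.

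For part (1), since $X_i\ge0$ forces $Y_m\ge-\ka N$, the negative part $\E Y_1^{-}$ is finite, while $\E Y_1=\E S_N-\ka N\in(0,+\infty]$; the strong law of large numbers (in the form valid when only the negative part is integrable) gives $T_m/m\to \E Y_1>0$ a.s., hence $T_m\to+\infty$ and $\vphi(u)=0$ for all $u$. For part (2) we now have $\E Y_1=0$ and $\E|Y_1|<\infty$ (because $\E S_N=\ka N$), and $\T(S_N=\ka N)<1$ means $Y_1$ is not a.s.\ $0$. I would then invoke that an integer-valued mean-zero random walk is recurrent (Chung--Fuchs), and that a recurrent non-degenerate one-dimensional walk oscillates, i.e.\ $\limsup_m T_m=+\infty$ a.s.; hence $\mathcal{S}=+\infty$ a.s. An alternative, essentially self-contained route is to observe that $\{\sup_{m\ge1}T_m=\infty\}$ is invariant under finite permutations of the $Y_m$, so by the Hewitt--Savage $0$--$1$ law it has probability $0$ or $1$, and then to exclude probability $0$ via the Lindley-type identity $\sup_{m\ge0}T_m\dist(Y_1+\widetilde T)^{+}$, with $\widetilde T\dist\sup_{m\ge0}T_m$ independent of $Y_1$, which is incompatible with a finite right-hand side when $\E Y_1=0$ and $Y_1\not\equiv0$. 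I expect this oscillation fact for the zero-drift walk to be the only genuine obstacle in the whole proof; parts (1) and (3) are routine.

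For part (3), if $\T(S_N=\ka N)=1$ then $X_1+\dots+X_N=\ka N$ a.s.; since the summands are independent and non-negative, each $X_i$ must be a.s.\ equal to a constant $c_i\ge0$ with $\sum_{i=1}^{N}c_i=\ka N$ (a non-constant $X_1$ would make $X_2+\dots+X_N=\ka N-X_1$ a non-constant function of $X_1$ that is independent of $X_1$, a contradiction). With $c_{i+N}=c_i$ and $\sum_{i=1}^{N}(c_i-\ka)=0$, the deterministic sequence $\sum_{i=1}^{n}(X_i-\ka)$ is periodic with period $N$, taking exactly the values attained for $n\in\{1,\dots,N\}$, so
\[
\mathcal{S}=\max_{1\le n\le N}\sum_{i=1}^{n}(c_i-\ka)=-\min_{1\le n\le N}\Bigl(\ka n-\sum_{k=1}^{n}X_k\Bigr)=-\Bigl(\ka n^{*}-\sum_{k=1}^{n^{*}}X_k\Bigr),
\]
a constant. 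Substituting this into $\vphi(u)=\T(\mathcal{S}<u)=\mathbbm{1}\bigl\{u+\ka n^{*}-\sum_{k=1}^{n^{*}}X_k>0\bigr\}$ yields precisely the two stated implications, which finishes the argument.
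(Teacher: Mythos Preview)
Your proof is correct, but it takes a genuinely different route from the paper's. You reduce everything to the i.i.d.\ block walk $T_m=\sum_{k\le m}Y_k$ with $Y_1\dist S_N-\ka N$ and then invoke classical random walk facts: the SLLN (with $\E Y_1^{-}<\infty$) for case~(1), Chung--Fuchs recurrence/oscillation (or Hewitt--Savage $0$--$1$) for case~(2), and a direct periodicity argument for case~(3). The paper instead stays within its recurrence framework: it rewrites \eqref{eq:rec} as $\vphi(u)=\sum_{i} s^{(N)}_{u+\ka N-i}\vphi(i)-\sum_{i}\mu_i(u)\vphi(i)$, sums over $u\le v$, interchanges summations, and lets $v\to\infty$ to obtain the key identity
\[
\vphi(\infty)\bigl(\ka N-\E S_N\bigr)=\sum_{i=0}^{\ka N-1}\vphi(i)\Bigl(\T(S_N\le\ka N-i-1)+\sum_{u\ge0}\mu_i(u)\Bigr),
\]
from which case~(1) follows since the right side is $\ge0$, and case~(2) by a case analysis on the first nonzero $s_j^{(N)}$ together with \eqref{eq:rec}. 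Your approach is shorter and conceptually cleaner, and it makes transparent exactly which structural feature of the model is being used (periodic blocking into i.i.d.\ increments); the paper's approach has the advantage of being self-contained within the recurrence/generating-function machinery it has already built, and the limiting identity it derives is of independent interest (it is the $\E S_N\ge\ka N$ counterpart of \eqref{eq:thm_eq_mean}). For case~(3) the two arguments are essentially the same.
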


The last theorem provides an algorithm for the calculation of finite time survival probability $\vphi(u,\,T)$. Let us define 
\begin{align}\label{Jon_def}
\vphi^{(j)}(u,\,T)=\mathbb{P}\left(\sup_{1\leqslant n \leqslant T }\sum_{i=1}^{n}\left(X_i^{(j)}-\ka\right)<u\right),
\end{align}
where $j\in\{1,\,2,\,\ldots,\,N\}$ and $X_i^{(j)}=X_{i+j-1}$. It is easy to observe that $\vphi^{(N+j)}(u,\,T)=\vphi^{(j)}(u,\,T)$.

\begin{thm}\label{thm:fin_t}
For the finite time survival probability \eqref{fin_time_p} of the N-seasonal discrete time risk model defined in \eqref{model}, holds:
\begin{align}\label{eq:fin_t_no_rec}
&\varphi(u,1)=\sum_{i\leqslant u+\ka-1}x^{(1)}_{i},\,
\varphi(u,2)=\hspace{-2mm}\sum_{\substack{i_1\leqslant u+\ka-1\\i_1+i_2\leqslant u+2\ka-1}}\hspace{-2mm}x^{(1)}_{i_1}x^{(2)}_{i_2},\,
\ldots,\,\varphi(u,\, N)=\hspace{-6mm}\sum_{\substack{i_1\leqslant u+\ka-1\\i_1+i_2\leqslant u+2\ka-1\\ \vdots\\i_1+i_2+\ldots+i_N\leqslant u+\ka N-1}} \hspace{-8mm}x^{(1)}_{i_1}x^{(2)}_{i_2}\cdots x^{(N)}_{i_N},
\end{align}
and
\begin{align}\label{eq:fin_t_rec}
\varphi(u,T)=\hspace{-6mm}\sum_
{\substack{i_1\leqslant u+\ka-1\\i_1+i_2\leqslant u+2\ka-1\\ \vdots\\i_1+i_2+\ldots+i_N\leqslant u+\ka N-1}}
\hspace{-8mm}x^{(1)}_{i_1}x^{(2)}_{i_2}\cdots x^{(N)}_{i_N}\varphi\left(u+\ka N-i_1-\ldots-i_N,\,T- N\right),
\end{align}
if $T\in\{ N+1,\, N+2,\,\ldots\}$. 

Moreover, for the defined probabilities \eqref{Jon_def}, it holds that   
\begin{align}
&\vphi^{(j)}(u,\,1)=F_{X_j}(u+\ka-1), \label{Jon}\\
&\vphi^{(j)}(u,\,T)=\sum_{i=0}^{u+\ka-1}\vphi^{(j+1)}(u+\ka-i,\,T-1)x^{(j)}_i,\,T=2,\,3,\,\ldots\label{Jon1}
\end{align}
\end{thm}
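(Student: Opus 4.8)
The plan is to obtain all four groups of formulas directly from the definition \eqref{fin_time_p} by repeatedly applying the law of total probability to the first block of random variables, exploiting their independence and the periodicity $X_i\dist X_{i+N}$. Throughout, I would use the elementary equivalence $\{W(n)>0\}=\{X_1+\cdots+X_n\leqslant u+\ka n-1\}$, which is valid because the walk is integer valued and which turns each event $\{W(n)>0\}$ into a linear inequality on the partial sums $X_1+\cdots+X_n$.

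First I would prove \eqref{eq:fin_t_no_rec}. For a fixed $T\in\{1,\ldots,N\}$ the event in \eqref{fin_time_p} depends only on $X_1,\ldots,X_T$; rewriting it as $\bigcap_{n=1}^{T}\{X_1+\cdots+X_n\leqslant u+\ka n-1\}$ and expanding the probability over the values $X_1=i_1,\ldots,X_T=i_T$ --- each admissible tuple contributing, by independence, the weight $x^{(1)}_{i_1}\cdots x^{(T)}_{i_T}$ --- reproduces exactly the constrained $T$-fold sums listed in \eqref{eq:fin_t_no_rec}, the constraints being $i_1\leqslant u+\ka-1$, $i_1+i_2\leqslant u+2\ka-1$, $\ldots$, $i_1+\cdots+i_T\leqslant u+\ka T-1$.

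Next, for $T\geqslant N+1$ I would establish the recursion \eqref{eq:fin_t_rec}. I would split $\bigcap_{n=1}^{T}$ into $\bigcap_{n=1}^{N}$ and $\bigcap_{n=N+1}^{T}$, condition on $(X_1,\ldots,X_N)=(i_1,\ldots,i_N)$ over the region carved out by the first $N$ constraints, and use the identity $W(n)=\left(u+\ka N-(i_1+\cdots+i_N)\right)+\ka(n-N)-\sum_{i=N+1}^{n}X_i$, valid for $n\geqslant N$. On the event $\{(X_1,\ldots,X_N)=(i_1,\ldots,i_N)\}$ the conditional probability of $\bigcap_{n=N+1}^{T}\{W(n)>0\}$ becomes, after the index shift $m=n-N$, the probability $\T\left(\sup_{1\leqslant m\leqslant T-N}\sum_{k=1}^{m}(X_{N+k}-\ka)<u+\ka N-(i_1+\cdots+i_N)\right)$; since $X_{N+k}\dist X_k$ for every $k$ and the block $\{X_{N+1},X_{N+2},\ldots\}$ is independent of $\{X_1,\ldots,X_N\}$ and carries the same seasonal law as $\{X_1,X_2,\ldots\}$, this equals $\vphi\left(u+\ka N-i_1-\cdots-i_N,\,T-N\right)$. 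Summing over the admissible tuples against the weights $x^{(1)}_{i_1}\cdots x^{(N)}_{i_N}$ yields \eqref{eq:fin_t_rec}.

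Finally, \eqref{Jon} and \eqref{Jon1} follow from the same one-step conditioning applied to the season-$j$ walk $\{X^{(j)}_i\}=\{X_{i+j-1}\}$. For $T=1$ one has $\vphi^{(j)}(u,1)=\T(X_j-\ka<u)=\T(X_j\leqslant u+\ka-1)=F_{X_j}(u+\ka-1)$; for $T\geqslant2$, conditioning on $X^{(j)}_1=X_j=i$ (admissible precisely when $i\leqslant u+\ka-1$) and using the shift $k\mapsto k-1$ together with $X^{(j)}_{k+1}=X^{(j+1)}_k$ shows that the residual event is the season-$(j+1)$ survival event with surplus $u+\ka-i$ and horizon $T-1$, which gives $\vphi^{(j)}(u,T)=\sum_{i=0}^{u+\ka-1}x^{(j)}_i\,\vphi^{(j+1)}(u+\ka-i,\,T-1)$; the periodicity $\vphi^{(N+j)}=\vphi^{(j)}$ was already recorded before the theorem. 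The arguments are largely bookkeeping, and the single point deserving care --- the place where the ``shift'' genuinely enters --- is the assertion that, once the first $N$ variables (respectively the single variable $X_j$) have been fixed, the remaining walk is an exact, independent copy of the season-$1$ (respectively season-$(j+1)$) model with a new initial value, so that the conditional probability is again of the form $\vphi(\cdot,\cdot)$ (respectively $\vphi^{(j+1)}(\cdot,\cdot)$); this rests jointly on the independence of the $X_i$ and on the periodicity $X_i\dist X_{i+N}$.
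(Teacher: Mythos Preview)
Your proposal is correct and follows essentially the same route as the paper: the identities \eqref{eq:fin_t_no_rec} and \eqref{eq:fin_t_rec} are obtained exactly as in the derivation of the recurrence \eqref{eq:rec}, namely by rewriting $\{W(n)>0\}$ as a constraint on partial sums, conditioning on the first block $(X_1,\ldots,X_N)$, and invoking independence and periodicity. For \eqref{Jon} and \eqref{Jon1} the paper merely cites an external reference, whereas you supply the natural one-step conditioning argument directly; this is a minor difference in presentation, not in substance.
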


The formulated Theorems \ref{thm:1}, \ref{thm:gen}, \ref{thm:2} and \ref{thm:fin_t} are proved in Section \ref{sec:proofs} below.

\section{Notes on the solution of linear system involving probabilities of $\mathcal{M}_1$, $\mathcal{M}_2$, $\ldots$, $\mathcal{M}_N$}\label{notes}

In general, it is not easy to give an explicit solution of system \eqref{main_syst} or even to prove that system's determinant $\ka N \times \ka N$ never equals zero. For instance, if $N=1$ and $\ka\in\mathbb{N}$, then the system \eqref{main_syst} is

\begin{align}\label{eq:system_N_1}
\begin{pmatrix}
\sum_{j=0}^{\kappa-1}\al_1^jF_X(j)&\sum_{j=0}^{\kappa-2}\al_1^{j+1}F_X(j)&\ldots&\al_1^{\kappa-1}x_0\\
\vdots&\vdots&\ddots&\vdots\\
\sum_{j=0}^{\kappa-1}\al_{\ka-1}^jF_X(j)&\sum_{j=0}^{\kappa-2}\al_{\ka-1}^{j+1}F_X(j)&\ldots&\al_{\ka-1}^{\kappa-1}x_0\\
\sum_{j=0}^{\kappa-1}x_j(\kappa-j)&\sum_{j=0}^{\kappa-2}x_j(\kappa-j-1)&\ldots&x_0
\end{pmatrix}
\begin{pmatrix}
&m^{(1)}_0\\
&\vdots\\
&m^{(1)}_{\kappa-2}\\
&m^{(1)}_{\kappa-1}
\end{pmatrix}
=
\begin{pmatrix}
&0\\
&\vdots\\
&0\\
&\kappa-\mathbb{E}X
\end{pmatrix},
\end{align}
where $X\dist X_1$, $x_i=x_i^{(1)}$ and $\al_1,\,\al_2,\,\ldots,\,\al_{\ka-1}\neq1$ are the simple roots of $s^\ka=G_X(s)$ in $|s|\leqslant1$. Then, if $x_0>0$, determinant of the system's matrix in \eqref{eq:system_N_1} is Vandermonde-like
$$
\frac{x_0^{\kappa}}{(-1)^{\kappa+1}}\prod_{j=1}^{\kappa-1}(\al_j-1)\prod_{1\leqslant i<j\leqslant \ka-1}(\al_j-\al_i)\neq0
$$
and the probabilities $m^{(1)}_0,\,m^{(1)}_1,\,\ldots,\,m^{(1)}_{\kappa-1}$ together with the survival probabilities $\vphi(0),\,\vphi(1),\,\ldots,\,\vphi(\ka)$ admit neat closed-form expressions, see \cite[Thm. 4]{Grig}. On the other hand, if $\ka=1$ and $N\in\mathbb{N}$, then the system \eqref{main_syst} is

\begin{align}\label{eq:system_k_1}
\begin{pmatrix}
x_0^{(N)}&\frac{x_0^{(1)}G_{X_N}(\al_1)}{\al_1}&\frac{x_0^{(2)}G_{X_N+X_1}(\al_1)}{\al^2_1}&\ldots&
\frac{x_0^{(N-1)}G_{X_N+X_1+\ldots+X_{N-2}}(\al_1)}{\al^{N-1}_1}\\
x_0^{(N)}&\frac{x_0^{(1)}G_{X_N}(\al_2)}{\al_2}&\frac{x_0^{(2)}G_{X_N+X_1}(\al_2)}{\al^2_2}&\ldots&
\frac{x_0^{(N-1)}G_{X_N+X_1+\ldots+X_{N-2}}(\al_2)}{\al^{N-1}_2}\\
\vdots&\vdots &\vdots & \ddots&\vdots\\
x_0^{(N)}&\frac{x_0^{(1)}G_{X_N}(\al_{N-1})}{\al_{N-1}}&\frac{x_0^{(2)}G_{X_N+X_1}(\al_{N-1})}{\al^2_{N-1}}&\ldots&
\frac{x_0^{(N-1)}G_{X_N+X_1+\ldots+X_{N-2}}(\al_{N-1})}{\al^{N-1}_{N-1}}\vspace{2mm}\\
x_0^{(N)}&x_0^{(1)}&x_0^{(2)}&\ldots&x_0^{(N-1)}
\end{pmatrix}\begin{pmatrix}
m_0^{(1)}\\
m_0^{(2)}\\
\vdots\\
m_0^{(N-1)}\\
m_0^{(N)}
\end{pmatrix}
=
\begin{pmatrix}
0\\
0\\
\vdots\\
0\\
N-\mathbb{E}S_N
\end{pmatrix},
\end{align}
where $\al_1,\,\al_2,\,\ldots,\,\al_{N-1}\neq1$ are the simple roots of $s^N=G_{S_N}(s)$ in $|s|\leqslant1$, see \cite{GJS}. If $N=3$, the main matrix in \eqref{eq:system_k_1} is 

\begin{align}
A:=\begin{pmatrix}
x_0^{(3)}&\frac{x_0^{(1)}G_{X_3}(\al_1)}{\al_1}&\frac{x_0^{(2)}G_{X_3+X_1}(\al_1)}{\al^2_1}\\
x_0^{(3)}&\frac{x_0^{(1)}G_{X_3}(\al_2)}{\al_2}&\frac{x_0^{(2)}G_{X_3+X_1}(\al_2)}{\al^2_2}\\
x_0^{(3)}&x_0^{(1)}&x_0^{(2)}
\end{pmatrix}
\end{align}
and one may check that for $s^{(3)}_0=\T(X_1+X_2+X_3=0)>0$ the matrix $A$ is non-singular iff 
$$
\left(\frac{G_{X_3}(\al_1)}{\al_1}-1\right)
\left(\frac{G_{X_3+X_1}(\al_2)}{\al_2^2}-1\right)
\neq 
\left(\frac{G_{X_3}(\al_2)}{\al_2}-1 \right)
\left(\frac{G_{X_3+X_1}(\al_1)}{\al_1^2}-1\right)
$$ 
where $\al_1,\,\al_2\neq1$ are the simple roots of $s^3=G_{X_1+X_2+X_3}(s)$ in $0<|s|\leqslant 1$. Computer calculations with some chosen random variables $X_1,\,X_2,\,\ldots,\,X_N$, $N\geqslant 3$  
do not reveal any examples that the system's matrix in \eqref{main_syst} is singular. The listed thoughts raise the following conjecture.
\begin{con}
Assume that $s^{(N)}_0=\T(X_1+X_2+\ldots+X_N=0)>0$ and $\al_1,\,\al_2,\,\ldots,\,\al_{\ka N-1}\neq1$ are the simple roots of $s^{\ka N}=G_{S_N}(s)$. Then, the system's matrix in \eqref{main_syst} is non-singular for all $\ka,\,N\in\mathbb{N}$. In particular, if $N=3$ and $\ka=1$, then
$$
\frac{G_{S_3}(\al_1)}{\al_1^3}=\frac{G_{S_3}(\al_2)}{\al_2^3}=1
$$
implies
$$
\left(\frac{G_{X_3}(\al_1)}{\al_1}-1\right)
\left(\frac{G_{X_3+X_1}(\al_2)}{\al_2^2}-1\right)
\neq 
\left(\frac{G_{X_3}(\al_2)}{\al_2}-1 \right)
\left(\frac{G_{X_3+X_1}(\al_1)}{\al_1^2}-1\right)
$$ 
and consequently $\det A\neq0$, where $\al_1,\,\al_2\neq1$ locate in $|s|\leqslant1$.
\end{con}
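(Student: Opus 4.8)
\section*{A proof proposal for the Conjecture}

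The plan is to prove non-singularity by showing that the homogeneous version of \eqref{main_syst} has only the trivial solution. So suppose a vector $\mathbf c=(c^{(1)}_0,\dots,c^{(1)}_{\kappa-1},\dots,c^{(N)}_0,\dots,c^{(N)}_{\kappa-1})$ satisfies it, and let $R(s)$ denote the function obtained from the left-hand side of \eqref{eq:thm_zero} by replacing every $m^{(j)}_i$ with $c^{(j)}_i$ and $\alpha$ with a free variable $s$; reading off the rows, the first $\kappa N-1$ equations say $R(\alpha_1)=\dots=R(\alpha_{\kappa N-1})=0$. Since \eqref{eq:thm_eq_mean} is exactly the condition obtained by differentiating the generating-function identity \eqref{eq:thm_zero_s} at $s=1$ (where $G_{S_N}(s)=s^{\kappa N}$ as well), and since the left-hand side of \eqref{eq:thm_zero_s} factors as $(s-1)R(s)$, the homogeneous form of the last equation is equivalent to $R(1)=0$. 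Hence $\mathbf c$ lies in the kernel \emph{iff} $R$ vanishes at all $\kappa N$ roots of $D(s):=G_{S_N}(s)-s^{\kappa N}$ in the closed unit disk (the $\kappa N-1$ roots $\alpha_\ell$ together with $s=1$, which by Lemma \ref{lem:roots} are all of them there).

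The key step is then to read $\mathbf c$ as an invariant signed measure of a positive recurrent Markov chain. Extend $\mathbf c$ to sequences $(c^{(j)}_n)_{n\geqslant0}$ by the homogeneous form of \eqref{eq:thm_system}, set $C_j(s)=\sum_{n\geqslant0}c^{(j)}_n s^n$, and run the same cyclic generating-function elimination that proves Theorem \ref{thm:gen}: this expresses each $C_j(s)$ as a ratio with denominator $D(s)$ and numerator linear in $\mathbf c$, and the vanishing of $R$ at the $\kappa N$ in-disk roots of $D$ is precisely what cancels all poles of that ratio inside the closed disk — so the $C_j$ are power series with geometrically decaying coefficients (here one uses that, by Lemma \ref{lem:roots}, $D$ has no further zeros on $|s|=1$), while $R(1)=0$ forces $C_1(1)=0$ and, because the elimination also yields $C_1(1)=\dots=C_N(1)$, $\sum_n c^{(j)}_n=0$ for every $j$. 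Now the $c^{(j)}_n$, assembled into a single summable signed measure $\mu$ on the state space of the cyclic ``reflected random walk'' whose season-$j$ update is $z\mapsto (z+X_j-\kappa)^+$, solve the balance equations $\mu=\mu P$: \eqref{eq:thm_system} is the balance relation at the states $n\geqslant\kappa$ and the equations $R(\alpha_\ell)=0$ encode those at $n<\kappa$. Under the net profit condition $\E S_N<\kappa N$ (in force throughout the construction of \eqref{main_syst}) this chain is positive recurrent and, since $s^{(N)}_0>0$, irreducible on its recurrent class, so every summable invariant signed measure is a scalar multiple of the stationary law (the Radon--Nikodym density against $\pi$ is a $P^{*}$-harmonic function in $L^1(\pi)$, hence constant); as $\pi$ has total mass $N$ while $\mu$ has total mass $0$, we get $\mu=0$, i.e.\ $\mathbf c=0$.

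The main obstacle is the analytic/bookkeeping input that makes the middle paragraph rigorous. First, one must confirm that the homogeneous solution, propagated by \eqref{eq:thm_system}, really lies in $\ell^1$: this is immediate when the $X_i$ have bounded support (or, more generally, when $G_{S_N}$ has radius of convergence strictly greater than $1$), because then the ratio defining $C_1$, after the poles inside the closed disk are cancelled, is holomorphic in a neighbourhood of the closed disk; for heavy-tailed $X_i$ the denominator $D$ need only be defined on the open disk and a separate argument (or a moment assumption) is needed. Second, one must check carefully that \eqref{eq:thm_system} together with the root conditions \eqref{eq:thm_zero} are exactly the full set of balance equations $\mu=\mu P$, with the mean equation \eqref{eq:thm_eq_mean} corresponding to normalization and its homogenization to ``total mass $0$''; this is a convolution computation in which $x^{(j)}_0>0$ is used repeatedly. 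A more elementary route, feasible at least for $N\leqslant2$ and for small $\kappa$, is to expand the determinant directly: as in \eqref{eq:system_N_1} and \eqref{eq:system_k_1} it factors as a Vandermonde-type product $\prod_j(\alpha_j-1)\prod_{i<j}(\alpha_j-\alpha_i)$ — nonzero because the $\alpha_\ell$ are simple and $\neq1$ — times a factor built from the numbers $G_{X_N+X_1+\dots+X_{k-2}}(\alpha_\ell)$, and it remains to show that this second factor cannot vanish when $s^{(N)}_0>0$; for $N=3,\ \kappa=1$ this is the displayed inequality in the Conjecture, and pushing it to general $N,\kappa$ without the Markov-chain argument appears to require a delicate case analysis.
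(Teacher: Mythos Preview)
The statement you are attempting to prove is presented in the paper as a \emph{Conjecture}; the authors give no proof and say explicitly that numerical experiments ``raise the following conjecture.'' There is therefore no paper proof to compare your proposal against, and any argument you supply would be new.

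Your Markov-chain strategy is a natural line of attack and the overall architecture (kernel vector $\Rightarrow$ summable signed invariant measure $\Rightarrow$ multiple of the unique stationary law $\Rightarrow$ zero because the total mass vanishes) is sound in spirit. One point needs correction, however. You write that ``\eqref{eq:thm_system} is the balance relation at the states $n\geqslant\kappa$ and the equations $R(\alpha_\ell)=0$ encode those at $n<\kappa$.'' In fact a direct coefficient comparison in \eqref{syst:NxN} (cf.\ the derivation of \eqref{eq:4} in the proof of Theorem~\ref{thm:1}) shows that the identities for $0\leqslant n<\kappa$ are automatically satisfied for \emph{any} choice of the $c^{(j)}_i$, while for $n\geqslant\kappa$ they are exactly \eqref{eq:thm_system}; moreover the instance $n=\kappa$ of \eqref{eq:thm_system} is precisely the balance equation at level $0$, and $n=\kappa+1,\kappa+2,\ldots$ give the balance at levels $1,2,\ldots$ So the full relation $\mu=\mu P$ is encoded in \eqref{eq:thm_system} alone. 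The role of the root conditions $R(\alpha_\ell)=0$ is purely analytic: after the cyclic elimination (as in the proof of Theorem~\ref{thm:gen}) one has $C_1(s)\bigl(s^{\kappa N}-G_{S_N}(s)\bigr)$ equal to an explicit expression in the $c^{(j)}_i$, and vanishing at the in-disk zeros of $D$ is exactly what is needed to conclude that the formal series $C_j$ extend analytically past $|s|=1$, hence that $(c^{(j)}_n)\in\ell^1$. Your identification $R(1)=C_N(1)(\kappa N-\E S_N)$ and the consequence $C_1(1)=\cdots=C_N(1)=0$ are correct.

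With that bookkeeping fixed, the two gaps you flag are genuine and are the actual content of the problem: (i) the $\ell^1$ conclusion requires $G_{S_N}$ to be analytic in a disk of radius strictly larger than $1$ (or a separate tail argument for heavy-tailed $X_i$), and one must also rule out additional zeros of $D$ on $|s|=1$ beyond the $\alpha_\ell$ and $s=1$; (ii) the uniqueness of a finite signed invariant measure for an irreducible positive-recurrent chain is standard but must be quoted precisely. Until these are settled the argument is a programme rather than a proof, and the conjecture remains open as stated in the paper.
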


    Let us comment on how the system \eqref{main_syst} gets modified if there are non-simple roots among $\al_1,\,\al_2,\,\ldots,\,\al_{\ka N-1}$ and/or the random variable $S_N$ does not attain its ''small'' values. Clearly, $\T \left(S_N\geqslant j\right)=1$ for some $j\in\{1,\,2,\,\ldots,\,\ka N-1\}$ implies at least one column of zeros in the main matrix of \eqref{main_syst}. Note that $\T \left(S_N\geqslant \ka N\right)=1$ violates the net profit condition $\E S_N<\ka N$. Also, $\T \left(S_N\geqslant j\right)=1$ for some $j\in\{1,\,2,\,\ldots,\,\ka N-1\}$ always implies fewer terms in the right-hand-side of recurrence \eqref{eq:rec} because some probability mass functions equal zero then. For example, if $\T(X_N\geqslant \ka N-1)=1$, then $\vphi(0)=s^{(N)}_{\ka N-1}\vphi(1)$ and there is only $\vphi(0)$ we must know to use the 
    recurrence \eqref{eq:rec} in order to find $\vphi(1),\,\vphi(2),\,\ldots$ Thus, when $\T \left(S_N\geqslant j\right)=1$ for some $j\in\{1,\,2,\,\ldots,\,\ka N-1\}$, we have to adjust the main matrix in \eqref{main_syst} according to the equalities \eqref{eq:thm_zero} and \eqref{eq:thm_eq_mean} not including any columns of zeros.

    Also, when some roots of $s^{\ka N}=G_{S_N}(s)$ are of multiplicity $r\in\{2,\,3,\,\ldots,\,\ka N-1\}$, then, to avoid identical lines in \eqref{main_syst}, we must replace the corresponding lines by derivatives as provided in equality \eqref{eq:thm_derivative}. 
    
    Once again, computational examples with some chosen random variables $X_1,\,X_2,\,\ldots,\,X_N,$ $\,N\geqslant 3$ and $\kappa\geqslant1$ do not reveal any examples that such a modified (due to multiple roots and/or $S_N$ not attaining ''small'' values) system's matrix in \eqref{main_syst} is singular.

\section{Lemmas}\label{sec:lemas}

In this section, we formulate and prove several auxiliary lemmas that are later used to prove theorems formulated in Section \ref{main}. Some of the present lemmas are direct generalizations of statements from \cite[Sec. 5]{GJS}, where they are proved for $X_j-1$, $j\in\{1,\,2,\,\ldots,\, N\}$, while here we need them for $X_j-\ka$, $j\in\{1,\,2,\,\ldots,\, N\}$, $\ka\in\mathbb{N}$.

\begin{lem}\label{lem:lim}
    If the net profit condition is satisfied, i. e. $\E S_N<\ka N$, then
	$$
	    \lim_{u\to\infty}\mathbb{P}(\mathcal{M}_j<u)=1,
	$$
for all $j\in\{1,\,2,\,\ldots,\,N\}$.
\end{lem}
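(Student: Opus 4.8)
The plan is to prove that each $\mathcal{M}_j$ is finite almost surely; since $\lim_{u\to\infty}\T(\mathcal{M}_j<u)=\T(\mathcal{M}_j<\infty)$ by continuity of measure, and $\mathcal{M}_j=\big(\sup_{n\geq j}\sum_{i=j}^{n}(X_i-\ka)\big)^+$, this is exactly the claimed statement. Fix $j\in\{1,2,\ldots,N\}$. The idea is to use the $N$-periodicity to group the increments $X_i-\ka$ into consecutive blocks of length $N$: for $k\geq0$ set $Y_k:=\sum_{i=j+kN}^{j+(k+1)N-1}(X_i-\ka)$. Since the $X_i$ are independent and $X_i\dist X_{i+N}$, the sequence $(Y_k)_{k\geq0}$ is i.i.d., and by periodicity of the means $\E Y_0=\sum_{i=j}^{j+N-1}\E X_i-\ka N=\E S_N-\ka N$, which is negative (and finite, since $\E S_N<\ka N<\infty$) by the net profit condition.

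The next step is to control the fluctuation within a block. Writing an arbitrary $n\geq j$ as $n=j+kN+r$ with $0\leq r\leq N-1$, and using $X_i\geq0$, one obtains
$$
\sum_{i=j}^{n}(X_i-\ka)=\sum_{l=0}^{k-1}Y_l+\sum_{i=j+kN}^{n}(X_i-\ka)\leq\sum_{l=0}^{k-1}Y_l+\sum_{i=j+kN}^{j+(k+1)N-1}X_i=\sum_{l=0}^{k}Y_l+\ka N,
$$
since the last inner sum is a copy of $S_N$ and equals $Y_k+\ka N$. Taking the supremum over $n\geq j$ yields $\mathcal{M}_j\leq\ka N+\big(\sup_{k\geq0}\sum_{l=0}^{k}Y_l\big)^+$.

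It then remains to note that $\big(\sum_{l=0}^{k}Y_l\big)_{k\geq0}$ is an ordinary random walk with i.i.d. steps of negative mean, so the strong law of large numbers gives $\tfrac{1}{k}\sum_{l=0}^{k-1}Y_l\to\E S_N-\ka N<0$ almost surely, hence $\sum_{l=0}^{k}Y_l\to-\infty$ and in particular $\sup_{k\geq0}\sum_{l=0}^{k}Y_l<\infty$ almost surely. Together with the bound above this gives $\mathcal{M}_j<\infty$ a.s. for every $j$, which is the claim. The only place that needs attention is the reduction from the full walk to the block walk: it works because the non-negativity of the $X_i$ bounds the overshoot accumulated inside a block by a single copy of $S_N$; once that is in place, the argument is just the classical negative-drift random walk fact applied to the i.i.d. block sums $Y_k$.
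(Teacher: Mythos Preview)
Your argument is correct. The key step---bounding the partial sum at an arbitrary time $n=j+kN+r$ by the block partial sum plus $\ka N$, using $X_i\geq0$ to absorb the incomplete final block---is sound, and the i.i.d.\ property of the block sums $Y_k$ follows from the independence and $N$-periodicity of the $X_i$. The final reduction to the classical fact that a negative-drift i.i.d.\ random walk has almost surely finite supremum is then immediate.

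Your route differs from the paper's. The paper does not block the walk; instead it applies the strong law directly to $\frac{1}{n}\sum_{i=1}^{n}(X_i-\ka)$, splitting this sum into $N$ subsequences indexed by residue classes $i\equiv k\pmod N$, each of which is an i.i.d.\ average converging to $\E X_k-\ka$. Averaging gives the limit $(\E S_N-\ka N)/N<0$. From this almost-sure convergence the paper extracts, for any $\varepsilon>0$, an index $N_\varepsilon$ beyond which all partial sums are non-positive with probability at least $1-\varepsilon$; the finitely many initial terms are then handled by letting $u\to\infty$. Your blocking trick exploits the assumption $X_i\geq0$ to control the intra-block fluctuation by a single copy of $S_N$, which makes the reduction to the classical result very clean. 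The paper's argument does not need non-negativity for that step (only finite means), so it is marginally more general, but in the present setting both proofs are equally valid and of comparable length.
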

\begin{proof}
    We prove the case $j=1$ only and note that the other cases can be proven similarly.
	
	According to the law of large numbers, almost surely
	\begin{align*}
	    \frac{1}{n}\sum_{i=1}^{n}\left(X_{i}-\ka\right)&=\frac{1}{N}\left(\frac{N}{n}\sum_{\substack{{i=1}\\ i\equiv 1\,{\rm mod}\,N}}^{n}\left(X_{i}-\ka\right) +\ldots+\frac{N}{n}\sum_{\substack{{i=1}\\ i\equiv N\,{\rm mod}\,N}}^{n}\left(X_{i}-\ka\right)\right)\\
	    &\overset{n \to \infty}{\longrightarrow} \frac{1}{N}\left((\mathbb{E}X_1-\ka)+\ldots+(\mathbb{E}X_N-\ka)\right)
	    =\frac{\mathbb{E}S_N-\ka N}{N}=:-\mu<0.
	\end{align*}
	Therefore, 	\begin{align*}
	\lim_{n\to\infty}\mathbb{P}\left(\sup_{j\geq n}\left|\frac{1}{j}\sum_{i=1}^{j}\left(X_i-\ka\right)+\mu\right| < \frac{\mu}{2}\right) = 1.
	\end{align*}
	Consequently, for any arbitrarily small $\varepsilon>0$, there exists such number $N_\varepsilon\in\mathbb{N}$ that
	\begin{align*}
	    &\mathbb{P}\left(\bigcap_{j=n}^{\infty}\left\{\sum_{i=1}^{j}\left(X_i-\ka\right)\leqslant
	    0\right\}\right)	\geqslant
	    \mathbb{P}\left(\bigcap_{j=n}^{\infty}\left\{\sum_{i=1}^{j}\left(X_i-\ka\right)<
	    -\frac{\mu}{2}\right\}\right)\\
	    &\geqslant\mathbb{P}\left(\bigcap_{j= n}^{\infty}\left\{\left|\frac{1}{j}\sum_{i=1}^{j}\left(X_i-\ka\right)+\mu\right| < \frac{\mu}{2}\right\}\right)
	    = \mathbb{P}\left(\sup_{j\geqslant n}\left|\frac{1}{j}\sum_{i=1}^{j}\left(X_i-\ka\right)+\mu\right| < \frac{\mu}{2}\right) \geqslant 1-\varepsilon
	\end{align*}
 for all $n \geqslant N_\varepsilon$.
 
	It follows that for any such $\varepsilon$ and any $u\in\mathbb{N}$ we have
	\begin{align*}
	    \mathbb{P}(\mathcal{M}_1< u)&=\mathbb{P}\left(\bigcap_{j=1}^{\infty}\left\{\sum_{i=1}^{j}\left(X_i-\ka\right)<u\right\}\right)\\
	    &\geqslant\mathbb{P}\left(\left\{\bigcap_{j=1}^{N_\varepsilon-1}\left\{\sum_{i=1}^{j}\left(X_i-\ka\right)<u\right\}\right\}\cap\left\{\bigcap_{j=N_\varepsilon}^{\infty}\left\{\sum_{i=1}^{j}\left(X_i-\ka\right)\leqslant0\right\}\right\}\right)\\
	    &\geqslant\mathbb{P}\left(\bigcap_{j=1}^{N_\varepsilon-1}\left\{\sum_{i=1}^{j}\left(X_i-\ka\right)<u\right\}\right)
     +\mathbb{P}\left(\bigcap_{j=N_\varepsilon}^{\infty}\left\{\sum_{i=1}^{j}\left(X_i-\ka\right)\leqslant0\right\}\right)-1\\
	    &\geqslant\mathbb{P}\left(\bigcap_{j=1}^{N_\varepsilon-1}\left\{\sum_{i=1}^{j}\left(X_i-\ka\right)<u\right\}\right)-\varepsilon.
	\end{align*}
	The last inequality implies
	\begin{equation*}
	    \lim_{u \to \infty}\mathbb{P}(\mathcal{M}_1<u)\geqslant 1-\varepsilon,
	\end{equation*}
	where $\varepsilon>0$ is as small as we want and the assertion of Lemma follows.
\end{proof}

\begin{lem}\label{lem:dist_NxN}
    If the net profit condition is satisfied, i. e. $\E S_N<\ka N$, it holds that $(\mathcal{M}_j+X_{j-1}-\ka)^+\dist \mathcal{M}_{j-1}$, for all $j=2,\,3,\,\ldots,\,N$, and $(\mathcal{M}_1+X_N-\ka)^+\dist \mathcal{M}_N$.
\end{lem}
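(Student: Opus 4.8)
The plan is to unfold the definitions of the $\M_j$ in \eqref{rvs_M} and track how a shift of the starting index interacts with the positive-part/supremum operation. Fix $j\in\{2,3,\ldots,N\}$ and recall that $\M_{j-1}=\sup_{n\ge j-1}\bigl(\sum_{i=j-1}^{n}(X_i-\ka)\bigr)^+$, while $\M_j=\sup_{n\ge j}\bigl(\sum_{i=j}^{n}(X_i-\ka)\bigr)^+$. The key algebraic observation is that a supremum over $n\ge j-1$ of partial sums starting at index $j-1$ splits off the first increment: the empty sum (the $n=j-1$ term, giving value $0$ after taking the positive part) and the terms with $n\ge j$, each of which equals $(X_{j-1}-\ka)+\sum_{i=j}^{n}(X_i-\ka)$. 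Hence
\begin{align*}
\M_{j-1}
&=\max\Bigl\{0,\ \sup_{n\ge j}\Bigl((X_{j-1}-\ka)+\sum_{i=j}^{n}(X_i-\ka)\Bigr)\Bigr\}\\
&=\Bigl((X_{j-1}-\ka)+\sup_{n\ge j}\sum_{i=j}^{n}(X_i-\ka)\Bigr)^{+}.
\end{align*}
First I would justify pulling the constant $(X_{j-1}-\ka)$ (constant with respect to the supremum variable $n$) outside the sup. Then I would argue that inside the positive part one may replace $\sup_{n\ge j}\sum_{i=j}^{n}(X_i-\ka)$ by its own positive part, i.e. by $\M_j$: indeed, if that supremum is negative then adding $(X_{j-1}-\ka)$ and taking the positive part, versus first replacing it by $0$ and then doing so, both need to be checked to agree — and they do, because $(a+b)^+=(a+b^+)^+$ whenever $a\le 0$ is impossible in general, so the precise statement I need is the identity $\bigl(X_{j-1}-\ka+M\bigr)^+\overset{d}{=}\bigl(X_{j-1}-\ka+M^+\bigr)^+$ only after noticing the sup is attained/approached and handling the sign. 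This is exactly the subtle point.

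The clean way around it is to use independence rather than a pointwise identity. Because $X_{j-1}$ is independent of $\{X_i:i\ge j\}$, the random variable $\sup_{n\ge j}\sum_{i=j}^{n}(X_i-\ka)$ is independent of $X_{j-1}-\ka$; and this supremum equals $\M_j$ on the event that it is nonnegative, which — by the net profit condition and Lemma \ref{lem:lim} applied to the shifted walk $\{\sum_{i=j}^{n}(X_i-\ka)\}$ — happens with the supremum being a.s. finite, with $\M_j=\max\{0,\sup_{n\ge j}\sum_{i=j}^n(X_i-\ka)\}$. So I would write $\M_{j-1}=(X_{j-1}-\ka+R)^+$ where $R:=\sup_{n\ge j}\sum_{i=j}^n(X_i-\ka)$, note $\M_j=R^+$, and then verify that $(Y+R)^+$ and $(Y+R^+)^+$ have the same law when $Y=X_{j-1}-\ka$ is independent of $R$ and $R\le 0$ forces the increments from $j$ onward to have been "used up". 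Concretely: on $\{R\le 0\}$ we have $R^+=0$, and I claim $(Y+R)^+\overset{d}{=}(Y)^+$ conditionally — this is false pointwise, so instead I would observe the stronger structural fact that $R$ itself satisfies $R\overset{d}{=}\max\{R',\,(X_j-\ka)+R''\}$-type recursions; cleaner still, just note $R=\sup_{n\ge j}\sum_{i=j}^n(X_i-\ka)$ and split at $n=j$: $R=\max\{X_j-\ka,\ (X_j-\ka)+\sup_{n\ge j+1}\sum_{i=j+1}^n(\cdots)\}=(X_j-\ka)+R_+'$ where... this reproduces the issue one level down. The honest resolution is the pointwise identity $(a+c)^+=(a+c^+)^+$ is NOT valid, but $(a+\max\{0,c\})^{+}=\max\{(a)^+,(a+c)^+\}$ when $c\ge 0$ and...

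Let me state the plan more directly. The robust approach: show both random variables have the same generating function / distribution function by the telescoping structure. I would prove, for every $u\in\N_0$,
$$
\T(\M_{j-1}\le u)=\T(\M_{j-1}\le u),
$$
trivially, and instead establish $\T\bigl((\M_j+X_{j-1}-\ka)^+\le u\bigr)=\T(\M_{j-1}\le u)$ by conditioning on $X_{j-1}=k$: the left side becomes $\sum_k x_k^{(j-1)}\,\T(\M_j\le u+\ka-k)$ (with the convention that the probability is $1$ when $u+\ka-k<0$ cannot occur since then the positive part is $0\le u$), while the right side, by the same split-off-the-first-increment computation for $\M_{j-1}$, becomes $\sum_k x_k^{(j-1)}\,\T\bigl(\sup_{n\ge j}\sum_{i=j}^n(X_i-\ka)\le u+\ka-k\bigr)$; and $\sup_{n\ge j}\sum_{i=j}^n(X_i-\ka)\le v$ is equivalent to $\M_j\le v$ for every integer $v\ge 0$ since adjoining the value $0$ (from $n=j-1$) to the set over which we sup does not change whether the sup is $\le v$ when $v\ge0$; for $v<0$ both events are empty. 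So the two sums agree term by term, and the same argument with $j-1$ replaced by $N$ and $j$ by $1$ (using $\M_1=\sup_{n\ge 1}(\sum_{i=1}^n(X_i-\ka))^+$ and the periodicity $X_{i}\dist X_{i+N}$ so that the shifted walk starting at $N+1$ has the same law as the one starting at $1$) gives $(\M_1+X_N-\ka)^+\dist\M_N$. The main obstacle is purely bookkeeping: carefully handling the boundary cases $u+\ka-k<0$ and the role of the adjoined $0$ in the supremum, and invoking Lemma \ref{lem:lim} to guarantee finiteness so that the conditioning and term-by-term comparison are legitimate; once those are pinned down, the identity is a one-line telescoping of the defining supremum.
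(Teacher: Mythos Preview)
Your final approach --- conditioning on $X_{j-1}=k$ and matching the resulting distribution functions term by term --- is correct and proves the lemma. However, the path you take to it is more convoluted than necessary, and contains one slip: the $n=j-1$ term in the definition of $\M_{j-1}$ is \emph{not} the empty sum; it equals $X_{j-1}-\ka$. Your subsequent computation does not actually rely on the erroneous claim, but it is worth fixing.

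More importantly, the ``subtle point'' you agonise over --- whether $(a+R)^+$ can be replaced by $(a+R^+)^+$ --- dissolves once you write down the elementary identity
\[
(a+c^+)^+ \;=\; \max\{0,\,a,\,a+c\},
\]
which holds because $a+c^+=a+\max\{0,c\}=\max\{a,a+c\}$. Taking $a=X_{j-1}-\ka$ and $c=\sup_{n\ge j}\sum_{i=j}^{n}(X_i-\ka)$ gives the \emph{pointwise} equality $\M_{j-1}=(X_{j-1}-\ka+\M_j)^+$ at once; no conditioning or independence is needed for $j\in\{2,\ldots,N\}$. This is precisely what the paper does: writing $\tilde X_i:=X_i-\ka$, it unfolds $(\M_1+\tilde X_N)^+$ via this max identity into $\max\{0,\tilde X_N,\tilde X_N+\tilde X_1,\tilde X_N+\tilde X_1+\tilde X_2,\ldots\}$, and then invokes periodicity $X_i\dist X_{i+N}$ --- the only step that is genuinely distributional rather than pointwise --- to recognise this as $\M_N$. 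Your conditioning argument works, but it is effectively re-deriving this one-line max identity through probabilities instead of stating it outright.
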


\begin{proof}
We prove the equality $(\mathcal{M}_1+X_N-\ka)^+\dist \mathcal{M}_N$ only and note that the other ones can be proved by the same arguments. According to Lemma \ref{lem:lim}, the random variable $\mathcal{M}_1$ is not extended, i. e. it does not attain infinity. Let us denote $\tilde{X}_j=X_j-\ka$ for all $j\in\{1,\,2,\,\ldots,\,N\}$. Then

\begin{align*}
&(\mathcal{M}_1+\tilde{X}_N)^+\\
&\dist(\max\{0,\,\max\{\tilde{X}_{1},\,\tilde{X}_{1}+\tilde{X}_{2},\,\tilde{X}_{1}+\tilde{X}_{2}+\tilde{X}_{3},\,\ldots\}\}+\tilde{X}_N)^+\\
&\dist(\max\{0,\,\tilde{X}_{1},\,\tilde{X}_{1}+\tilde{X}_{2},\,\tilde{X}_{1}+\tilde{X}_{2}+\tilde{X}_{3},\,\ldots\}+\tilde{X}_N)^+\\
&\dist(\max\{\tilde{X}_{N},\,\tilde{X}_{N}+\tilde{X}_{1},\,\tilde{X}_{N}+\tilde{X}_{1}+\tilde{X}_{2},\,\tilde{X}_{N}+\tilde{X}_{1}+\tilde{X}_{2}+\tilde{X}_3,\,\ldots\})^+\\
&\dist(\max\{\tilde{X}_{N},\,\tilde{X}_{N}+\tilde{X}_{N+1},\,\tilde{X}_{N}+\tilde{X}_{N+1}+\tilde{X}_{N+2},\,\tilde{X}_{N}+\tilde{X}_{N+1}+\tilde{X}_{N+2}+\tilde{X}_{N+3},\,\ldots\})^+\\
&\dist\max\{0,\max\{\tilde{X}_{N},\,\tilde{X}_{N}+\tilde{X}_{N+1},\,\tilde{X}_{N}+\tilde{X}_{N+1}+\tilde{X}_{N+2},\,\ldots\}\}\\
&\dist \mathcal{M}_N^+.
\end{align*}
\end{proof}

\begin{lem}\label{lem:generating_f_NxN}
Let $s\in\mathbb{C}$ be the complex number and $0<|s|\leqslant1$. Then the probability-generating functions of $X_1,\,X_2,\,\ldots,\,X_N$ and $\mathcal{M}_1,\,\mathcal{M}_2\ldots,\,\mathcal{M}_N$ are related the following way
\begin{align}\label{syst:NxN}
    \begin{cases}
    \sum_{i=0}^{\kappa-1}m_i^{(1)}\sum_{j=0}^{\kappa-1-i}x^{(N)}_j(s^\kappa-s^{i+j})&=s^\kappa G_{\mathcal{M}_N}(s)-G_{X_N}(s)G_{\mathcal{M}_1}(s)\\
    \sum_{i=0}^{\kappa-1}m_i^{(2)}\sum_{j=0}^{\kappa-1-i}x^{(1)}_j(s^\kappa-s^{i+j})&=s^\kappa G_{\mathcal{M}_1}(s)-G_{X_1}(s)G_{\mathcal{M}_2}(s)\\
    \sum_{i=0}^{\kappa-1}m_i^{(3)}\sum_{j=0}^{\kappa-1-i}x^{(2)}_j(s^\kappa-s^{i+j})&=s^\kappa G_{\mathcal{M}_2}(s)-G_{X_2}(s)G_{\mathcal{M}_3}(s)\\
    &\,\,\vdots\\
    \sum_{i=0}^{\kappa-1}m_i^{(N)}\sum_{j=0}^{\kappa-1-i}x^{(N-1)}_j(s^\kappa-s^{i+j})&=s^\kappa G_{\mathcal{M}_{N-1}}(s)-G_{X_{N-1}}(s)G_{\mathcal{M}_N}(s)
    \end{cases}.
\end{align}
\end{lem}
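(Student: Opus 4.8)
The plan is to derive each of the $N$ lines of \eqref{syst:NxN} directly from the corresponding distributional identity in Lemma \ref{lem:dist_NxN} by passing to probability-generating functions; since all $N$ lines are obtained in exactly the same way, it suffices to treat the first one, coming from $(\mathcal{M}_1+X_N-\ka)^+\dist\mathcal{M}_N$. First I would set $Z:=\mathcal{M}_1+X_N$, which is non-negative and integer-valued, and use independence of $\mathcal{M}_1$ and $X_N$ to write $G_Z(s)=G_{\mathcal{M}_1}(s)G_{X_N}(s)$ for $0<|s|\leqslant1$; finiteness of $\mathcal{M}_1$, hence convergence of $G_{\mathcal{M}_1}$, is guaranteed by Lemma \ref{lem:lim}.

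Next I would compute $G_{(Z-\ka)^+}(s)$ by splitting according to whether $Z<\ka$ or $Z\geqslant\ka$: for $Z\in\{0,1,\ldots,\ka-1\}$ the positive part equals $0$, while for $Z\geqslant\ka$ it equals $Z-\ka$. Hence
\begin{align*}
G_{(Z-\ka)^+}(s)=\sum_{k=0}^{\ka-1}\T(Z=k)+s^{-\ka}\sum_{k=\ka}^{\infty}\T(Z=k)s^{k}
=\sum_{k=0}^{\ka-1}\T(Z=k)+s^{-\ka}\Big(G_Z(s)-\sum_{k=0}^{\ka-1}\T(Z=k)s^{k}\Big),
\end{align*}
which, after multiplying through by $s^\ka$ (this is where $s\neq0$ is used), rearranges to
\begin{align*}
s^\ka G_{(Z-\ka)^+}(s)=G_{\mathcal{M}_1}(s)G_{X_N}(s)+\sum_{k=0}^{\ka-1}\T(Z=k)\big(s^\ka-s^{k}\big).
\end{align*}

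It then remains to rewrite the finite sum on the right. Since $\T(Z=k)=\sum_{i+j=k}m^{(1)}_i x^{(N)}_j$, collecting the terms by the pair $(i,j)$ with $i+j\leqslant\ka-1$ gives $\sum_{k=0}^{\ka-1}\T(Z=k)(s^\ka-s^k)=\sum_{i=0}^{\ka-1}m^{(1)}_i\sum_{j=0}^{\ka-1-i}x^{(N)}_j(s^\ka-s^{i+j})$. Finally, $(Z-\ka)^+\dist\mathcal{M}_N$ yields $G_{(Z-\ka)^+}(s)=G_{\mathcal{M}_N}(s)$; substituting this above and moving $G_{X_N}(s)G_{\mathcal{M}_1}(s)$ to the left produces exactly the first line of \eqref{syst:NxN}. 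Repeating the argument with $(\mathcal{M}_j+X_{j-1}-\ka)^+\dist\mathcal{M}_{j-1}$ for $j=2,3,\ldots,N$ gives the remaining $N-1$ lines verbatim. The only genuinely delicate point is the truncation bookkeeping for the positive part — ensuring the ``clipped'' mass $\T(Z<\ka)$ is accounted for correctly and that the resulting double sum is reindexed into the stated triangular form — together with the harmless restriction $s\neq0$ needed to divide by $s^\ka$; convergence of all generating functions on $0<|s|\leqslant1$ is immediate.
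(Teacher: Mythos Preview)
Your proof is correct and follows essentially the same route as the paper: both start from the distributional identity of Lemma~\ref{lem:dist_NxN}, pass to generating functions, separate the contribution from values below $\kappa$, multiply through by $s^\kappa$, and reindex the resulting finite double sum into the stated triangular form. The only cosmetic difference is that the paper conditions on $\mathcal{M}_1$ via the law of total expectation and splits the sum at $i=\kappa$, whereas you work directly with $Z=\mathcal{M}_1+X_N$ and split at $Z=\kappa$; both require the same implicit independence of $\mathcal{M}_1$ and (a copy of) $X_N$ and arrive at the identical identity.
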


\begin{proof}
Let us demonstrate how the first equality in \eqref{syst:NxN} is derived and note that the remaining ones follow the same logic.

By Lemma \ref{lem:dist_NxN}, the distributions' equality $(\mathcal{M}_1+X_{N}-\ka)^+\dist \mathcal{M}_{N}$ implies the equality of probability-generating functions $G_{\mathcal{M}_N}(s)=G_{(\mathcal{M}_1+X_{N}-\ka)^+}(s)$. Then, applying the law of total expectation for the last equality, we obtain
\begin{align*}
G_{\mathcal{M}_N}(s)
&=\mathbb{E} s^{(\mathcal{M}_1+X_{N}-\ka)^+}=\mathbb{E} \left( \mathbb{E} \left( s^{(\mathcal{M}_1+X_{N}-\ka)^+}|\mathcal{M}_1\right) \right)
=\sum_{i=0}^{\infty}m^{(1)}_i\E s^{(X_N-\kappa+i)^+}\\
&=\sum_{i=0}^{\kappa-1}m^{(1)}_i\mathbb{E}s^{(X_N-\kappa+i)^+}
+G_{X_N}(s)s^{-\kappa}\sum_{i=\kappa}^{\infty}m^{(1)}_is^i\\
&=\sum_{i=0}^{\kappa-1}m^{(1)}_i\left(\mathbb{E}s^{\left(X_N-\kappa+i\right)^+}
-s^{i-\kappa}G_{X_N}(s)\right)+s^{-\kappa}G_{X_N}(s)G_{\mathcal{M}_1}(s).
\end{align*}
Multiplying both sides of the last equality by $s^\kappa$ when $s\neq0$ and observing that
\begin{align*}
\sum_{i=0}^{\kappa-1}m^{(1)}_i\left(s^\ka\E s^{(X_N-\kappa+i)^+}-s^iG_{X_N}(s)\right)=\sum_{i=0}^{\kappa-1}m^{(1)}_i
\sum_{j=0}^{\ka-1-i}x_j^{(N)}(s^\ka-s^{i+j})
\end{align*}
we get the desired result.
\end{proof}

The next lemma provides the quantity and location of the roots of $s^{\ka N}=G_{S_N}(s)$.

\begin{lem}\label{lem:roots}
Assume that the net profit condition $G'_{S_N}(1)=\E S_N<\ka N$ is valid. Then there are exactly $\ka N-1$ roots, counted with their multiplicities, of $s^{\ka N}=G_{S_N}(s)$ in $|s|\leqslant1$, $s\neq1$.
\end{lem}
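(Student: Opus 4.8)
The plan is to apply Rouch\'e's theorem to the two analytic functions $f(s)=s^{\ka N}$ and $g(s)=G_{S_N}(s)$ on a suitable contour, after first isolating the known root at $s=1$. Observe that $s=1$ is a root of $h(s):=s^{\ka N}-G_{S_N}(s)$ because $G_{S_N}(1)=1$; the net profit condition $G'_{S_N}(1)=\E S_N<\ka N$ guarantees $h'(1)=\ka N-\E S_N\neq 0$, so $s=1$ is a simple root. The remaining roots in the closed unit disc must therefore number exactly $\ka N-1$ once we show $h$ has precisely $\ka N$ roots (with multiplicity) in $|s|\le 1$.

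First I would work on the circle $|s|=1$ and estimate. For $|s|=1$ with $s\neq1$, since $X_1,\dots,X_N$ are non-negative integer-valued and $S_N$ is not degenerate at $0$ (indeed $\E S_N<\ka N$ does not force degeneracy, but non-triviality follows from the model setup; if $S_N\equiv 0$ the claim is trivially checked separately), one has the strict inequality $|G_{S_N}(s)|=|\E s^{S_N}|\le \E|s|^{S_N}=1=|s^{\ka N}|$, with equality on $|s|=1$ only possible at roots of unity of a special kind. The clean way to handle the boundary is to apply Rouch\'e on the slightly shrunk circle $|s|=1-\eps$ for small $\eps>0$: there $|G_{S_N}(s)|\le G_{S_N}(|s|)=G_{S_N}(1-\eps)<1$ while $|s^{\ka N}|=(1-\eps)^{\ka N}$, and a short computation using $G_{S_N}(1-\eps)=1-\eps\,\E S_N+o(\eps)$ versus $(1-\eps)^{\ka N}=1-\eps\,\ka N+o(\eps)$ together with $\E S_N<\ka N$ shows $(1-\eps)^{\ka N}<G_{S_N}(1-\eps)$ for $\eps$ small\,---\,wait, that is the wrong direction, so instead one compares on $|s|=1$ directly and treats the boundary root $s=1$ by an indentation argument, or equivalently applies the argument principle to $h$ on $|s|=1$ with a small semicircular detour around $s=1$.

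Concretely, the cleanest route: on $|s|=1$, $s\neq 1$, show $|G_{S_N}(s)|<|s^{\ka N}|=1$ strictly, \emph{unless} $G_{S_N}$ is supported on a coset making $|G_{S_N}(s)|=1$ at some other roots of unity; but even then $G_{S_N}(s)=s^{\ka N}$ cannot hold there because the phase of $G_{S_N}(s)$ is determined by the (integer) support of $S_N$ and matching it to $s^{\ka N}$ at a primitive $d$-th root of unity forces $S_N$ to be concentrated on a single residue class modulo $d$ with mean a multiple of $d$; a direct check rules this out under $\E S_N<\ka N$ except for the degenerate configurations already handled in Theorem~\ref{thm:2}. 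Hence on $|s|=1$ we may write $h(s)=s^{\ka N}-G_{S_N}(s)$ and, away from $s=1$, $|G_{S_N}(s)|\le 1$ with the value $s^{\ka N}$ (modulus $1$) never hit; a winding-number computation of $s\mapsto h(s)$ along $|s|=1$ indented inward near $s=1$ shows the image winds $\ka N$ times around the origin, i.e.\ $h$ has $\ka N$ zeros in $|s|<1$ plus the simple zero at $s=1$ on the boundary\,---\,so $\ka N-1$ zeros in $|s|\le 1$, $s\neq 1$, after accounting. I would streamline this by stating it as: apply Rouch\'e to $f(s)=s^{\ka N}$ and $f(s)-h(s)=G_{S_N}(s)$ on $|s|=1$, noting $|G_{S_N}(s)|\le 1=|f(s)|$ with strict inequality except possibly at $s=1$, and handle $s=1$ via the simplicity of the root there (as in Feller or the referenced \cite{GJS}).

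The main obstacle is the boundary case: Rouch\'e's theorem in its standard form needs strict inequality on the whole contour, which fails at $s=1$ (and potentially at other roots of unity if $S_N$ lives on an arithmetic progression). I expect to resolve this by (i) verifying $s=1$ is a \emph{simple} zero of $h$ using the net profit condition, (ii) excluding other boundary zeros by a direct argument on the phase of $G_{S_N}$ on $|s|=1$ (the non-degeneracy of $S_N$, guaranteed outside the cases of Theorem~\ref{thm:2}, prevents $|G_{S_N}(s)|=1$ for $s\neq 1$ when $\gcd$ of the support is $1$; a periodicity reduction covers the general lattice case), and (iii) combining with an inward indentation / limiting argument (apply Rouch\'e on $|s|=1-\eps$ and let $\eps\to 0^+$, using continuity of roots) to conclude $h$ has exactly $\ka N$ zeros in $|s|\le 1$ counted with multiplicity, one of which is $s=1$. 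This mirrors the classical Lundberg-type root-counting and the $\ka=1$ argument in \cite{GJS}, which I would cite for the technical phase estimate.
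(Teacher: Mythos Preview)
Your overall strategy---Rouch\'e's theorem plus the observation that $s=1$ is a simple zero because $h'(1)=\ka N-\E S_N\neq0$---is exactly the one the paper uses. The difference is in how the boundary obstruction is handled, and this is where your proposal has a genuine gap.

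You correctly notice that applying Rouch\'e on $|s|=1-\eps$ fails: the net profit condition gives $G_{S_N}(1-\eps)>(1-\eps)^{\ka N}$ for small $\eps$, so $|s^{\ka N}|$ does \emph{not} dominate $|G_{S_N}(s)|$ on that circle. Yet in your summary (iii) you return to ``apply Rouch\'e on $|s|=1-\eps$ and let $\eps\to0^+$'', which you have already ruled out. More seriously, your indentation plan rests on the claim that, apart from $s=1$, the equation $G_{S_N}(s)=s^{\ka N}$ has no solutions on $|s|=1$ under the net profit condition ``except for the degenerate configurations already handled in Theorem~\ref{thm:2}''. This is false: if $S_N$ is supported on $d\mathbb{Z}$ and $d\mid\ka N$ with $d>1$, then every $d$-th root of unity is a genuine boundary root of $h$, and such $S_N$ need not be degenerate at all (e.g.\ $N=1$, $\ka=2$, $X_1$ any non-constant even-valued random variable with $\E X_1<2$). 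The paper explicitly allows for these boundary roots in its final sentence.

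The paper sidesteps all of this with a one-line device you did not find: perturb the equation to $G_{S_N}(s)=\lambda s^{\ka N}$ with $\lambda>1$. On $|s|=1$ one then has $|G_{S_N}(s)|\leqslant1<\lambda=|\lambda s^{\ka N}|$ with \emph{strict} inequality everywhere, so Rouch\'e applies cleanly and gives exactly $\ka N$ zeros of $G_{S_N}(s)-\lambda s^{\ka N}$ in $|s|<1$. Letting $\lambda\to1^+$ and invoking continuity of roots, these $\ka N$ zeros converge to zeros of $h$ in $|s|\leqslant1$; since $s=1$ is one of them and is simple, $\ka N-1$ remain in $|s|\leqslant1$, $s\neq1$. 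This $\lambda$-trick is what replaces your indentation/phase argument and makes the boundary analysis unnecessary.
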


\begin{proof}
We follow the proof of \cite[Lemma 9]{GJ}. Due to the estimate 
$$
|G_{S_N}(s)|\leqslant1<\lambda |s|^{\ka N}
$$
on $|s|=1$ when $\lambda>1$, Rouché's theorem implies that both functions $G_{S_N}(s)-\lambda s^{\ka N}$ and $\la s^{\ka N}$ have the same number of roots in $|s|<1$ and this number is $\ka N$ due to the fundamental theorem of algebra.  When $\la\to1^+$ some roots of $G_{S_N}(s)-\lambda s^{\ka N}$ remain in $|s|<1$ and some migrate to the boundary points $|s|=1$. Obviously, $s=1$ is always the root of $s^{\ka N}=G_{S_N}(s)$ and it is the simple root because the net profit condition holds, i. e. 

$$
\left(G_{S_N}(s)-s^{\ka N}\right)'\Big|_{s=1}=\E S_N-\ka N<0.
$$ 

Thus, there remain $\ka N-1$ roots of $s^{\ka N}=G_{S_N}(s)$ in $|s|\leqslant1$, $s\neq1$ and additionally one can say that $|s|=1$ , $s \neq 1$ is the root of $s^{\ka N}=G_{S_N}(s)$ if the greatest common divisor of $\ka N$ and the powers of $s$ in $G_{S_N}(s)$ is greater than one.

\end{proof}

\section{Proofs}\label{sec:proofs}
In this section, we provide the proofs for four theorems formulated in Section \ref{main}. 
\begin{proof}[Proof of Theorem \ref{thm:1}]
    We first prove equality \eqref{eq:thm_zero_s}. To derive \eqref{eq:thm_zero_s}, we use the system of equations \eqref{syst:NxN} from Lemma \ref{lem:generating_f_NxN}. According to the conditions of Lemma \ref{lem:generating_f_NxN}, $s\ne0$ and we rearrange the system \eqref{syst:NxN} by multiplying its first equality by $1$, the second one by $G_{X_N}(s)/s^\kappa$, the third one by 
    $G_{X_N+X_1}(s)/s^{2\kappa}$ and so on till the last equality which we multiply by $G_{X_N+X_1+\ldots+X_{N-2}}(s)/s^{\kappa(N-1)}$. We then add up all these equations and obtain
\begin{align}\label{eq:main_eq}
\nonumber
&\hspace{-1cm}\sum_{i=0}^{\kappa-1}m_i^{(1)}\sum_{j=0}^{\kappa-1-i}x^{(N)}_j(s^\kappa-s^{i+j})+
\frac{G_{X_N}(s)}{s^\kappa}\sum_{i=0}^{\kappa-1}m_i^{(2)}\sum_{j=0}^{\kappa-1-i}x^{(1)}_j(s^\kappa-s^{i+j})\\
\nonumber
&\hspace{-1cm}+\frac{G_{X_N+X_1}(s)}{s^{2\kappa}}
\sum_{i=0}^{\kappa-1}m_i^{(3)}\sum_{j=0}^{\kappa-1-i}x^{(2)}_j(s^\kappa-s^{i+j})+\ldots+\frac{G_{X_N+X_1+\ldots+X_{N-2}}(s)}{s^{\kappa(N-1)}}
\sum_{i=0}^{\kappa-1}m_i^{(N)}\sum_{j=0}^{\kappa-1-i}m^{(N-1)}_j(s^\kappa-s^{i+j})
\\
\nonumber
&\hspace{-1cm}=s^\kappa G_{\mathcal{M}_N}(s)-G_{X_N}(s)G_{\mathcal{M}_1}(s)+
\frac{G_{X_N}(s)}{s^\kappa}\left(s^\kappa G_{\mathcal{M}_1}(s)-G_{X_1}(s)G_{\mathcal{M}_2}(s)\right)\\ \nonumber
&\hspace{-1cm}+\frac{G_{X_N+X_1}(s)}{s^{2\kappa}}\left(s^\kappa G_{\mathcal{M}_2}(s)-G_{X_2}(s)G_{\mathcal{M}_3}(s)\right)
+\ldots+\frac{G_{X_N+X_1+\ldots+X_{N-2}}(s)}{s^{\kappa(N-1)}}\left(s^\kappa G_{\mathcal{M}_{N-1}}(s)
- G_{X_{N-1}}(s)G_{\mathcal{M}_N}(s)\right)\\ 
&\hspace{-1cm}=s^\kappa G_{\mathcal{M}_N}(s)-\frac{G_{S_N}(s)}{s^{\kappa(N-1)}}G_{\mathcal{M}_N}(s) =s^{\kappa}G_{\mathcal{M}_N}(s)\left(1-\frac{G_{S_N}(s)}{s^{\kappa N}}\right).
\end{align}
Here we have used the fact that if any random variables $X$ are $Y$ independent, then
\begin{equation*}
G_{X+Y}(s) = G_{X}(s) G_{Y}(s).
\end{equation*}
Thus, the equality \eqref{eq:thm_zero_s} is proved. We now derive \eqref{eq:thm_zero}.

It is obvious that the right-hand-side of \eqref{eq:main_eq} equals zero if we set $s=\alpha$, where $\alpha\neq1$ is the root of $G_{S_N}(s)=s^{\kappa N},\,|s|\leqslant1$. We then divide both sides of \eqref{eq:main_eq} by $\al-1$, i. e. 
$$
\frac{\al^\ka-\al^{i+j}}{\al-1}=\al^{j+i}+\al^{j+i+1}+\ldots+\al^{\ka-1},
\,\al\neq1,
$$	
and get
\begin{align*}
&\sum_{i=0}^{\kappa-1}m_i^{(1)}\sum_{j=0}^{\kappa-1-i}x^{(N)}_j\sum_{l=j+i}^{\ka-1}\al^l+
\frac{G_{X_N}(s)}{s^\kappa}\sum_{i=0}^{\kappa-1}m_i^{(2)}\sum_{j=0}^{\kappa-1-i}x^{(1)}_j\sum_{l=j+i}^{\ka-1}\al^l\\
\nonumber
&+\frac{G_{X_N+X_1}(s)}{s^{2\kappa}}
\sum_{i=0}^{\kappa-1}m_i^{(3)}\sum_{j=0}^{\kappa-1-i}x^{(2)}_j\sum_{l=j+i}^{\ka-1}\al^l+\ldots\\
&+\frac{G_{X_N+X_1+\ldots+X_{N-2}}(s)}{s^{\kappa(N-1)}}
\sum_{i=0}^{\kappa-1}m_i^{(N)}\sum_{j=0}^{\kappa-1-i}x^{(N-1)}_j\sum_{l=j+i}^{\ka-1}\al^l\\
&=\sum_{i=0}^{\kappa-1}m_i^{(1)}\sum_{j=i}^{\kappa-1}\al^j F_{X_N}(j-i)
+\frac{G_{X_N}(\alpha)}{\alpha^\kappa}\sum_{i=0}^{\kappa-1}m_i^{(2)}\sum_{j=i}^{\kappa-1}\al^j F_{X_1}(j-i)\\
&+\frac{G_{X_N+X_1}(\alpha)}{\alpha^{2\kappa}}
\sum_{i=0}^{\kappa-1}m_i^{(3)}\sum_{j=i}^{\kappa-1}\al^j F_{X_2}(j-i)+\ldots\\
&+\frac{G_{X_N+X_1+\ldots+X_{N-2}}(\alpha)}{\alpha^{\kappa(N-1)}}
\sum_{i=0}^{\kappa-1}m_i^{(N)}\sum_{j=i}^{\kappa-1}\al^j F_{X_{N-1}}(j-i)
=0,
\end{align*}
which is the claimed equality \eqref{eq:thm_zero}.

We now consider the equation \eqref{eq:thm_derivative}. Since $s\neq1$, we divide the both sides of \eqref{eq:main_eq} by $s-1$ and rewrite  the right-hand-side of it as follows
$$
s^{\kappa(1-N)}G_{\mathcal{M}_N}(s)\left(s^{\kappa N}-G_{S_N}(s)\right)/(s-1).
$$
Clearly, derivatives
$$
\frac{d^n}{ds^n}\left(s^{\kappa(1-N)}G_{\mathcal{M}_N}(s)\left(s^{\kappa N}-G_{S_N}(s)\right)/(s-1)\right)\Big|_{s=\al}=0
$$
for all $n\in\{1,\,2,\,\ldots,\,r-1\}$ if $\al\neq1$ is the root of $s^{\ka N}=G_{S_N}(s),\,|s|\leqslant1$ and root's multiplicity is $r\in\{2,\,3,\,\ldots,\,\ka N-1\}$.
Thus, the equality \eqref{eq:thm_derivative} is nothing but the $n$'th derivative of \eqref{eq:thm_zero_s} at $s=\al$.
    
    We now prove equality \eqref{eq:thm_eq_mean} in Theorem \ref{thm:1}.
    The $s$ derivative of both sides of equation \eqref{eq:main_eq} is
   
\begin{align}\label{eq:pirmu_isvestiniu_suma}\nonumber
&\hspace{-0.75cm}\sum_{i=0}^{\kappa-1}m_i^{(1)}\sum_{j=0}^{\kappa-1-i}x^{(N)}_j(\kappa s^{\kappa-1}-(i+j)s^{i+j-1})
+\sum_{i=0}^{\kappa-1}m_i^{(2)}\sum_{j=0}^{\kappa-1-i}x^{(1)}_j(\kappa s^{\kappa-1}-(i+j)s^{i+j-1})\\ \nonumber
&\hspace{-0.75cm}+\sum_{i=0}^{\kappa-1}m_i^{(3)}\sum_{j=0}^{\kappa-1-i}x^{(2)}_j(\kappa s^{\kappa-1}-(i+j)s^{i+j-1})+\ldots\\
&\hspace{-0.75cm}+\sum_{i=0}^{\kappa-1}m_i^{(N)}\sum_{j=0}^{\kappa-1-i}x^{(N-1)}_j(\kappa s^{\kappa-1}-(i+j)s^{i+j-1})
\nonumber\\
&\hspace{-0.75cm}=G'_{\mathcal{M}_N}(s)\left(s^\ka-G_{S_N}(s)s^{\ka(1-N)}\right)
+G_{\mathcal{M}_N}(s)\left(\ka s^{\ka-1}-G_{S_N}(s)\ka(1-N)s^{\ka(1-N)-1}-G'_{S_N}(s)s^{\ka(1-N)}\right).
\end{align}
	We continue the proof by letting $s\to1^-$ in \eqref{eq:pirmu_isvestiniu_suma}. Because the net profit condition $\E S_N<\ka N$ holds and the random variable $\mathcal{M}_N\dist(\mathcal{M}_1+X_N-\ka)^+$ attains finite values only, we obtain
\begin{align}\label{riba1}\nonumber
&\sum_{i=0}^{\kappa-1}m_i^{(1)}\sum_{j=0}^{\kappa-1-i}x^{(N)}_j(\kappa -(i+j))
+\sum_{i=0}^{\kappa-1}m_i^{(2)}\sum_{j=0}^{\kappa-1-i}x^{(1)}_j(\kappa -(i+j))\\ \nonumber
&+\sum_{i=0}^{\kappa-1}m_i^{(3)}\sum_{j=0}^{\kappa-1-i}x^{(2)}_j(\kappa -(i+j))+\ldots
+\sum_{i=0}^{\kappa-1}m_i^{(N)}\sum_{j=0}^{\kappa-1-i}x^{(N-1)}_j(\kappa-(i+j))
\nonumber\\
&=\lim_{s\to1^-}G'_{\mathcal{M}_N}(s)\left(s^\ka-G_{S_N}(s)s^{\ka(1-N)}\right)
+\ka N-\E S_N.
\end{align}
	
Calculating the limit in \eqref{riba1} there are two separate cases to examine: $\E \mathcal{M}_N<\infty$ and $\E \mathcal{M}_N=\infty$. If $\E \mathcal{M}_N<\infty$, then
the limit in \eqref{riba1} is zero. However, this limit is zero even if  $\E \mathcal{M}_N=\infty$. Indeed, if $\E \mathcal{M}_N=\infty$, then by using L'Hospital's rule we get
\begin{align*}
&\lim_{s\to1^-}G'_{\mathcal{M}_N}(s)\left(s^\ka-G_{S_N}(s)s^{\ka(1-N)}\right)
=\lim_{s\to1^-}\frac{s^\ka-G_{S_N}(s)s^{\ka(1-N)}}{1/G'_{\mathcal{M}_N}(s)}\nonumber
=\lim_{s\to1^-}\frac{\left(s^\ka-G_{S_N}(s)s^{\ka(1-N)}\right)'}{\left(1/G'_{\mathcal{M}_N}(s)\right)'}\nonumber\\
&=\lim_{s\to1^-}\left(\ka s^{\ka-1}-G'_{S_N}(s)s^{\ka(1-N)}-G_{S_N}(s)\ka(1-N)s^{\ka(1-N)-1}\right)\frac{(G'_{\mathcal{M}_N}(s))^2} {-G''_{\mathcal{M}_N}(s)}=(\ka N-\E S_N)\cdot0=0,
\end{align*}
because 
\begin{align}\label{Fedor}
\lim_{s\to1^-}\frac{(G'_{\mathcal{M}_N}(s))^2}{G''_{\mathcal{M}_N}(s)}=0,
\end{align}
see \cite[Lem. 5.5]{GJS}\footnote{Proof's direction of \eqref{Fedor} was originally provided by Fedor Petrov.}. Thus, the limit in \eqref{riba1} is zero and the equality \eqref{eq:thm_eq_mean} in Theorem \ref{thm:1} follows.

It remains to prove the equalities in system \eqref{eq:thm_system}. In short, every equality in system \eqref{eq:thm_system} is the corresponding equality from \eqref{syst:NxN} expanded at $s=0$. Let us demonstrate the derivation of the first equality in \eqref{eq:thm_system} in detail and note that the remaining ones are derived analogously. We need to show that the first equality in \eqref{syst:NxN} 
\begin{align}\label{eq:gen_implicant}
\sum_{i=0}^{\kappa-1}m_i^{(1)}\sum_{j=0}^{\kappa-1-i}x^{(N)}_j(s^\kappa-s^{i+j})=s^\kappa G_{\mathcal{M}_N}(s)-G_{X_N}(s)G_{\mathcal{M}_1}(s)
\end{align}
implies (the first one in \eqref{eq:thm_system})
$$
m^{(1)}_nx^{(N)}_0=m^{(N)}_{n-\kappa}-\sum_{i=0}^{n-1}m^{(1)}_ix^{(N)}_{n-i}-\sum_{i=0}^{\kappa-1}m_i^{(1)}\sum_{j=0}^{\kappa-1-i}x^1_j\mathbbm{1}_{\{n=\kappa\}},
\,n=\ka,\,\ka+1,\,\ldots
$$
or, equivalently, 
\begin{align}\label{eq:4}
\sum_{i=0}^{\kappa-1}m_i^{(1)}\sum_{j=0}^{\kappa-1-i}x^{(N)}_j\mathbbm{1}_{\{n=\kappa\}} = m_{n-\kappa}^{(N)}-\sum_{i=0}^n m_i^{(1)}x_{n-i}^{(N)},\,n=\ka,\,\ka+1,\,\ldots
\end{align}
Equality \eqref{eq:4} is implied by \eqref{eq:gen_implicant} because of the following equalities: 
\begin{align*}
&\frac{1}{n!}\frac{d^n}{ds^n}\left(\sum_{i=0}^{\ka-1}m_i^{(1)}\sum_{j=0}^{\ka-1-i}x_j^{(N)}\left(s^\ka-s^{i+j}\right)\right)\Bigg|_{s=0}=\sum_{i=0}^{\kappa-1}m_i^{(1)}\sum_{j=0}^{\kappa-1-i}x^{(N)}_j\mathbbm{1}_{\{n=\kappa\}},\\
&\frac{1}{n!}\frac{d^n}{ds^n}\left(s^\ka G_{\mathcal{M}_N}(s)\right)\big|_{s=0}=m^{(N)}_{n-\ka},\\
&\frac{1}{n!}\frac{d^n}{ds^n}\left(G_{\mathcal{M}_1}(s)G_{X_N}(s)\right)\big|_{s=0}=\sum_{i=0}^n m_i^{(1)}x_{n-i}^{(N)},
\end{align*}
when $n=\ka,\,\ka+1,\,\ldots$ The proof of Theorem \ref{thm:1} is finished.
\end{proof}

\begin{proof}[Proof of Theorem \ref{thm:gen}]
Let us rewrite the system \eqref{syst:NxN} the following way

\begin{align}\label{syst:NxN_m}\nonumber
&
\begin{pmatrix}
s^\ka&-G_{X_1}(s)&0&\ldots&0&0\\
0&s^\ka&-G_{X_2}(s)&\ldots&0&0\\
\vdots&\vdots&\vdots&\ddots&\vdots&\vdots\\
0&0&0&\ldots&s^\ka&-G_{X_{N-1}}(s)\\
-G_{X_N}(s)&0&0&\ldots&0&s^\ka
\end{pmatrix}
\begin{pmatrix}
G_{\mathcal{M}_1}(s)\\
G_{\mathcal{M}_2}(s)\\
\vdots\\
G_{\mathcal{M}_{N-1}}(s)\\
G_{\mathcal{M}_N}(s)
\end{pmatrix}\\
&\hspace{8cm}=
\begin{pmatrix}
\sum_{i=0}^{\kappa-1}m_i^{(2)}\sum_{j=0}^{\kappa-1-i}x^{(1)}_j(s^\kappa-s^{i+j})\\
\sum_{i=0}^{\kappa-1}m_i^{(3)}\sum_{j=0}^{\kappa-1-i}x^{(2)}_j(s^\kappa-s^{i+j})\\
\vdots\\
\sum_{i=0}^{\kappa-1}m_i^{(N)}\sum_{j=0}^{\kappa-1-i}x^{(N-1)}_j(s^\kappa-s^{i+j})\\
\sum_{i=0}^{\kappa-1}m_i^{(1)}\sum_{j=0}^{\kappa-1-i}x^{(N)}_j(s^\kappa-s^{i+j})
\end{pmatrix}
\end{align}
and denote this system by $AB=C$. Determinant of the main matrix in \eqref{syst:NxN_m} is
$$
\det A=s^{\ka N}-G_{S_N}(s).
$$
Thus, the main matrix in \eqref{syst:NxN_m} is invertible for all such $s$ that $s^{\ka N}\neq G_{S_N}(s)$ and $B=A^{-1}C$. Therefore, the previous thoughts and equality \eqref{eq:gen_f_relation} imply

$$
\Xi(s)=\frac{G_{\mathcal{M}_1}(s)}{1-s}=\frac{1}{s^{\ka N}-G_{S_N}(s)}
\begin{pmatrix}
M_{11}&M_{21}&\ldots&M_{N1}
\end{pmatrix}
\frac{C}{1-s},
$$
where $M_{11},\,M_{21},\,\ldots,\,M_{N1}$ are the minors of $A$ and $C$ is the column vector of the right-hand-side of \eqref{syst:NxN_m}.

\end{proof}

\begin{proof}[Proof of Theorem \ref{thm:2}]
We first show that $\E S_N > \kappa N$ implies $\varphi(u)=0$ for all $u\in\mathbb{N}_0$. The recurrence \eqref{eq:rec} yields
\begin{align}\label{sum_supl} \nonumber
\f(u)&=\sum_{\substack{i_1\leqslant u+\kappa-1\\i_1+i_2\leqslant u+2\kappa-1\\
\vdots \vspace{1mm} \\
i_1+i_2+\ldots +i_{N-1}\leqslant u+\ka(N-1)-1\\
i_1+i_2+\ldots+i_N\leqslant u+\kappa N-1}}\hspace{-10mm}\mathbb{P}(X_1=i_1)\mathbb{P}(X_2=i_2)\cdots\mathbb{P}(X_N=i_N)\,
\varphi\Bigg(u+\kappa N-\sum_{j=1}^Ni_j\Bigg)\\ \nonumber
&=\sum_{i=1}^{u+\ka N}s^{(N)}_{u+\ka N -i}\f(i)\\
&\quad- \sum_{\substack{i_1\leqslant u+\kappa-1\\i_1+i_2\leqslant u+2\kappa-1\\
\vdots \vspace{1mm} \\
u+\ka (N-1)\leqslant i_1+i_2+\ldots +i_{N-1}\leqslant u+\ka N-1\\ \nonumber
i_1+i_2+\ldots+i_N\leqslant u+\kappa N-1}}\hspace{-10mm}x^{(1)}_{i_1}x^{(2)}_{i_2}\cdots x^{(N)}_{i_N}\,
\varphi\Bigg(u+\kappa N-\sum_{j=1}^Ni_j\Bigg)\\ \nonumber
&\quad\quad\vdots\\ \nonumber
&\quad- \sum_{\substack{u+\ka\leqslant i_1\leqslant u+\kappa N-1\\i_1+i_2\leqslant u+\kappa N-1\\
\vdots \vspace{1mm} \\
i_1+i_2+\ldots +i_{N-1}\leqslant u+\kappa N-1\\
i_1+i_2+\ldots+i_N\leqslant u+\kappa N-1}}\hspace{-0mm}x^{(1)}_{i_1}x^{(2)}_{i_2}\cdots x^{(N)}_{i_N}\,
\varphi\Bigg(u+\kappa N-\sum_{j=1}^Ni_j\Bigg)\\ 
&=\sum_{i=1}^{u+\ka N}s^{(N)}_{u+\ka N -i}\f(i) - \sum_{i=1}^{\ka(N-1)} \mu_i(u)\f(i),
\end{align}
where $\mu_i(u)$ for each $i\in\{1,\,2,\, \ldots,\, \ka (N-1)\}$ are coefficients consisting of the products of probability mass functions of random variables $X_1, X_2, \ldots, X_N$. For instance, if $N=2$ and $\ka=1$, then
$$
\vphi(u)=\sum_{\substack{i_1\leqslant u\\i_1+i_2\leqslant u+1}}x^{(1)}_{i_1}x^{(2)}_{i_2}\vphi(u+2-i_1-i_2)=\sum_{i=1}^{u+2}s^{(2)}_{u+2-i}\vphi(i)-x^{(1)}_{u+1}x^{(2)}_0\vphi(1).
$$
If $\mu_0(u):=s^{(N)}_{u+\ka N}$ and $\mu_j(u):=0$ when $j>\ka (N-1)$, then the equality in \eqref{sum_supl} is
    \begin{equation*}
        \f(u)=\sum_{i=0}^{u+\ka N}s^{(N)}_{u+\ka N -i}\f(i) - \sum_{i=0}^{\ka N-1} \mu_i(u)\f(i).
    \end{equation*}
    By summing up both sides of the last equality by $u$, which varies from 0 to some natural and sufficiently large $v$, we obtain
    \begin{equation}\label{eq:sum}
        \sum_{u=0}^v\f(u)=\sum_{u=0}^v\sum_{i=0}^{u+\ka N}s^{(N)}_{u+\ka N -i}\f(i) - \sum_{u=0}^v\sum_{i=0}^{\ka N-1} \mu_i(u)\f(i).
    \end{equation}
    We now change the order of summation in \eqref{eq:sum}
    \begin{equation*}
        \sum_{u=0}^v\sum_{i=0}^{u+\ka N}(\cdot)=\sum_{i=0}^{\ka N-1}\sum_{u=0}^v(\cdot)+\sum_{i=\ka N}^{v+\ka N}\sum_{u=i-\ka N}^v(\cdot)
    \end{equation*}
    and obtain
    \begin{align*}
        \sum_{u=0}^{v+\ka N}\f(u)-\sum_{u=v+1}^{v+\ka N}\f(u)
        =\sum_{i=0}^{\ka N-1}\f(i)\sum_{u=0}^v s^{(N)}_{u+\ka N -i}+\sum_{i=\ka N}^{v+\ka N}\f(i)\sum_{u=i-\ka N}^v s^{(N)}_{u+\ka N -i} - \sum_{i=0}^{\ka N-1}\f(i)\sum_{u=0}^v \mu_i(u).
    \end{align*}
    Subtracting $\sum_{i=0}^{v+\ka N}\f(i) \sum_{u=i-\ka N}^v s_{u+\ka N-i}^{(N)}$ from both sides of the last equation and rearranging, we get
    \begin{align*}
        &\sum_{i=0}^{v+\ka N}\f(i)\left(1 -\sum_{u=i-\ka N}^v s_{u+\ka N-i}^{(N)}\right)-\sum_{i=v+1}^{v+\ka N}\f(i) \\
        &=\sum_{i=0}^{\ka N-1}\f(i)\left(\sum_{u=0}^v s^{(N)}_{u+\ka N -i}-\sum_{u=0}^v \mu_i(u)\right)
        -\sum_{i=0}^{\ka N-1}\f(i) \sum_{u=i-\ka N}^v s_{u+\ka N-i}^{(N)}
    \end{align*}
   % The obtained equation can be simplified further:
    %\begin{align*}
     %   \sum_{j=0}^{v+\ka N}\f(j)\left(1 -\sum_{u=j-\ka N}^v s_{u+\ka N-j}^N\right)-\sum_{j=v+1}^{v+\ka N}\f(j)
      %  =\sum_{j=0}^{\ka N-1}\f(j)\left(-\hspace{-3mm}\sum_{u=j-\ka %N}^{-1} s^N_{u+\ka N -j}-\sum_{u=0}^{v} m_j(u)\right).
    %\end{align*}
    or
    \begin{align*}
        \sum_{i=v+1}^{v+\ka N}\f(i)-\sum_{i=0}^{v+\ka N}\f(i)\left(1 -\sum_{u=0}^{v+\ka N-i} s_u^{(N)}\right)
        =\sum_{i=0}^{\ka N-1}\f(i)\left(\sum_{u=0}^{\ka N -i-1} s^{(N)}_{u}+\sum_{u=0}^{v} \mu_i(u)\right),
    \end{align*}
    which implies
    \begin{align}\label{eq:v_to_inf}\hspace{-0.5cm}
    \sum_{i=v+1}^{v+\ka N}\f(i)-\sum_{i=0}^{v+\ka N}\f(i)\mathbb{P}(S_N>v+\ka N - i)=\sum_{i=0}^{\ka N-1}\f(i)\left(\mathbb{P}(S_N\leqslant \ka N-i-1)+\sum_{u=0}^{v} \mu_i(u)\right).
    \end{align}
    Clearly, the definition of the survival probability \eqref{ult_time_p} implies that $\f(u)$ is a non-decreasing function, i. e. $\f(u)\leqslant\f(u+1)$ for all $u\in\mathbb{N}_0$. Thus, there exists a non-negative limit $\f(\infty):=\lim_{u\to\infty}\f(u)$ and $\f(\infty)=1$ if the net profit condition $\E S_N<\ka N$ holds, see Lemma \ref{lem:lim}. We now let $v\to\infty$ in both sides of \eqref{eq:v_to_inf}. For the first sum in \eqref{eq:v_to_inf} we obtain
    \begin{equation}\label{2lim1}
        \lim_{v\to\infty}\sum_{i=v+1}^{v+\ka N}\f(i)=\lim_{v\to\infty}\left(\f(v+1)+\ldots+\f(v+\ka N)\right)=\f(\infty) \ka N,
    \end{equation}
 and for the second
    \begin{align}\label{eq:the_second}
\lim_{v\to\infty}\sum_{i=0}^{v+\ka N}\f(i)\mathbb{P}(S_N>v+\ka N - i)=\f(\infty)\E S_N.
    \end{align}
   Indeed, let us recall that $\E X=\sum_{i=0}^\infty\mathbb{P}(X>i)$, when $X$ is some non-negative and integer-valued random variable. Then, the upper bound of \eqref{eq:the_second} is
    \begin{align*}
        &\lim_{v\to\infty}\sum_{i=0}^{v+\ka N}\f(i)\mathbb{P}(S_N>v+\ka N - i)\leqslant \lim_{v\to\infty}
        \f(v+\ka N)\sum_{i=0}^{v+\ka N}\mathbb{P}(S_N>v+\ka N-i)\\
        &=\lim_{v\to\infty}\f(v+\ka N)\sum_{i=0}^{v+\ka N}\mathbb{P}(S_N>i)=\f(\infty)\E S_N,
    \end{align*}
   while the lower bound is the same due to inequality
    \begin{align*}
        &\sum_{i=0}^{v+\ka N}\f(i)\mathbb{P}(S_N>v+\ka N - i)\\
        &=\sum_{j=0}^M\f(i)\mathbb{P}(S_N>v+\ka N - i)
        +\sum_{i=M+1}^{v+\ka N}\f(i)\mathbb{P}(S_N>v+\ka N - i)\\
        &\geqslant \inf_{i\geqslant M+1}\f(i)\sum_{i=0}^{v+\ka N-M-1}\mathbb{P}(S_N>i),
    \end{align*}
where $M$ is some fixed and sufficiently large natural number. Thus, when $v\to\infty$, the equality in \eqref{eq:v_to_inf} is

\begin{align}\label{eq:after_limit}
\f(\infty)(\ka N-\E S_N)=\sum_{i=0}^{\ka N-1}\f(i)\left(\mathbb{P}(S_N\leqslant \ka N-i-1)+\sum_{u=0}^{\infty} \mu_i(u)\right).
\end{align}
If $\E S_N>\ka N$, as under the case of consideration, the non-negative right-hand-side of \eqref{eq:after_limit} implies $\f(\infty)=0$ and consequently $\f(u)=0$ for all $u\in\mathbb{N}_0$. The first case that $\E S_N>\ka N$ makes survival impossible is proved.
        
	Let us now consider the case when $\E S_N = \kappa N$ and $\mathbb{P}(S_N=\kappa N)<1$. If $\E S_N = \kappa N$ and $\mathbb{P}(S_N=\kappa N)<1$, then at least one probability $s_0^{(N)},\, s_1^{(N)},\, \ldots,\, s_{\ka N-1}^{(N)}$ is larger than zero, because otherwise $\E S_N>\ka N$.
	Then, from \eqref{eq:after_limit},
    \begin{equation}\label{eq:3.2.2}
        \sum_{i=0}^{\ka N-1}\varphi(i)\left(\sum_{u=0}^{\ka N-i-1}s_u^{(N)}+\sum_{u=0}^{\infty} \mu_i(u)\right)=0.
    \end{equation}
    If $s_0^{(N)}>0$, then \eqref{eq:3.2.2} implies $\varphi(0)=\varphi(1)=\ldots=\varphi(\ka N-1)=0$. Using recurrence \eqref{eq:rec} we can show that $\varphi(u)=0$ for all $u \in \mathbb{N}_0$.
    If $s_0^{(N)}=0$ and $s_1^{(N)}>0$, then \eqref{eq:3.2.2} implies that $\varphi(0)=\varphi(1)=\ldots=\varphi(\ka N-2)=0$ and once again, using recurrence \eqref{eq:rec} we can show that $\varphi(u)=0$ for all $u \in \mathbb{N}_0$.
    Arguing the same we proceed up to $s_0^{(N)}=s_1^{(N)}=\ldots= s_{\ka N-2}^{(N)}=0,\, s_{\ka N-1}^{(N)}>0$ and observe that in all of these cases \eqref{eq:3.2.2} and recurrence \eqref{eq:rec} yields $\varphi(u)=0$ for all $u \in \mathbb{N}_0$.
    
	Finally, let us consider the case when $\mathbb{P}(S_N=\kappa N)=1$. If $S_N=\ka N$ with probability one, the random variables $X_1,\, X_2,\, \ldots,\, X_N$ are degenerate, meaning that $X_i \equiv c_i$ for all $i \in \{1,\, 2,\, \ldots,\, N\}$, where $c_i \in \{0,\, 1,\,\ldots,\, \ka N\}$ and $c_1+c_2+\ldots+c_N=\ka N$. Thus, the model \eqref{model} becomes fully deterministic. Moreover, $W(n)=W(n+N)$ for all $n \in \mathbb{N}_0$, $N\in\mathbb{N}$, and it is sufficient to check if the lowest of values among $W(1),\,\ldots,\,W(N)$ is larger than zero.
\end{proof}

\begin{proof}[Proof of Theorem \ref{thm:fin_t}]
The proof of equalities \eqref{eq:fin_t_no_rec} and \eqref{eq:fin_t_rec} is basically the same as deriving the recurrence \eqref{eq:rec}. Equalities \eqref{Jon} and \eqref{Jon1} are implied by \cite[Thm. 1]{BBS}.
\end{proof}

\section{Numerical examples}

In this section, we illustrate the applicability of theorems formulated in Section \ref{main}. All of the necessary calculations are performed using Wolfram Mathematica \cite{Mathematica}. Notice that some of the examples considered here are also considered in \cite[Sec. 4]{AG22}, where the ultimate time survival probability was obtained by calculating the limits of certain recurrent sequences. Therefore, in some examples here we check if the obtained values of $\vphi(u)$ match the previously known ones.
    
We say that a random variable $X$ is distributed according to the {\it displaced Poisson distribution} $\mathcal{P}(\lambda, \xi)$ with parameters $\lambda>0$ and $\xi\in\mathbb{N}_0$, if
	\begin{equation*}
	    \mathbb{P}(X=m)=e^{-\lambda}\frac{\lambda^{m-\xi}}{(m-\xi)!}, \quad m=\xi, \xi+1, \ldots
	\end{equation*}
	One can check that the following facts for the displaced Poisson distribution are true:
	\begin{enumerate}
	    \item $\E X = \lambda + \xi$; \eqnum\label{8.1}
	    \item 
	    If $X\sim\mathcal{P}(\lambda_1, \xi_1)$, $Y\sim\mathcal{P}(\lambda_2, \xi_2)$ and also $X$ and $Y$ are independent, then
	    \\$X+Y\sim\mathcal{P}(\lambda_1+\lambda_2, \xi_1+\xi_2)$; \eqnum\label{8.2} 
	\item $G_X(s)=s^\xi e^{\lambda(s-1)}$.
	\end{enumerate}

	\begin{ex}\label{ex:1}
Let $\ka=2$ and consider the bi-seasonal (N=2) discrete time risk model \eqref{model} where $X_1\sim\mathcal{P}(1, 0)$ and $X_2\sim\mathcal{P}(2, 0)$.
We set up the survival probability-generating function $\Xi(s)$ and calculate $\vphi(u)$, when $u=0,\,1,\,\ldots,\,15$. 	    
\end{ex}
	
Let us observe that in the considered example the net profit condition is satisfied $\E S_2=3<4$. Solving the equation $G_{S_2}(s)=e^{3(s-1)}=s^4$ for $|s|\leqslant1$, $s\neq1$, we get $\alpha_1:=-0.3605, \alpha_2:=-0.1294+0.4087i, \alpha_3:=-0.1294-0.4087i$.
    Since all of the solutions $\al_1,\,\al_2,\,\al_3$ are simple and none of them are equal to 0, following the description beneath Theorem \ref{thm:1} in Section \ref{main}, we set up matrices $M_1, M_2$ and $G_2$:
\begin{align*}\hspace{-1cm}
M_1=
\begin{pmatrix}
x^{(2)}_1\al_1+x^{(2)}_0(\al_1+1) & x^{(2)}_0\al_1 \\
x^{(2)}_1\al_2+x^{(2)}_0(\al_2+1) & x^{(2)}_0\al_2\\
x^{(2)}_1\al_3+x^{(2)}_0(\al_3+1) & x^{(2)}_0\al_3\\
x^{(2)}_1+2x^{(2)}_0 & x^{(2)}_0
\end{pmatrix},\,
M_2=
\begin{pmatrix}
x^{(1)}_1\al_1+x^{(1)}_0(\al_1+1) & x^{(1)}_0\al_1 \\
x^{(1)}_1\al_2+x^{(1)}_0(\al_2+1) & x^{(1)}_0\al_2\\
x^{(1)}_1\al_3+x^{(1)}_0(\al_3+1) & x^{(1)}_0\al_3\\
x^{(1)}_1+2x^{(1)}_0 & x^{(1)}_0
\end{pmatrix},\,
G_2=
\begin{pmatrix}
\frac{G_{X_2}(\al_1)}{\al^2_1} & \frac{G_{X_2}(\al_1)}{\al^2_1}\\
\frac{G_{X_2}(\al_2)}{\al^2_2} & \frac{G_{X_2}(\al_2)}{\al^2_2}\\			\frac{G_{X_2}(\al_3)}{\al^2_3} & \frac{G_{X_2}(\al_3)}{\al^2_3}\\       
1 & 1
\end{pmatrix}
\end{align*}
and the system
\begin{align*}
\begin{pmatrix}
M_1 & M_2\circ G_2
\end{pmatrix}_{4\times4}
\begin{pmatrix}
m^{(1)}_0\\
m^{(1)}_1\\
m^{(2)}_0\\
m^{(2)}_1
\end{pmatrix}
=
\begin{pmatrix}
0\\
0\\
0\\
1
\end{pmatrix},
\end{align*}
which implies $m^{(1)}_0=0.6501$, $m^{(1)}_1=0.1395$, $m^{(2)}_0=0.5083$, $m^{(2)}_1=0.1855$. Then, $\varphi(1)=m^{(1)}_0=0.6501$ and $\f(2)=m^{(1)}_0+m^{(1)}_1=0.7896$. We then can use the system  \eqref{eq:thm_system} to find $m^{(1)}_2, m^{(1)}_3, \ldots\,$, and consequently $\varphi(3), \varphi(4), \ldots$ due to equality \eqref{eq:phi(n+1)}. In the case under consideration, the system \eqref{eq:thm_system} is
%when $n=2, 3,\ldots\,$, \eqref{eq:phi(n+1)} would take the %following form
\begin{align*}
\begin{cases}
&m^{(2)}_{n}=\left(m^{(1)}_{n-2}-\sum_{i=0}^{n-1}m^{(2)}_{i}x^{(1)}_{n-i}-\sum_{i=0}^{1}m^{(2)}_i\sum_{j=0}^{1-i}x^{(1)}_j\mathbbm{1}_{\{n=2\}}\right)/x^{(1)}_0\\
&m^{(1)}_{n}=\left(m^{(2)}_{n-2}-\sum_{i=0}^{n-1}m^{(1)}_{i}x^{(2)}_{n-i}-\sum_{i=0}^{1}m^{(1)}_i\sum_{j=0}^{1-i}x^{(2)}_j\mathbbm{1}_{\{n=2\}}\right)/x^{(2)}_0
\end{cases},\,n=2,\,3,\,\ldots
 \end{align*}

Having $\varphi(1)$, $\varphi(2)$, $\varphi(3)$ and $\varphi(4)$ we use the recurrence \eqref{eq:rec} in order to find $\f(0)$
\begin{align*}
&\varphi(0)=\sum_{\substack{i_1\leqslant 1\\i_1+i_2\leqslant 3}}\mathbb{P}(X_1=i_1)\mathbb{P}(X_2=i_2)\,
\varphi(4-i_1-i_2)\\&=x^{(1)}_0x^{(2)}_0\f(4)
+(x^{(1)}_0x^{(2)}_1+x^{(1)}_1x^{(2)}_0)\f(3)
+(x^{(1)}_0x^{(2)}_2+x^{(1)}_1x^{(2)}_1)\f(2)
+(x^{(1)}_0x^{(2)}_3
+x^{(1)}_1x^{(2)}_2)\f(1).
\end{align*}
Let us recall that the same recurrence \eqref{eq:rec} can be used to calculate $\f(u)$ when $u\geqslant5$.

We provide the obtained survival probabilities in Table \ref{table1}.
\begin{table}[H]
\begin{center}
\begin{tabular}{|c||c|c|c|c|c|c|c|c|c|c|c|}
\hline
$u$&0&1&2&3&4&5&10&15\\
\hline
$\varphi(u)$&0.442&0.650&0.790&0.876&0.928&0.958&0.997&1\\
\hline
\end{tabular}
\caption{Survival probabilities for $\kappa=2$, $N=2$, $X_1\sim\mathcal{P}(1, 0)$ and $X_2\sim\mathcal{P}(2, 0)$}\label{table1}
\end{center}
\end{table}
The provided values of $\f(u)$ match the ones given in \cite[Tab. 1]{AG22}, where they were obtained by a different method.

Based on Theorem \ref{thm:gen}, we now set up the survival probability-generating function $\Xi(s)$, i. e. 
$$
\frac{1}{n!}\frac{d^n}{ds^n}\left(\Xi(s)\right)\Big|_{s=0}=\f(n+1),\,n=0,\,1,\,\ldots
$$
So, having $m^{(1)}_0$, $m^{(1)}_1$, $m^{(2)}_0$, $m^{(2)}_1$ and omitting the elementary calculation, we get

$$
\Xi(s)=\frac{(0.187+0.442s)s^2+(0.0224+0.104s)e^s}{e^{3(s-1)}-s^4},\,|s|<1,\, e^{3(s-1)}\neq s^4.
$$

\begin{ex}\label{ex:2}
Let us consider the model \eqref{model} when $\ka=2$ and $X_1\sim\mathcal{P}(1, 1)$ and $X_2\sim\mathcal{P}(9/10, 1)$. We find $\f(u)$ when $u=0,\,1,\,\ldots,\,50$ and set up the survival probability-generating function $\Xi(s)$.
\end{ex}

According to \eqref{8.1} and \eqref{8.2}, we check that the net profit condition is satisfied: $\E S_2=1+1+0.9+1=3.9<4=\kappa N$. The probability-generating function of $S_2=X_1+X_2$ is
$
G_{S_2}(s)=s^2e^{1.9(s-1)}
$
and the equation $G_{S_2}(s)=s^4$ has one non-zero solution inside the unit circle: $\alpha=-0.2928$. Since $x_0^{(1)}=0$, $x_0^{(2)}=0$ we use \eqref{eq:thm_eq_mean} and \eqref{eq:thm_zero} to set up the system
\begin{align}\label{ex:syst_2x2}
\begin{pmatrix}
x^{(2)}_1\al&\frac{G_{X_2}(\alpha)}{\alpha} x^{(1)}_1\\
x^{(2)}_1&x^{(1)}_1
\end{pmatrix}
\times
\begin{pmatrix}
m^{(1)}_0\\
m^{(2)}_0
\end{pmatrix}
=
\begin{pmatrix}
0\\
0.1
\end{pmatrix}.
\end{align}
It is easy to see that system \eqref{ex:syst_2x2} can be expressed using matrices $M_1, M_2$ and $G_2$ as in the previous example, where
\begin{align*}
M_1=\begin{pmatrix}
x^{(2)}_1\al\\
x^{(2)}_1
\end{pmatrix},\,
M_2
=\begin{pmatrix}
x^{(1)}_1\al\\
x^{(1)}_1
\end{pmatrix},\,
G_2=
\begin{pmatrix}
\frac{G_{X_2}(\alpha)}{\alpha^2}\\
1
\end{pmatrix}.
\end{align*}
The system \eqref{ex:syst_2x2} implies $m^{(1)}_0=0.1270$, $m^{(2)}_0=0.1315$ and consequently, $\varphi(1)=m^{(1)}_0=0.1270$. To proceed calculating $\varphi(u)=\sum_{i=1}^{u-1}m^{(1)}_i$, $u\geqslant 2$ we use \eqref{eq:thm_system}, which in this particular case is
\begin{align*}
\begin{cases}
&m^{(2)}_n x^{(1)}_0=m^{(1)}_{n-2}-\sum_{i=0}^{n-1}m^{(2)}_ix^{(1)}_{n-i}-\sum_{i=0}^{1}m_i^{(2)}\sum_{j=0}^{1-i}x^{(1)}_j\mathbbm{1}_{\{n=2\}}\\
&m^{(1)}_n x^{(2)}_0=m^{(2)}_{n-2}-\sum_{i=0}^{n-1}m^{(1)}_ix^{(2)}_{n-i}-\sum_{i=0}^{1}m_i^{(1)}\sum_{j=0}^{1-i}x^{(2)}_j\mathbbm{1}_{\{n=2\}}
\end{cases},\,n=2,\,3,\,\ldots,
\end{align*}
or, equivalently,
\begin{align*}
\begin{cases}
&m^{(2)}_{n-1}=\left(m^{(1)}_{n-2}-\sum_{i=0}^{n-2}m^{(2)}_ix^{(1)}_{n-i}-m_0^{(2)}x^{(1)}_1\mathbbm{1}_{\{n=2\}}\right)/x^{(1)}_1\\
&m^{(1)}_{n-1}=\left(m^{(2)}_{n-2}-\sum_{i=0}^{n-2}m^{(1)}_ix^{(2)}_{n-i}-m_0^{(1)}x^{(2)}_1\mathbbm{1}_{\{n=2\}}\right)/x^{(2)}_1
\end{cases},\,n=2,\,3,\,\ldots
\end{align*}
Substituting $n=2,\, 3,\, \ldots$ into the last two equations, we obtain $m^{(1)}_1, m^{(1)}_2, \ldots$ The survival probability $\f(0)$ is found using recurrence \eqref{eq:rec}:
\begin{align*}
\varphi(0)=\sum_{\substack{i_1\leqslant 1\\i_1+i_2\leqslant 3}}\mathbb{P}(X_1=i_1)\mathbb{P}(X_2=i_2)\,
\varphi(4-i_1-i_2)=x^{(1)}_1x^{(2)}_1\f(2)+x^{(1)}_1x^{(2)}_2\f(1).
\end{align*}
    After completing all the necessary calculations, we get survival probabilities which are provided in Table \ref{table2}.
\begin{table}[H]
\centering
\begin{tabular}{|c||c|c|c|c|c|c|c|c|c|c|c|}
\hline
$u$&0&1&2&3&4&5&10&20&30&40&50  \\
\hline             $\varphi(u)$&0.048&0.127&0.209&0.286&0.355&0.417&0.649&0.873&0.954&0.983&0.994\\
\hline
\end{tabular}
\caption{Survival probabilities for $\kappa=2$, $N=2$, $X_1\sim\mathcal{P}(1, 1)$ and $X_2\sim\mathcal{P}(9/10, 1)$}\label{table2}
\end{table}
    Once again, obtained results in Table \ref{table2} match the ones presented in \cite[Tab. 3]{AG22}, where the numbers are obtained differently, i. e. calculating limits of certain recurrent sequences.
	
Theorem \ref{thm:gen} yields the following survival probability-generating function

$$
\Xi(s)=\frac{0.0516e^{s-1}+0.0484s}{e^{1.9(s-1)}-s^2},\,|s|<1,\,e^{1.9(s-1)}\neq s^2.
$$

\begin{ex}\label{ex:3}
Let us consider the bi-seasonal model \eqref{model} with $\ka=3$ where claims are represented by two independent random variables $X_1$ and $X_2$, whose distributions are given in Table \ref{table3} and Table\ref{table4}.
\begin{table}[H]
\centering
\begin{tabular}{|c||c|c|c|c|c|}
\hline
$i$&0&1&2&3&4  \\
\hline
$\mathbb{P}(X_1=i)$&0.4096&0.4096&0.1536&0.0256&0.0016\\
\hline
\end{tabular}
\caption{Probability distribution of random variable $X_1$} \label{table3}
\end{table}
\begin{table}[H]
\centering
\begin{tabular}{|c||c|c|c|c|c|}
\hline
$i$&0&1&2  \\
\hline
$\mathbb{P}(X_2=i)$&0.04&0.32&0.64\\
\hline
\end{tabular}
\caption{Probability distribution of random variable $X_2$}\label{table4}
\end{table}
We find the survival probability $\f(u)$ for all $u\in\mathbb{N}_0$ and its generating function $\Xi(s)$.
\end{ex}

It is easy to observe that $\E S_2=2.4<6$. Thus, the net profit condition is valid. Sum's $X_1+X_2$ probability-generating function is
\begin{align*}
G_{S_2}(s)=\left(0.4096+0.4096s+0.1536s^2+0.0256s^3+0.0016s^4\right)\left(0.04+0.32s+0.64s^2\right).
\end{align*}

Solving $G_{S_2}(s)=s^6$, we obtain the following roots inside the unit circle:
$$
\alpha_1=-\frac{4}{11}, \,\alpha_2=-0.2250, \,\alpha_3=-0.0154-0.7423i, \,\alpha_4=-0.0154+0.7423i.
$$
Note that the complex roots always occur in conjugate pairs due to $G_{S_N}(\overline{s})-\overline{s}^{\ka N}=\overline{G_{S_N}(s)-s^{\ka N}}$, where over-line denotes conjugate.
According to Lemma \ref{lem:roots}, there must be one root of multiplicity two and one may check that $\alpha_1$ is such. 

We then employ \eqref{eq:thm_derivative} to create the modified versions of $M_1$, $M_2$ and $G_2$. Let $\tilde{M}_1$, $\tilde{M}_2$ and $\tilde{G}_2$ be
\begin{align*}
\tilde{M}_1:=
\begin{pmatrix}
\al_1^2F_{X_2}(2)+\al_1F_{X_2}(1)+x_0^{(2)}&\al_1^2F_{X_2}(1)+\al_1x_0^{(2)}&x_0^{(2)}\alpha_1^2\\
\al_2^2F_{X_2}(2)+\al_2F_{X_2}(1)+x_0^{(2)}&\al_2^2F_{X_2}(1)+\al_2x_0^{(2)}&x_0^{(2)}\alpha_2^2\\
\al_3^2F_{X_2}(2)+\al_3F_{X_2}(1)+x_0^{(2)}&\al_3^2F_{X_2}(1)+\al_3x_0^{(2)}&x_0^{(2)}\alpha_3^2\\
\al_4^2F_{X_2}(2)+\al_4F_{X_2}(1)+x_0^{(2)}&\al_4^2F_{X_2}(1)+\al_4x_0^{(2)}&x_0^{(2)}\alpha_4^2\\
F_{X_2}(1)+2\al_1F_{X_2}(2)&x_0^{(2)}+2\al_1F_{X_2}(1)&2x^{(2)}_0\al_1\\
x_2^{(2)}+2x_1^{(2)}+3x_0^{(2)}&x_1^{(2)}+2x_0^{(2)}&x_0^{(2)}
\end{pmatrix},
\end{align*}

\begin{align*}
\tilde{M}_2:=
\begin{pmatrix}
\al_1^2F_{X_1}(2)+\al_1F_{X_1}(1)+x_0^{(1)}&\al_1^2F_{X_1}(1)+\al_1x_0^{(1)}&x_0^{(1)}\alpha_1^2\\
\al_2^2F_{X_1}(2)+\al_2F_{X_1}(1)+x_0^{(1)}&\al_2^2F_{X_1}(1)+\al_2x_0^{(1)}&x_0^{(1)}\alpha_2^2\\
\al_3^2F_{X_1}(2)+\al_3F_{X_1}(1)+x_0^{(1)}&\al_3^2F_{X_1}(1)+\al_3x_0^{(1)}&x_0^{(1)}\alpha_3^2\\
\al_4^2F_{X_1}(2)+\al_4F_{X_1}(1)+x_0^{(2)}&\al_4^2F_{X_1}(1)+\al_4x_0^{(1)}&x_0^{(1)}\alpha_4^2\\
%\begin{smallmatrix}
%x_0^1((1-\alpha_1^{-3})G'_{X_2}(\alpha_1)+3G_{X_2}%(\alpha_1)\alpha_1^{-4})
%\\
%+x_1^1((1-\alpha_1^{-2})G'_{X_2}(\alpha_1)+2G_{X_2}%(\alpha_1)\alpha_1^{-3})
%\\
%+x_2^1((1-\alpha_1^{-1})G'_{X_2}(\alpha_1)+G_{X_2}%(\alpha_1)\alpha_1^{-2})
%\end{smallmatrix}
%&\begin{smallmatrix}                                 
%x_0^1((1-\alpha_1^{-2})G'_{X_2}(\alpha_1)+2G_{X_2}%(\alpha_1)\alpha_1^{-3})
%\\ +x_1^1((1-\alpha_1^{-1})G'_{X_2}(\alpha_1)+G_{X_2}%(\alpha_1)\alpha_1^{-2})
%\end{smallmatrix}
%&\begin{smallmatrix}
%x_0^1(1-\alpha_1^{-1})G'_{X_2}(\alpha_1)\\
%+x_0^1G_{X_2}(\alpha_1)\alpha_1^{-2}
%\end{smallmatrix}\\
\tilde{M}_{5,\,1}&\tilde{M}_{5,\,2}&\tilde{M}_{5,\,3}\\
3x_0^{(1)}+2x_1^{(1)}+x_2^{(1)}&2x_0^{(1)}+x_1^{(1)}&x_0^{(1)}
\end{pmatrix},
\end{align*}

$$
\begin{pmatrix}
\tilde{M}_{5,\,1}&\tilde{M}_{5,\,2}&\tilde{M}_{5,\,3}
\end{pmatrix}
=
\begin{pmatrix}
\left(\frac{G_{X_2}(s)}{s^{2}}\right)'\Big|_{s=\al_1}
&\left(\frac{G_{X_2}(s)}{s}\right)'\Big|_{s=\al_1}
&G'_{X_2}(s)\Big|_{s=\al_1}
\end{pmatrix}
\begin{pmatrix}
x_0^{(1)}&0&0\\
F_{X_1}(1)&x_0^{(1)}&0\\
F_{X_1}(2)&F_{X_1}(1)&x_0^{(1)}
\end{pmatrix},
$$

\begin{align*}
\tilde{G}_2:=
\begin{pmatrix}
{G_{X_2}(\al_1)}/{\al_1^3}&{G_{X_2}(\al_1)}/{\al_1^3}&{G_{X_2}(\al_1)}/{\al_1^3}\\
{G_{X_2}(\al_2)}/{\al_2^3}&{G_{X_2}(\al_2)}/{\al_2^3}&{G_{X_2}(\al_2)}/{\al_2^3}\\
{G_{X_2}(\al_3)}/{\al_3^3}&{G_{X_2}(\al_3)}/{\al_3^3}&{G_{X_2}(\al_3)}/{\al_3^3}\\
{G_{X_2}(\al_4)}/{\al_4^3}&{G_{X_2}(\al_4)}/{\al_4^3}&{G_{X_2}(\al_4)}/{\al_4^3}\\
1&1&1\\
1&1&1
\end{pmatrix}.
\end{align*}

Then,
        \begin{equation*}
\begin{pmatrix}
\tilde{M}_1&\tilde{M}_2\circ \tilde{G}_2
\end{pmatrix}_{6\times6}
\begin{pmatrix}
m^{(1)}_0\\m^{(1)}_1\\m^{(1)}_2\\
m^{(2)}_0\\m^{(2)}_1\\m^{(2)}_2
\end{pmatrix}
=
\begin{pmatrix}
0\\0\\0\\0\\0\\3.6
\end{pmatrix}
\quad \Rightarrow \quad
\begin{pmatrix}
    			m^{(1)}_0\\m^{(1)}_1\\m^{(1)}_2\\
    			m^{(2)}_0\\m^{(2)}_1\\m^{(2)}_2
    		\end{pmatrix}
    		=
    		\begin{pmatrix}
    			0.9984\\0.0016\\0\\
    			1\\0\\0
    		\end{pmatrix}
\end{equation*}

        It follows that $\varphi(1)=m_0^{(1)}=0.9984$, $\varphi(2)=m_0^{(1)}+m_1^{(1)}=1$ and consequently $\vphi(u)=1$ for all $u\geqslant3$. Consequently, 
        
\begin{align*}
\varphi(0)&=\sum_{\substack{i_1\leqslant 2\\i_1+i_2\leqslant 5}}\mathbb{P}(X_1=i_1)\mathbb{P}(X_2=i_2)\,\varphi(6-i_1-i_2)\\
&
=x^{(1)}_0x^{(2)}_0\f(6)
+\left(x^{(1)}_0x^{(2)}_1+x^{(1)}_1x^{(2)}_0\right)\f(5)
+\left(x^{(1)}_0x^{(2)}_2
+x^{(1)}_1x^{(2)}_1+x^{(1)}_2x^{(2)}_0\right)\f(4)
\\
&\quad
+\left(x^{(1)}_2x^{(2)}_1+x^{(1)}_1x^{(2)}_2\right)\f(3)
+x^{(1)}_2x^{(2)}_2\f(2)
=0.9728.
\end{align*}
        
The correctness of these results can be verified in the following way. If initial surplus $u=1$, ruin can only occur at the first moment of time and only if $1+3\cdot1-X_1\leqslant0$, i. e. $X_1=4$. Thus, $\f(1)=1-\T(X_1=4)=1-0.0016=0.9984$. If initial surplus $u\geqslant 1$, then ruin will never occur. There are two reasons for that. First of all, in the first moment of time insurer's wealth will never drop below one. Moreover, every two periods insurer earns $6$ units of currency and that is the maximum amount of claims that the insurer can suffer during two consecutive periods. The result of $\f(0)$ is also logical as with no initial capital ruin can occur only if $X_1=3$ or $X_1=4$, thus $\vphi(0)=1-\T(X_1=3)-\T(X_1=4)=1-0.0256-0.0016=0.9728$.

The generating function of $\f(1),\,\f(2),\,\ldots$ in the considered case is simple
$$
\Xi(s)=0.9984+s+s^2+s^3+\ldots=\frac{1}{1-s}-0.0016,\,|s|<1.
$$
One may verify that Theorem \ref{thm:gen} produces the same result.

\begin{ex}\label{ex:4}
In the last example, we consider ten season model with a premium rate of $5$, i. e. $N=10$, $\kappa=5$, and we assume claims to be generated by independent random variables $X_k\sim\mathcal{P}(k/(k+1)+4,\,0)$, $k\in\{1,\, 2,\,\ldots,\, 10\}$, where $\mathcal{P}(\lambda,\,0)$ denotes Poisson distribution with parameter $\lambda$. We calculate both the finite time survival probability $\f(u, T)$ and the ultimate time survival probability $\f(u)$ and provide a frame of ultimate time survival probability-generating function $\Xi(s)$.
\end{ex}

Let us verify that the net profit condition is satisfied:
\begin{equation*}
\E S_{10}=\sum_{i=1}^{10}\E X_k=\sum_{k=1}^{10}\left(\frac{k}{k+1}+4\right)=\frac{1330009}{27720}\approx47.9801<50.
\end{equation*}

We now apply Theorem \ref{thm:1}. The equation
\begin{equation*}
G_{S_{10}}(s)=e^{1330009(s-1)/27720}=s^{50}
\end{equation*}
has $49$ simple roots inside the unit circle depicted in Figure \ref{fig:roots}.

\begin{figure}[H]
\begin{center}
\includegraphics[scale=0.75]{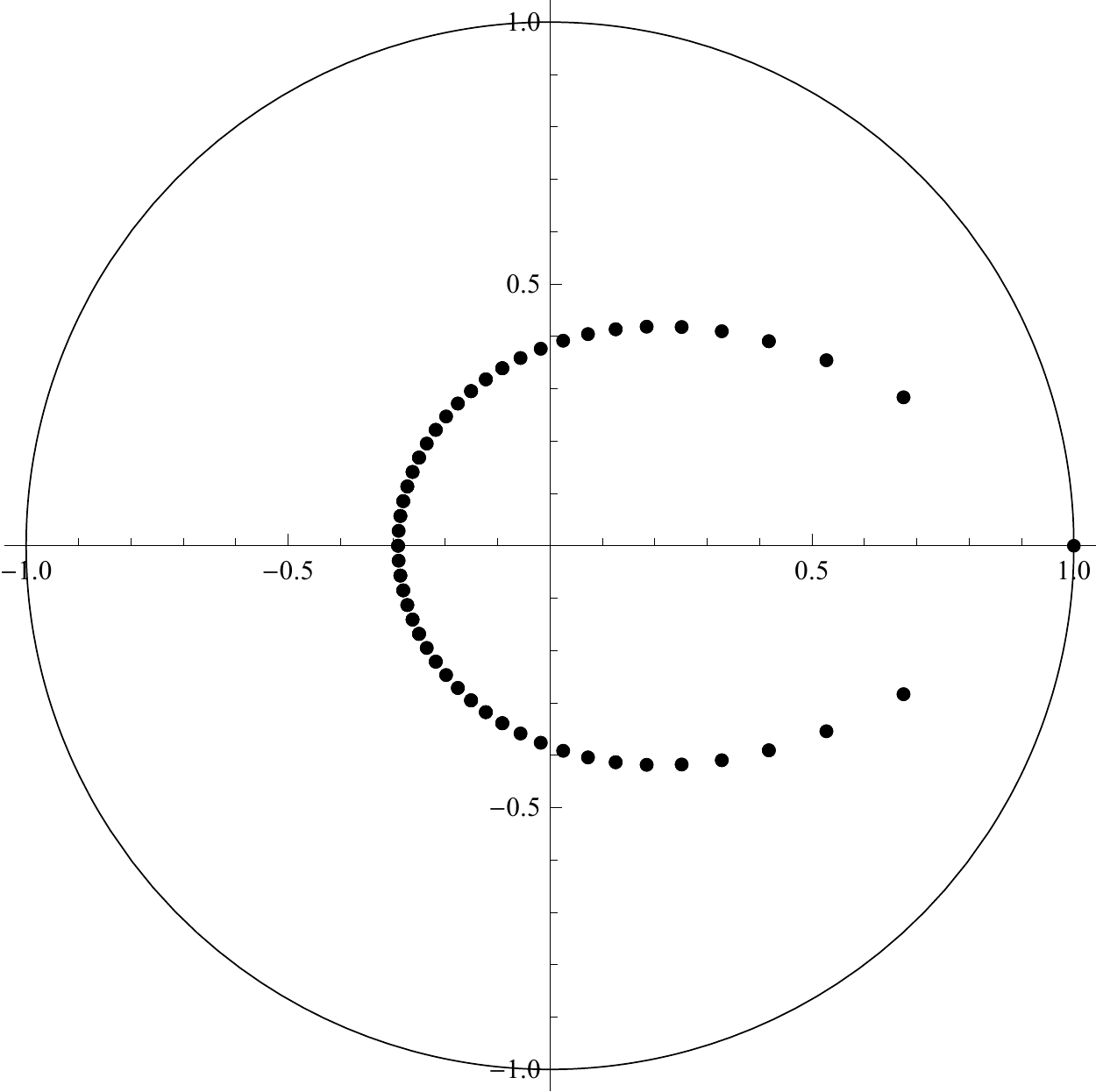}
\caption{Roots of $s^{50}=G_{S_{10}}(s)$, when $X_k\sim\mathcal{P}(k/(k+1)+4,\,0)$, $k\in\{1,\, 2,\,\ldots,\, 10\}$.}\label{fig:roots}
\end{center}
\end{figure}

Denoting these roots by $\al_1,\, \al_2,\, \ldots,\, \al_{49}$
we set up matrices $M_1$, $M_2$, $\ldots\,$, $M_{10}$ and $G_2$, $G_3$, $\ldots\,$, $G_{10}$:
\begin{align*}
&M_1=
\begin{pmatrix}
\sum\limits_{j=0}^{4}\al_1^jF_{X_{10}}(j) & \sum\limits_{j=1}^{4}\al_1^jF_{X_{10}}(j-1)&\ldots&x^{(10)}_0\al^4_1\\
\vdots&\vdots&\ddots&\vdots\\
\sum\limits_{j=0}^{4}\al_{49}^jF_{X_{10}}(j) & \sum\limits_{j=1}^{4}\al_{49}^jF_{X_{10}}(j-1)&\ldots&x^{(10)}_0\al^4_{49}\\
\sum\limits_{j=0}^{4}x^{(10)}_j(5-j) & \sum\limits_{j=0}^{3}x^{(10)}_j(5-j-1) & \ldots&x^{(10)}_0
\end{pmatrix},
\end{align*}
\begin{align*}
&M_2=
\begin{pmatrix}
\sum\limits_{j=0}^{4}\al_1^jF_{X_{1}}(j) & \sum\limits_{j=1}^{4}\al_1^jF_{X_{1}}(j-1)&\ldots&x^{(1)}_0\al^4_1\\
\vdots&\vdots&\ddots&\vdots\\
\sum\limits_{j=0}^{4}\al_{49}^jF_{X_{1}}(j) & \sum\limits_{j=1}^{4}\al_{49}^jF_{X_{1}}(j-1)&\ldots&x^{(1)}_0\al^4_{49}\\
\sum\limits_{j=0}^{4}x^{(1)}_j(5-j) & \sum\limits_{j=0}^{3}x^{(1)}_j(5-j-1) & \ldots&x^{(1)}_0
\end{pmatrix},\,\ldots
\end{align*}

\begin{align*}
 &M_{10}=
\begin{pmatrix}
\sum\limits_{j=0}^{4}\al_1^jF_{X_{9}}(j) & \sum\limits_{j=1}^{4}\al_1^jF_{X_{9}}(j-1)&\ldots&x^{(9)}_0\al^4_1\\
\vdots&\vdots&\ddots&\vdots\\
\sum\limits_{j=0}^{4}\al_{49}^jF_{X_{9}}(j) & \sum\limits_{j=1}^{4}\al_{49}^jF_{X_{9}}(j-1)&\ldots&x^{(9)}_0\al^4_{49}\\
\sum\limits_{j=0}^{4}x^{9}_j(5-j) & \sum\limits_{j=0}^{3}x^{9}_j(5-j-1) & \ldots&x^{9}_0
\end{pmatrix},
\end{align*}	
\begin{align*}
G_2=
\begin{pmatrix}
\frac{G_{X_{10}}(\al_1)}{\al^5_1}
&\ldots& \frac{G_{X_{10}}(\al_1)}{\al^5_1}\\
\vdots&\ddots&\vdots\\
\frac{G_{X_{10}}(\al_{49})}{\al^5_{49}}
& \ldots &\frac{G_{X_{10}}(\al_{49})}{\al^5_{49}}\\
1 &\ldots& 1
\end{pmatrix},\,
G_3=
\begin{pmatrix}
\frac{G_{X_{10}+X_1}(\al_1)}{\al^{10}_1}
&\ldots& \frac{G_{X_{10}+X_1}(\al_1)}{\al^{10}_1}\\
\vdots&\ddots&\vdots\\
\frac{G_{X_{10}+X_1}(\al_{49})}{\al^{10}_{49}}
& \ldots &\frac{G_{X_{10}+X_1}(\al_{49})}{\al^{10}_{49}}\\
1 &\ldots& 1
\end{pmatrix},\,\ldots
\end{align*}
\begin{align*}
G_{10}=
\begin{pmatrix}
\frac{G_{X_{10}+X_1+\ldots+X_8}(\al_1)}{\al^{45}_1}
&\ldots& \frac{G_{X_{10}+X_1+\ldots+X_8}(\al_1)}{\al^{45}_1}\\
\vdots&\ddots&\vdots\\
\frac{G_{X_{10}+X_1+\ldots+X_8}(\al_{49})}{\al^{45}_{49}}
& \ldots &			\frac{G_{X_{10}+X_1+\ldots+X_8}(\al_{49})}{\al^{45}_{49}}\\
1 &\ldots& 1
\end{pmatrix}.
\end{align*}	
Solving the system
\begin{align}\label{syst:ex_50}
\begin{pmatrix}
M_1 & M_2\circ G_2 & \ldots & M_{10}\circ G_{10}
\end{pmatrix}_{50\times50}
\begin{pmatrix}
m^{(1)}_0\\
m^{(1)}_1\\
\vdots\\
m^{(1)}_4\\
\vdots\\
m^{(10)}_0\\
m^{(10)}_1\\
\vdots\\
m^{(10)}_4\\			
\end{pmatrix}_{50\times1}
=
\begin{pmatrix}
0\\
0\\
\vdots\\
0\\
\frac{55991}{27720}
\end{pmatrix}_{50\times1}		
\end{align}
we obtain $m^{(1)}_0=0.1821$, $m^{(1)}_1=0.0604$, $m^{(1)}_2=0.0583$, $m^{(1)}_3=0.0545$, $m^{(1)}_4=0.0504$. 

Therefore, using \eqref{eq:phi(n+1)}:
\begin{align*}
\f(1)&=m^{(1)}_0=0.1821,\\ 
\f(2)&=m^{(1)}_0+m^{(1)}_1=0.2425,\\
\f(3)&=m^{(1)}_0+m^{(1)}_1+m^{(1)}_2=0.3009,\\
\f(4)&=m^{(1)}_0+m^{(1)}_1+m^{(1)}_2+m^{(1)}_3=0.3554,\\
\f(5)&=m^{(1)}_0+m^{(1)}_1+m^{(1)}_2+m^{(1)}_3+m^{(1)}_4=0.4058.
\end{align*}

Employing system \eqref{eq:thm_system} we find the remaining probabilities $m^{(1)}_5, m^{(1)}_6, \ldots$:
\begin{align*}
\begin{cases}
&m^{(2)}_{n}=\left(m^{(1)}_{n-5}-\sum_{i=0}^{n-1}m^{(2)}_{i}x^{(1)}_{n-i}-\sum_{i=0}^{4}m^{(2)}_i\sum_{j=0}^{4-i}x^{(1)}_j\mathbbm{1}_{\{n=5\}}\right)/x^{(1)}_0\\
&\hspace{0,9cm}\vdots\\
&m^{(10)}_{n}=\left(m^{(9)}_{n-5}-\sum_{i=0}^{n-1}m^{(10)}_{i}x^{(9)}_{n-i}-\sum_{i=0}^{4}m^{(10)}_i\sum_{j=0}^{4-i}x^{(9)}_j\mathbbm{1}_{\{n=5\}}\right)/x^{(9)}_0\\
&m^{(1)}_{n}=\left(m^{(10)}_{n-5}-\sum_{i=0}^{n-1}m^{(1)}_{i}x^{(10)}_{n-i}-\sum_{i=0}^{4}m^{(1)}_i\sum_{j=0}^{4-i}x^{(10)}_j\mathbbm{1}_{\{n=5\}}\right)/x^{(10)}_0
\end{cases},\,n=5,\, 6,\,\ldots\,
\end{align*}
We substitute the obtained probabilities $m^{(1)}_0,\,m^{(1)}_1,\,\ldots$ into \eqref{eq:phi(n+1)} and calculate $\varphi(6), \varphi(7), \ldots$
Finally, $\f(0)$ can be found using \eqref{eq:rec}:
\begin{equation*}
\varphi(0)=\sum_{\substack{i_1\leqslant 4\\i_1+i_2\leqslant 9\\
i_1+i_2+i_3\leqslant 14\\ \vdots\\i_1+i_2+\ldots+i_{10}\leqslant 49}}\mathbb{P}(X_1=i_1)\mathbb{P}(X_2=i_2)\cdots\mathbb{P}(X_{10}=i_{10})
\varphi\left(50-\sum_{j=1}^{10}i_j\right).
\end{equation*}

The final results, including the finite time ruin probability calculated by Theorem \ref{thm:fin_t}, rounded up to three decimal places, are provided in Table \ref{table5}.

%\begin{table}[H]
%\begin{center}
%\begin{tabular}{|c||c|c|c|c|c|c|c}
%\hline
%$u$&1&2&3&4&5\\
%\hline
%$\varphi(u)$&0.182&0.243&0.301&0.355&0.406\\
%\hline
%\hline
%$u$&6&7&8&9&10\\
%\hline
%$\varphi(u)$&0.452&0.495&0.535&0.571&0.605\\
%\hline
%\hline
%$u$&20&30&40&50&92\\     
%\hline
%$\varphi(u)$&0.826&0.923&0.966&0.985&1\\
%\hline
%\end{tabular}
%\caption{Survival probabilities for $\kappa=5$, $N=10$, $X_k\sim\mathcal{P}%(k/(k+1)+4,\,0)$, $k\in\{1, 2,\ldots, 10\}$}\label{table5}
%\end{center}
%\end{table}

\begin{table}[H]
\centering
\begin{tabular}{|c|c|c|c|c|c|c|c|c|c|}
\hline
{$T$}&{$u=0$}&{$u=1$}&{$u=2$}&{$u=3$}&{$u=4$}&{$u=5$}&{$u=10$}&{$u=20$}&{$u=30$}\\
\hline
$1$&$0.532$&$0.703$&$0.831$&$0.913$&$0.960$&$0.983$&$1$&$1$&$1$\\
$2$&$0.424$&$0.587$&$0.727$&$0.831$&$0.902$&$0,946$&$0.999$&$1$&$1$\\
$3$&$0.368$&$0.520$&$0.657$&$0.767$&$0.849$&$0.906$&$0.995$&$1$&$1$\\
$4$&$0.332$&$0.474$&$0.606$&$0.717$&$0.804$&$0.869$&$0.988$&$1$&$1$\\
$5$&$0.306$&$0.440$&$0.567$&$0.677$&$0.766$&$0.834$&$0.979$&$1$&$1$\\
$10$&$0.235$&$0.343$&$0.450$&$0.548$&$0.635$&$0.708$&$0.921$&$0.998$&$1$\\
$20$&$0.200$&$0.294$&$0.389$&$0.478$&$0.558$&$0.629$&$0.863$&$0.990$&$1$\\
$30$&$0.179$&$0.264$&$0.350$&$0.432$&$0.507$&$0.575$&$0.814$&$0.979$&$0.999$\\
\hline
$\infty$&$0.125$&$0.182$&$0.243$&$0.301$&$0.355$&$0.406$&$0.605$&$0.826$&$0.923$\\
\hline
\end{tabular}
\caption{Survival probabilities for $\kappa=5$, $N=10$, $X_k\sim\mathcal{P}(k/(k+1)+4,\,0)$, $k\in\{1, 2,\ldots, 10\}$}\label{table5}
\end{table}

The survival probability $\f(1),\,\f(2),\,\ldots$ generating function, having $m_i^{(j)}$,  $i=0,\,1,\,\ldots,\,4$, $j=1,\,2,\,\ldots,\,10$ from system \eqref{syst:ex_50}, when $|s|<1$ and $e^{a_{10}(s-1)}\neq s^{50}$  is

$$
\Xi(s)=\frac{\mathbbm{u}^T\mathbbm{v}}{e^{a_{10}(s-1)}-s^{50}},\,\mathbbm{u}=
\begin{pmatrix}
s^{45}\\
s^{40}e^{a_1(s-1)}\\
s^{35}e^{a_2(s-1)}\\
\vdots\\
s^5e^{a_8(s-1)}\\
e^{a_9(s-1)}
\end{pmatrix},\,
\mathbbm{v}=
\begin{pmatrix}
e^{-\la_1}\sum_{i=0}^{4}m_i^{(2)}\sum_{j=i}^{4}s^j\sum_{l=0}^{j-i}\la_1^j/l!\\
e^{-\la_2}\sum_{i=0}^{4}m_i^{(3)}\sum_{j=i}^{4}s^j\sum_{l=0}^{j-i}\la_2^j/l!\\
\vdots\\
e^{-\la_9}\sum_{i=0}^{4}m_i^{(10)}\sum_{j=i}^{4}s^j\sum_{l=0}^{j-i}\la_9^j/l!\\
e^{-\la_{10}}\sum_{i=0}^{4}m_i^{(1)}\sum_{j=i}^{4}s^j\sum_{l=0}^{j-i}\la_{10}^j/l!
\end{pmatrix},
$$
where $a_n=4n+\sum_{k=0}^{n}k/(k+1)$ and $\la_n=4+n/(n+1)$ when $n=1,\,2,\,\ldots,\,10$.

\typeout{}
\bibliography{mybibfile}

\begin{thebibliography}{10}
\expandafter\ifx\csname url\endcsname\relax
  \def\url#1{\texttt{#1}}\fi
\expandafter\ifx\csname urlprefix\endcsname\relax\def\urlprefix{URL }\fi
\expandafter\ifx\csname href\endcsname\relax
  \def\href#1#2{#2} \def\path#1{#1}\fi

\bibitem{Case-Shiller}
K.~E. Case, R.~J. Shiller, The efficiency of the market for single-family
  homes, The American Economic Review 79~(1) (1989) 125--137.

\bibitem{RW_H}
B.~G. Malkiel, A Random Walk Down Wall Street. The Best Investment Tactic for
  the New Century, Norton \& Company, 2011.

\bibitem{ARGUIN2022174}
L.~P. Arguin, L.~Hartung, N.~Kistler, High points of a random model of the
  {R}iemann-zeta function and {G}aussian multiplicative chaos, Stochastic
  Processes and their Applications 151 (2022) 174--190.
\newblock \href {https://doi.org/10.1016/j.spa.2022.04.017}
  {\path{doi:10.1016/j.spa.2022.04.017}}.

\bibitem{edelman_rao_2005}
A.~Edelman, N.~R. Rao, Random matrix theory, Acta Numerica 14 (2005) 233–297.
\newblock \href {https://doi.org/10.1017/S0962492904000236}
  {\path{doi:10.1017/S0962492904000236}}.

\bibitem{martinsson_tropp_2020}
P.~G. Martinsson, J.~A. Tropp, Randomized numerical linear algebra: Foundations
  and algorithms, Acta Numerica 29 (2020) 403–572.
\newblock \href {https://doi.org/10.1017/S0962492920000021}
  {\path{doi:10.1017/S0962492920000021}}.

\bibitem{Mathematica}
W.~R. Inc., \href{https://www.wolfram.com/mathematica}{Mathematica online,
  {V}ersion 13.1}, {C}hampaign, IL, 2022.
\newline\urlprefix\url{https://www.wolfram.com/mathematica}

\bibitem{Andersen}
E.~Andersen, On the collective theory of risk in case of contagion between the
  claims, Trans. Xvth Int. Actuar. 2 (1957) 219 -- 229.

\bibitem{Lundberg}
F.~Lundberg, {I.} {A}pproximerad framstallning af sannolikhetsfunktionen: {II.}
  {A}terforsakring af kollektivrisker (1903).

\bibitem{Pearson}
K.~Pearson, The problem of the random walk, Nature 72~(294) (1905).
\newblock \href {https://doi.org/10.1038/072294b0}
  {\path{doi:10.1038/072294b0}}.

\bibitem{Gerber}
H.~Gerber, Mathematical fun with the compound binomial process, ASTIN Bulletin
  18~(2) (1988) 161–168.
\newblock \href {https://doi.org/10.2143/AST.18.2.2014949}
  {\path{doi:10.2143/AST.18.2.2014949}}.

\bibitem{Gerber1}
H.~Gerber, Mathematical fun with ruin theory, Insurance: Mathematics and
  Economics 7~(1) (1988) 15--23.
\newblock \href {https://doi.org/10.1016/0167-6687(88)90091-1}
  {\path{doi:10.1016/0167-6687(88)90091-1}}.

\bibitem{Shiu}
E.~Shiu, Calculation of the probability of eventual ruin by beekman's
  convolution series, Insurance: Mathematics and Economics 7~(1) (1988) 41--47.
\newblock \href {https://doi.org/10.1016/0167-6687(88)90095-9}
  {\path{doi:10.1016/0167-6687(88)90095-9}}.

\bibitem{Shiu1}
E.~Shiu, Ruin probability by operational calculus, Insurance: Mathematics and
  Economics 8~(3) (1989) 243--249.
\newblock \href {https://doi.org/10.1016/0167-6687(89)90060-7}
  {\path{doi:10.1016/0167-6687(89)90060-7}}.

\bibitem{Feller}
W.~Feller, An Introduction to Probability Theory and Its Applications, Vol.~2,
  Wiley, 1971, 2'nd. edition.

\bibitem{Spitzer}
F.~Spitzer, A combinatorial lemma and its application to probability theory,
  Transactions of the American Mathematical Society 82 (1956) 323--339.

\bibitem{Spitzer1}
F.~Spitzer, Principles of Random Walk, Graduate texts in mathematics, Springer:
  Berlin/Heidelberg, Germany, 1988.

\bibitem{Kendal}
D.~G. Kendall, {The genealogy of genealogy branching processes before (and
  after) 1873}, Bulletin of the London Mathematical Society 7~(3) (1975)
  225--253.
\newblock \href {https://doi.org/10.1112/blms/7.3.225}
  {\path{doi:10.1112/blms/7.3.225}}.

\bibitem{Pollaczek}
F.~Pollaczek, Order statistics of partial sums of mutually independent random
  variables, Journal of Applied Probability 12~(2) (1975) 390--395.
\newblock \href {https://doi.org/10.2307/3212456} {\path{doi:10.2307/3212456}}.

\bibitem{Li}
S.~Li, Y.~Lu, J.~Garrido, A review of discrete-time risk models, Rev. R. Acad.
  Cien. Serie A. Mat. 103 103 (2009) 321--337.
\newblock \href {https://doi.org/10.1007/BF03191910}
  {\path{doi:10.1007/BF03191910}}.

\bibitem{dembo}
A.~Dembo, Y.~Peres, J.~Rosen, O.~Zeitouni, Thick points for planar brownian
  motion and the {E}rd{\H{o}}s-{T}aylor conjecture on random walk, Acta
  Mathematica 186~(2) (2001) 239--270.
\newblock \href {https://doi.org/10.1007/BF02401841}
  {\path{doi:10.1007/BF02401841}}.

\bibitem{rvz}
A.~M. Raducan, R.~Vernic, G.~Zbaganu, Recursive calculation of ruin
  probabilities at or before claim instants for non-identically distributed
  claims, ASTIN Bulletin 45~(2) (2015) 421–443.
\newblock \href {https://doi.org/10.1017/asb.2014.30}
  {\path{doi:10.1017/asb.2014.30}}.

\bibitem{asmussen}
S.~Asmussen, H.~Albrecher, Ruin Probabilities (2nd Edition), Advanced Series On
  Statistical Science And Applied Probability, World Scientific Publishing
  Company, 2010.

\bibitem{Dickson}
D.~C.~M. Dickson, On numerical evaluation of finite time survival
  probabilities, British Actuarial Journal 5~(3) (1999) 575--584.
\newblock \href {https://doi.org/10.1017/S135732170000057X}
  {\path{doi:10.1017/S135732170000057X}}.

\bibitem{Scandin}
A.~Castañer, M.~M. Claramunt, M.~Gathy, C.~Lefèvre, M.~Mármol, Ruin problems
  for a discrete time risk model with non-homogeneous conditions, Scandinavian
  Actuarial Journal 2013~(2) (2013) 83--102.
\newblock \href {https://doi.org/10.1080/03461238.2010.546144}
  {\path{doi:10.1080/03461238.2010.546144}}.

\bibitem{Picard}
P.~Picard, C.~Lefèvre, Probabilité de ruine Éventuelle dans un modèle de
  risque à temps discret, Journal of Applied Probability 40~(3) (2003)
  543--556.
\newblock \href {https://doi.org/10.2307/3215935} {\path{doi:10.2307/3215935}}.

\bibitem{Li1}
S.~Li, F.~Huang, C.~Jin, Joint distributions of some ruin related quantities in
  the compound binomial risk model, Stochastic Models 29~(4) (2013) 518--539.
\newblock \href {https://doi.org/10.1080/15326349.2013.847610}
  {\path{doi:10.1080/15326349.2013.847610}}.

\bibitem{Lef}
C.~Lefèvre, M.~Simon, Schur-constant and related dependence models, with
  application to ruin probabilities, Methodology and Computing in Applied
  Probability 23 (2021) 317--339.
\newblock \href {https://doi.org/10.1007/s11009-019-09744-2}
  {\path{doi:10.1007/s11009-019-09744-2}}.

\bibitem{Yang}
Y.~Miao, K.~P. Sendova, B.~L. Jones, On a risk model with dual seasonalities,
  North American Actuarial Journal 0~(0) (2022) 1--19.
\newblock \href {https://doi.org/10.1080/10920277.2022.2068611}
  {\path{doi:10.1080/10920277.2022.2068611}}.

\bibitem{Cang}
Y.~Cang, Y.~Yang, X.~Shi, A note on the uniform asymptotic behavior of the
  finite-time ruin probability in a nonstandard renewal risk model, Lith Math J
  60 (2020) 161--172.
\newblock \href {https://doi.org/10.1007/s10986-020-09473-x}
  {\path{doi:10.1007/s10986-020-09473-x}}.

\bibitem{horn_johnson}
R.~A. Horn, C.~R. Johnson, Matrix Analysis, Cambridge University Press, 2012.
\newblock \href {https://doi.org/10.1017/CBO9780511810817}
  {\path{doi:10.1017/CBO9780511810817}}.

\bibitem{Grig}
A.~Grigutis, Distribution of shifted discrete random walk and {V}andermonde
  matrices, submitted (2022).
\newblock \href {https://doi.org/10.48550/ARXIV.2208.04091}
  {\path{doi:10.48550/ARXIV.2208.04091}}.

\bibitem{GJS}
A.~Grigutis, J.~Jankauskas, J.~Šiaulys, Multi seasonal discrete time risk
  model revisited, submitted (2022).
\newblock \href {https://doi.org/10.48550/ARXIV.2207.03196}
  {\path{doi:10.48550/ARXIV.2207.03196}}.

\bibitem{GJ}
A.~Grigutis, J.~Jankauskas, On $2\times2$ determinants originating from
  survival probabilities in homogeneous discrete time risk model, Results Math
  77~(204) (2022).
\newblock \href {https://doi.org/10.1007/s00025-022-01736-y}
  {\path{doi:10.1007/s00025-022-01736-y}}.

\bibitem{BBS}
K.~Bla\v{z}evi\v{c}ius, E.~Bieliauskien\.{e}, J.~\v{S}iaulys, Finite-time ruin
  probability in the inhomogenous claim case, Lith. Math. J. 50 (2010) 260 --
  270.
\newblock \href {https://doi.org/10.1007/s10986-010-9084-2}
  {\path{doi:10.1007/s10986-010-9084-2}}.

\bibitem{AG22}
A.~Alencenovič, A.~Grigutis, Bi-seasonal discrete time risk model with income
  rate two, Communications in Statistics - Theory and Methods 0~(0) (2022)
  1--18.
\newblock \href {https://doi.org/10.1080/03610926.2022.2026962}
  {\path{doi:10.1080/03610926.2022.2026962}}.

\end{thebibliography}

\end{document}